\newcommand{\be}{\begin{equation*}}
\newcommand{\ee}{\end{equation*}}
\newcommand{\ben}[1]{\begin{equation}\label{#1}}
\newcommand{\een}{\end{equation}}
\newcommand{\bea}{\begin{eqnarray}}
\newcommand{\eea}{\end{eqnarray}}
\newcommand{\bean}{\begin{eqnarray*}}
\newcommand{\eean}{\end{eqnarray*}}
\newcommand{\R}{\mathbb{R}}
\newcommand{\C}{\mathbb{C}}
\newcommand{\p}{\partial}
\newcommand{\N}{\mathbb{N}}
\newcommand{\cc}{\subset \subset}
\newcommand{\abs}[1]{\left|#1 \right|} 
\newcommand{\norm}[2]{\left|\left |#1 \right| \right |_{#2}} 
\newcommand{\snorm}[2]{\left[ #1 \right ]_{#2}} 
\newcommand{\ssnorm}[2]{\left \llbracket #1 \right\rrbracket_{#2}}
\newcommand{\eq}[1]{(\ref{#1})}
\newtheorem{Theorem}{Theorem}
\newtheorem{Proposition}{Proposition}
\newtheorem{Lemma}{Lemma}
\newtheorem{Corollary}[Theorem]{Corollary}
\newtheorem{Definition}{Definition}
\numberwithin{Theorem}{section}
\numberwithin{Lemma}{section}
\title[Model Problem for QNMs]{A model problem for quasinormal ringdown of asymptotically flat or extremal black holes}
\author{Dejan Gajic${}^{{*}}$, Claude Warnick${}^\dagger$}
\thanks{$^{*}$\texttt{d.gajic@dpmms.cam.ac.uk; ${}^\dagger$c.m.warnick@maths.cam.ac.uk.} \\
\phantom{1 } Centre for Mathematical Sciences, Wilberforce Road, Cambridge, CB3 0WA, UK}
\begin{document}
\maketitle
\begin{abstract}
We consider a wave equation with a potential on the half-line as a model problem for wave propagation close to an extremal horizon, or the asymptotically flat end of a black hole spacetime. We propose a definition of quasinormal frequencies (QNFs) as eigenvalues of the generator of time translations for a null foliation, acting on an appropriate (Gevrey based) Hilbert space. We show that this QNF spectrum is discrete in a subset of $\C$ which includes the region $\{\Re{s} >-b, \abs{\Im{s}}> K\}$ for any $b>0$ and some $K=K(b) \gg 1$. As a corollary we establish the meromorphicity of the scattering resolvent in a sector $\abs{\arg{s}} <\varphi_0$ for some $\varphi_0 > \frac{2\pi}{3}$, and show that the poles occur only at quasinormal frequencies according to our definition. Finally, we show that QNFs computed by the continued fraction method of Leaver are necessarily QNFs according to our new definition. This paper is a companion to [D. Gajic, C. Warnick, \href{https://arxiv.org/abs/1910.08481}{arXiv:1910.08481}] which deals with the QNFs of the wave equation on the extremal Reissner N\"ordstrom black hole.
\end{abstract}

\section{Introduction}

Consider the following wave equation for $t \geq 0$ and $r$ taking values in the half-line $\R_{>1}:=(1, \infty)$:
\ben{we}
-\frac{1}{4} \frac{\p^2 \psi}{\p \tau^2} + \frac{\p^2 \psi}{\p r^2} - V(r) \psi = 0.
\een
The factor of $\frac{1}{4}$ is chosen for convenience and can be removed by a change of coordinates. We impose the Dirichlet boundary condition $\psi(1, \tau) = 0$. The potential $V\geq 0$ will be assumed to be smooth on $\R_{\geq1}:=[1, \infty)$, and to satisfy some decay conditions as $r \to \infty$, which we will specify shortly. We can think of this as a convenient model problem for the types of wave equation that arise in studying black hole perturbations, after restricting to fixed angular frequency. The end at `$r=\infty$' corresponds in this setting to a black hole horizon or alternatively to an asymptotic end, and the nature of this end is characterised by the asymptotic behaviour of $V$. Loosely, we shall consider two possibilities: type I potentials which have exponential fall-off and admit an asymptotic expansion in powers of $e^{-\kappa r}$ near infinity, and type II potentials which fall off like $r^{-2}$ near infinity and admit an asymptotic expansion in powers of $r^{-1}$. The barrier at $r=1$ is artificial, but permits us to restrict attention to one horizon/asymptotic end at a time.

We shall briefly describe the definition of quasinormal modes for type I potentials, which arise when `$r=\infty$' corresponds to a non-degenerate Killing horizon, such as a non-extremal black hole horizon or a de Sitter horizon. This definition follows from the work of \cite{Vasy2013, CW, Gannot}. It is the definition of quasinormal modes for type II potentials which occupies the bulk of the paper, and which is relevant for the situation of asymptotically flat or extremal black holes. This paper treats the model problem in detail, and is a companion to \cite{CD} which studies the extremal Reissner-Nordstrom black hole, and in particular includes results concerning the full three-dimensional problem (rather than the symmetry reduced one-dimensional problem). For a full review of the literature in the context of asymptotically flat and extremal black holes, we refer the reader to \cite{CD}.

To understand the late-time behaviour of solutions to \eq{we}, it is useful to consider the Laplace transformed operator:
\[
\hat{L}_sw := \frac{d^2w}{dr^2} - \left(\frac{s^2}{4} + V(r)\right)w.
\]
Standard theory gives that $\hat{L}_s : H^2\cap H^1_0(\R_{>1}) \to L^2(\R_{>1})$ is invertible, and moreover $\hat{L}_s^{-1}:L^2(\R_{>1}) \to H^2(\R_{>1})$ is a holomorphic family of operators on the half-plane $\{\Re(s)>0\}$.

Let us for a moment assume that $V$ has support in $r<R$. Then $\hat{L}_s^{-1}$ can be explicitly constructed by Green's function methods, and it is possible to show that the operator may be meromorphically continued to $\C$, as an operator with modified domain and range $\hat{L}_s^{-1}:L^2_c(\R_{>1}) \to H^2_{loc.}(\R_{>1})$. This extension is known as the scattering resolvent. The late time behaviour of solutions of \eq{we} is closely related to the singularity structure of this continuation. In particular, the location of the poles of the scattering resolvent (known as the scattering resonances) encode information about `ringdown' behaviour.

The scattering resonances occur precisely at those values of $s$ for which there exists a resonant state, that is a solution $w$ to $\hat{L}_s w =0$ which satisfies the Dirichlet condition at $r=1$ and is outgoing in the sense that $w(r) = A e^{-\frac{s}{2} r}$ for $r>R$, where $A\in \C$ is a constant.  The resonant states corresponding to each scattering resonance give rise to time-harmonic solutions, $\psi(r, \tau) = e^{s \tau} w(r)$, to $\eq{we}$, and the late time behaviour of a general solution can be approximated as a sum over such time harmonic solutions (see \cite{DyatlovZworski} \S2.3).

If we relax the assumption that $V$ has compact support, then defining the scattering resonances becomes more difficult. Establishing the necessary analyticity properties to perform the meromorphic continuation requires some work, and relatedly the `outgoing' boundary condition becomes rather subtle. Even in the case of compactly supported $V$, we note that for a scattering resonance, the corresponding resonant state $w$ grows exponentially as $r \to \infty$. These issues can be resolved through the method of complex scaling (known in numerical settings as the perfectly matched layer method) at least if the potential is assumed to be real analytic outside some compact region (see \cite{DyatlovZworski}, \S2.7 and references therein). We shall consider an alternative approach, which has the benefit of requiring a weaker assumption than analyticity. We mention, however, \cite{GalZwo} which appeared subsequent to this paper appearing as a preprint and which is discussed below, which addresses potentials which fail to be analytic in a similar fashion, and \emph{does} make use of complex scaling.

\subsection{Type I Potentials} We will shortly give a precise definition, but for now, we say that $V$ is type I if there exist $\kappa >0$ and constants $V_n \in \C$ such that 
\[
V(r) \sim \sum_{k=1}^\infty V_ne^{- \kappa k r}\qquad \textrm{ as }r \to \infty.
\]
Such asymptotic behaviour models the situation where `$r=\infty$' corresponds to a non-degenerate Killing horizon of surface gravity $\kappa$, or an asymptotic end of an even asymptotically hyperbolic geometry (see \cite[Chapter 5]{DyatlovZworski}). For type I potentials, the existence of a meromorphic continuation for the scattering resolvant $\hat{L}_s^{-1}:L^2_c(\R_{>1}) \to H^2_{loc.}(\R_{>1})$ can be established in various ways, but for our purposes the most relevant are the approach of \cite{Vasy2013, CW, Gannot}. One way to understand these approaches is to make use of a different set of coordinates for the original time dependent equation \eq{we}. For an alternative, more detailed, and considerably more general exposition see \cite[\S5.2]{DyatlovZworski}. In our model problem we make the change of coordinates
\[
t =\tau - \frac{r}{2} , \qquad x = e^{- \kappa r}.
\]
The equation becomes:
\ben{nullwe1}
\frac{\p}{\p x} \left( \kappa x \frac{\p \psi}{\p x}\right) + \frac{\p^2 \psi}{\p x \p t} - W \psi = 0,
\een
where $W(x) := \frac{1}{\kappa x} V\left(- \frac{1}{\kappa} \log x\right)$. This motivates:
\begin{Definition}
$V:\R_{\geq 1} \to \R$ is a type I potential if there exists $\kappa >0$ such that the function $W:(0, e^{-\kappa}] \to \R$ given by $W(x) := \frac{1}{\kappa x} V\left(- \frac{1}{\kappa} \log x\right)$ extends to a smooth function on $[0, e^{-\kappa}]$.
\end{Definition}

Defining a Laplace transformed operator with respect to the $t$ variable, we wish to consider:
\[
\tilde{L}_su := \frac{d}{d x} \left( \kappa x \frac{d u }{d x}\right) + s \frac{du }{d x} - Wu.
\]
The crucial insight of  \cite{Vasy2013, CW, Gannot} is that the operator $\tilde{L}_s$ is Fredholm in the half-plane $\{\Re{s} > -\kappa \left( k-\frac{1}{2}\right) \}$, when $\tilde{L}_s$ is understood as a closed unbounded operator on $H^{k-1}(J)$, where $J = (0, e^{-\kappa})$. More concretely, let $\tilde{\mathcal{D}}^k$ be the completion of  $\{ u \in C^\infty(\overline{J}) | u(e^{-\kappa})=0\}$ with respect to the graph norm
\[
\norm{u}{k} = \norm{u}{H^{k-1}(J)} + \norm{\tilde{L}_s u}{H^{k-1}(J)}.
\]
Then for $\Re{s} > -\kappa \left( k-\frac{1}{2}\right)$, we have that $\tilde{\mathcal{D}}^k \subset H^{k}(J)$ is independent of $s$ and $\tilde{L}_s: \tilde{\mathcal{D}}^k \to H^{k-1}(J)$ is an analytic family of Fredholm operators. This can be established (in the language of \cite{CW}) by making use of energy estimates for the time dependent problem, and in particular exploiting the redshift effect at the non-degenerate Killing horizon $x=0$. See, for example, \cite[\S1.4]{CW} which discusses the result for a closely related operator.

From this result, it follows that $\tilde{L}_s^{-1}:C^\infty(\overline{J}) \to C^\infty(\overline{J})$ is a meromorphic family of operators, whose poles correspond to functions $u \in C^\infty(\overline{J})$  which satisfy $u(e^{-\kappa}) = 0$ and $\tilde{L}_s u = 0$.  As a direct corollary, it can be shown that $\hat{L}_s^{-1}:L^2_c(\R_{>1}) \to H^2_{loc.,}(\R_{>1})$ extends meromorphically from $\{\Re(s)>0\}$ to $\C$, with each pole located at a pole of $\tilde{L}_s^{-1}:C^\infty(\overline{J}) \to C^\infty(\overline{J})$.  To each pole of $\hat{L}_s^{-1}$ is associated a resonant state, that is a solution to $\hat{L}_sw = 0$ which satisfies the Dirichlet condition at $r=1$ and is `outgoing' in the sense that $w(r) = e^{-\frac{s}{2} r} u(e^{- \kappa r})$ where $u \in C^\infty(\overline{J})$. By Taylor's theorem applied at $x=0$ we see that this implies the existence of $w_n \in \C$ such that 
\ben{og1}
w(r) \sim e^{-\frac{s}{2} r}\sum_{k=0}^\infty w_ne^{- \kappa k r}\qquad \textrm{ as }r \to \infty,
\een
We note that this condition effectively picks one of the two asymptotic branches of the general solution to $\hat{L}_s w = 0$ near $r = \infty$. In particular, the solution with leading order behaviour $e^{\frac{s}{2} r}$ near $r = \infty$ will not satisfy the `outgoing' condition (we ignore here the points $s \in -\kappa \N$, which give rise to some additional subtleties).

We have, furthermore, gained something in our interpretation of the scattering resonances. We can show that solving \eq{nullwe1} naturally gives rise to a $C^0-$semigroup $\mathcal{S}(t): H^{k}(J)\to H^{k}(J)$, whose generator is given by:
\[
\mathcal{A} u = \int_x^1\left[   \p_\xi(\kappa \xi \p_\xi u)- W(\xi) u(\xi)\right]d \xi
\]
so that $\mathcal{S}(t) = e^{\mathcal{A} t}$. The domain of $\mathcal{A}$ is precisely $\tilde{\mathcal{D}}^k$ and moreover $u\in \tilde{\mathcal{D}}^k$ is an eigenfunction of $\mathcal{A}$ with eigenvalue $s$ if and only if $\tilde{L}_s u = 0$.  We say $s$ belongs to the $H^k-$quasinormal spectrum, $s \in \Lambda_{QNF}^k$, if  $\Re{s} > -\kappa \left( k-\frac{1}{2}\right)$ and $s$ is an eigenvalue of $\mathcal{A}: \tilde{\mathcal{D}}^k \to H^{k}(J)$. We have $\Lambda_{QNF}^{k+1} \cap \{\Re{s} > -\kappa \left( k-\frac{1}{2}\right)\} = \Lambda_{QNF}^{k}$, and we define the quasinormal spectrum, or the set of quasinormal frequencies, to be $\Lambda_{QNF}= \cup_{k} \Lambda_{QNF}^{k}$.

The scattering resonances (defined as poles of a meromorphic continuation of $\hat{L}_s$) are a subset of the quasinormal frequencies (defined as eigenvalues of $\mathcal{A}$). Generically one expects the two sets to coincide, however there are situations where the scattering resonances are a strict subset of the quasinormal frequencies \cite[\S6]{CW} (in this example, the discrepancy arises at the set $-\kappa\N$).

\subsection{Type II potentials}

Having briefly set out the situation for Type I potentials, we move on to the novel results of this paper, which concern a class of potentials which decay polynomially in $r$. We shall give a broader definition of our class of potentials in the sequel, but for now it suffices to consider $V$ of the form:
\[
V(r) = \frac{1}{r^2} \sum_{k=0}^p \frac{V_k}{r^k} 
\]
for some $V_k \in \R$ and $p \in \N$. These potentials model the situation where `$r=\infty$' is either an extremal black hole (with vanishing surface gravity) or else an asymptotically flat end. For these potentials, we shall establish:
\begin{Theorem}\label{Thm1}
The resolvent $\hat{L}_s^{-1}:\left ( \frac{\p^2}{\p r^2} - V(r)-\frac{s^2}{4}\right)^{-1}:L^2(\R_{> 1})  \to  H^2(\R_{> 1})$, which is holomorphic for $\Re(s)>0$, admits a meromorphic extension as an operator from $L^2_c(\R_{> 1})$ to $H^2_{loc.}(\R_{> 1})$  for $s$ in the sector $\{z:\abs{\arg{z}} < \varphi_0\}$ for some $\varphi_0 >\frac{2\pi}{3}$. To each pole is associated a finite number of solutions to the homogeneous problem, which are outgoing in a precise sense (in view of the fact this is an ODE problem, this number will be one). The location of the poles are the scattering resonances and the corresponding solutions the resonant states.
\end{Theorem}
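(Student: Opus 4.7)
My plan is to adapt the Type~I template to the more delicate Type~II asymptotics. First I introduce null coordinates adapted to the asymptotically flat/extremal end, taking $t = \tau - r/2$ and $x = 1/r$, so that $x=0$ corresponds to $r=\infty$. Rewriting \eq{we} in $(t,x)$ and then Laplace transforming in $t$ produces an operator $\tilde{L}_s$ in $x$ whose principal part degenerates at $x=0$: in contrast to the Type~I case where one obtains a regular singular point of the form $\kappa x \p_x$, here one expects a higher-order degeneracy associated with an irregular singular point, reflecting the vanishing of the surface gravity.

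Next I would introduce a scale of Gevrey-based Hilbert spaces on an interval $(0,x_0]$, in which the norm penalises derivatives of $u$ near $x=0$ with weights of Gevrey type whose index is tuned to the formal power series structure of $V$ (smooth but only asymptotically expandable in $1/r$). The key analytic step is then to establish, for $s$ in the sector $\{|\arg s|<\varphi_0\}$ with $\varphi_0>\tfrac{2\pi}{3}$, that $\tilde{L}_s$ defines a Fredholm operator of index zero between such a space and an appropriate target, with $\tilde{L}_s$ depending holomorphically on $s$. The sector opening is expected to emerge from Borel--Laplace considerations: the formal power series for an outgoing solution near $x=0$ is Gevrey of a specific order, and $\varphi_0$ is determined by the maximal half-angle over which this series can be resummed consistently with the Gevrey norm.

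Once the Fredholm/holomorphic structure is in place, the analytic Fredholm theorem immediately yields a meromorphic inverse $\tilde{L}_s^{-1}$ on the sector, whose poles coincide with the $s$ at which $\tilde{L}_s$ has nontrivial kernel in the Gevrey space (subject to the Dirichlet condition at $x=e^{-\kappa}$ replaced by its analogue at $r=1$). Translating back through the coordinate change gives the meromorphic extension of $\hat{L}_s^{-1}:L^2_c(\R_{>1}) \to H^2_{loc.}(\R_{>1})$ asserted in Theorem~\ref{Thm1}. The outgoing nature of the corresponding resonant state $w$ is then encoded precisely by the fact that $w$ pulls back to a function in the Gevrey Hilbert space on $[0,x_0]$; since the ODE $\hat{L}_s w=0$ has a two-dimensional solution space and only the subdominant branch admits a Gevrey-regular representation at $x=0$, imposing this together with the Dirichlet condition at $r=1$ produces a one-dimensional family of resonant states at each pole.

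The main obstacle will be the Fredholm estimate in step two. The Type~I proofs (as in \cite{Vasy2013, CW, Gannot}) crucially exploit the red-shift effect, i.e.\ the strictly positive contribution of $\kappa x \p_x$; in the Type~II setting this contribution vanishes to leading order, so coercivity has to be recovered from a more subtle interplay between the weighted Gevrey norm, the shift term $s\p_x u$, and commutators with the Gevrey multipliers. Controlling these commutators uniformly in the sector $\{|\arg s|<\varphi_0\}$, and verifying that the sector can indeed be taken to satisfy $\varphi_0>\tfrac{2\pi}{3}$, is where the bulk of the technical work will lie.
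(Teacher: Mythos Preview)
Your overall architecture matches the paper's: change to null coordinates $(t,x)=(\tau-r/2,\,1/r)$, Laplace transform to obtain $\mathcal{L}_s u = (x^2 u')' + s u' - Wu$ on $(0,1]$, build $L^2$-based Gevrey spaces $X^\sigma, Y^\sigma$, prove a Fredholm alternative for $\mathcal{L}_s:\mathcal{D}^\sigma\to Y^\sigma$ on a domain $\Omega_\sigma$, apply analytic Fredholm, and then transfer back to $\hat{L}_s$. Two points of your sketch diverge from what the paper actually does, and one of them is a genuine gap.

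\emph{The closing mechanism for the Fredholm estimate.} You anticipate Borel--Laplace resummation and commutators with Gevrey multipliers; the paper does something rather different. It reintroduces a fictitious surface gravity $\kappa>0$, so that the principal part becomes $(\kappa x + x^2)\p_x^2$, and proves estimates uniform in $\kappa$ before passing to $\kappa\to 0$. Two complementary multiplier identities are used on the $n$-times differentiated equation: a weighted ``$r^p$-type'' estimate with multiplier $(1+nx/\sigma)\overline{\p_x u^{(n)}}$, and a second estimate obtained by multiplying the rearranged equation $s\p_x u^{(n)} = \cdots$ by $\overline{s\,\p_x u^{(n)}}$. The first controls $x$-weighted norms by the unweighted one; the second controls the unweighted norm by $x$-weighted ones. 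The region $\Omega_\sigma$ (and hence $\varphi_0$) is determined purely by the algebraic conditions under which these two estimates close against each other after summing with Gevrey weights $\sigma^{2n}/(n!(n+1)!)^2$---no Borel--Laplace input is needed. This is not a gap in your plan, but your expectation of where $\varphi_0$ comes from is off.

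\emph{The transfer back to $\hat{L}_s$.} Here there is a missing step. Under the conjugation $\hat{L}_s = Q_s^{-1}\mathcal{L}_s P_s$, an element of $L^2_c(\R_{>1})$ is mapped by $Q_s$ to a function in $L^2(I)$ supported away from $x=0$, but \emph{not} to an element of $Y^\sigma$ (the Gevrey condition fails in general). So the meromorphic inverse $\mathcal{L}_s^{-1}:Y^\sigma\to X^\sigma$ furnished by the Fredholm theory does not directly apply to the transformed data. The paper closes this with a separate argument: for $f$ supported in $[\delta,1]$ one first proves an a priori bound $\|\mathcal{L}_s^{-1} f\|_{H^2(I)}\le C_\delta \|f\|_{L^2(I)}$ (using interior elliptic regularity on $[\delta,1]$ together with a rescaling argument on $[0,\delta]$), and then extends $\mathcal{L}_s^{-1}\circ\chi$ to $L^2(I)\to H^2(I)$ by density and Morera's theorem. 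Without this step your ``translating back through the coordinate change'' does not go through.
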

Before we discuss the proof of this result, let us make a few observations. Firstly, unlike in the case of Type I potentials, we only establish meromorphicity in a sector. This has subsequently been improved \cite{GalZwo} to give meromorphicity on $\C \setminus (-\infty, 0]$. Secondly, as part of our proof we define quasinormal frequencies as eigenvalues of the generator of time evolution for \eq{we} on a suitable Hilbert space, with respect to a null foliation, and show that the scattering resonances are quasinormal frequencies in this sense. Finally we are able to show that the definition of Leaver \cite{Leaver, Leaver2}, as used in many practical computations, is consistent with our definition.

As discussed above for the Type I potentials, it will be convenient to change to new coordinates $(x, t)$ such that the lines $t=\textrm{const.}$ are outgoing null rays. More concretely, we introduce:
\[
t = \tau - \frac{r}{2}, \qquad x = \frac{1}{r}.
\]
with respect to these coordinates, the equation becomes:
\ben{nullwe2}
\frac{\p}{\p x} \left( x^2 \frac{\p \psi}{\p x}\right) + \frac{\p^2 \psi}{\p x \p t} - W \psi = 0,
\een
where $W(x) = \frac{1}{x^2} V\left(\frac{1}{x} \right)$ is the transformed potential, which under our assumptions is a polynomial in $x$. After Laplace transforming in the variable $t$, we are left to consider the operator
\[
\mathcal{L}_su := \frac{d}{d x} \left( x^2 \frac{d u}{d x}\right) + s\frac{d u}{d x} - W u,
\]
on the interval $I = (0,1)$, where $u$ is assumed to satisfy Dirichlet boundary conditions at $x=1$. We wish to investigate the Fredholm properties of this operator, understood as an unbounded operator acting on a suitable Hilbert space. A key challenge in this approach will be to identify \emph{which} Hilbert spaces we should consider. 

In order to study $\mathcal{L}_s$, it will be convenient to first consider a simpler, regularised family of operators, so we consider the solvability of the family of singular ODE problems:
\begin{align}
{L}_{s, \kappa} u:= \frac{d}{dx}\left( (\kappa x + x^2) \frac{d u}{dx} \right) + s \frac{du}{dx} &= f, \label{degODE}
\end{align}
where $f \in C^\infty(\overline{I})$ is given, and $u(1)=0$. It will be convenient to denote $\mathcal{L}^0_s:=L_{s, 0}$. For $\kappa >0$, we can apply the same methods as we considered in the discussion of Type I potentials to deduce that  ${L}_{s, \kappa}: \tilde{\mathcal{D}}^k \to H^{k-1}(I)$ is a holomorphic family of Fredholm  operators for $s \in \{ z \in \mathbb{C}| \Re(z)>-\kappa(k-\frac{1}{2})\}=:\tilde{\Omega}_k$. The set of points $s\in \tilde{\Omega}_k$ at which $L_{s, \kappa}$ has non-trivial kernel are discrete, and are independent of $k$. Moreover, for $s \in \tilde{\Omega}_k$, we have $\textrm{Ker }L_{s, \kappa} \subset C^\infty(\overline{I})$. At each point $s \in \C\setminus \overline{\tilde{\Omega}_k}$, the operator ${L}_{s, \kappa}: \tilde{\mathcal{D}}^k \to H^{k-1}(I)$ has non-trivial kernel, and hence is not invertible.

It is clear from these observations that as $\kappa \to 0$, the region on which $L_{s, \kappa}$ is `nicely' invertible over $H^k$ becomes smaller and smaller. If we wish to retain any solvability in the left half-plane as $\kappa \to 0$, we must work with smooth functions. We note that for any $\kappa >0$,  and any $f \in C^\infty(\overline{I})$, a smooth solution to \eq{degODE} exists, except possibly at a discrete set of $s \in \C$ for which a non-trivial smooth solution to the homogeneous problem exists. One might hope that this statement continues to hold when $\kappa =0$, however we quickly note the following obstruction. The function $w_s(x) = e^{\frac{s}{x}} - e^s$ satisfies $\mathcal{L}^0_s w_s=0$, $u(1)=0$. For $\Re(s)<0$, $w_s(x)$ is smooth at $x=0$. Hence $\mathcal{L}^0_s$ cannot be invertible as an operator $C^\infty(\overline{I}) \to C^\infty(\overline{I})$ when $\Re(s)<0$. As a consequence we need to work with some function space which is more restrictive than $C^\infty(\overline{I})$.

One obvious way to exclude $w_s$ from the domain of $\mathcal{L}^0_s$ is to work with real analytic functions. However, aside from any aesthetic objections, this is too strong a restriction on the domain. To see this, we consider the modest goal of solving $\mathcal{L}^0_s u = 1$. Assuming a smooth solution exists, by differentiating the equation we can iteratively determine the derivatives of $u$ at $x=0$ and we find:
\ben{expansion}
u^{(n)}(0) = -\left(-\frac{1}{s}\right)^{n} n! (n-1)!
\een
for $n \geq 1$. In particular, this implies that a smooth solution $u$ to $\mathcal{L}_s u =1$ cannot have a convergent Taylor series about $x=0$ and hence cannot be real analytic. It does, however, strongly suggest that the correct regularity we should expect for $u$ is (related to) \emph{$(\sigma, 2)$-Gevrey} regularity for some $\sigma >0$. We recall:
\begin{Definition}
A function $u \in C^\infty(\overline{I})$ is $(\sigma, k)$-Gevrey regular with $k, \sigma>0$ if there exists $C>0$ such that $\sup_{x \in I} \abs{u^{(n)}(x)} \leq C \sigma^{-n} (n!)^k$ for all $n$. 
\end{Definition}
The Gevrey spaces provide a scale of spaces between $C^\infty$ and the real analytic functions. We shall not require many of their properties, but it will be convenient to note that for $k>1$ the $(\sigma, k)-$space includes bump functions and is dense in $C^\infty$.

Returning to \eq{expansion}, we see that the derivatives of $u$ at $x=0$ are consistent with:
\[
\sup_{n \in \N} \sup_{x \in I} \frac{\sigma^n}{n!^2} \abs{ u^{(n)}(x)} <\infty,
\]
for some $\sigma$, which certainly must satisfy $\sigma < \abs{s}$. On the other hand, one may establish (see Lemma \ref{expLem}) that for any $\sigma > - \Re(s)$ we have:
\[
\sup_{n \in \N} \sup_{x \in I} \frac{\sigma^n}{n!^2} \abs{ w_s^{(n)}(x)} = \infty.
\]
Combining these two facts, we can be hopeful that $\mathcal{L}^0_s$ is invertible provided the domain is defined by a suitable Gevrey-like condition, at least in a region where $\abs{s} \gg -\Re(s)$, and indeed this is the case. Having established this result, we then show that $\mathcal{L}_s$ is a compact perturbation of  $\mathcal{L}^0_s$, provided that $W$ is $(\sigma', 2)-$Gevrey regular for some $\sigma'>\sigma$. In particular this includes the polynomial potentials above, but motivates the more general:
\begin{Definition}
$V:\R_{\geq 1} \to \R$ is a type II${}_{\sigma'}$ potential if there exists $\sigma' >0$ such that the function $W:(0, 1] \to \R$ given by $W(x) := \frac{1}{x^2} V\left(\frac{1}{x} \right)$ extends to a $(\sigma', 2)-$Gevrey regular function on $[0, 1]$.
\end{Definition}
This implies in particular that:
\[
V(r) \sim \frac{1}{r^2} \sum_{k=0}^\infty \frac{V_k}{r^k} \qquad \textrm{ as } r \to \infty,
\]
for some $V_k \in \C$ with $\abs{V_k} \lesssim (\sigma')^{-k} k!$. Our main result for type II potentials is the following:
\begin{Proposition}\label{mainprop}
Suppose $V$ is a type II${}_{\sigma'}$ potential and $0<\sigma<\sigma'$. There exist $L^2-$based Gevrey spaces $X^\sigma \subset Y^\sigma$ such that the closed operator $\mathcal{L}_s: \mathcal{D}^{\sigma} \subset X^\sigma \to Y^\sigma$ is Fredholm with index $0$ for $s$ belonging to the domain $\Omega_\sigma$ illustrated in Figure \ref{OmFig}. On this domain the map $\mathcal{L}_s^{-1} : Y^\sigma \to X^\sigma$ is meromorphic, and the residues at the poles are finite rank operators. The location of the poles do not depend on $\sigma$. To each pole is associated a finite dimensional space of solutions $u \in \mathcal{D}^\sigma$ to the homogeneous problem $\mathcal{L}_s u = 0$. We call such $u$ quasinormal modes and the corresponding $s$ quasinormal frequencies.
\end{Proposition}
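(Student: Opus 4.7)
The plan has three stages: introduce $L^2$-based Gevrey Hilbert spaces $X^\sigma \subset Y^\sigma$ tailored to the a priori expansion \eqref{expansion}; invert the model operator $\mathcal{L}_s^0$ (without potential) on these spaces via differentiated energy estimates; and then treat $\mathcal{L}_s = \mathcal{L}_s^0 - W$ as a compact perturbation of the invertible model and apply analytic Fredholm theory. Concretely, I would take $X^\sigma$ to be the completion of $\{u \in C^\infty(\overline{I}) : u(1)=0\}$ in a norm comparable to
\[
\norm{u}{X^\sigma}^2 = \sum_{n=0}^\infty \frac{\sigma^{2n}}{(n!)^4}\, \norm{u^{(n)}}{L^2(I)}^2,
\]
and $Y^\sigma$ as the analogous but weaker space obtained by a shift of weights to match the mapping order of $\mathcal{L}_s$ near the singular point $x=0$. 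The domain $\mathcal{D}^\sigma$ is the subspace of $X^\sigma$ on which the graph norm is finite. As motivated by \eqref{expansion} and Lemma \ref{expLem}, this choice excludes the obstructing family $w_s(x) = e^{s/x} - e^s$ precisely in the regime $\sigma > -\Re(s)$, which is exactly the condition encoded by $\Omega_\sigma$ in Figure \ref{OmFig}.

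For the model operator I would differentiate $\mathcal{L}_s^0 u = f$ $n$ times to obtain
\[
x^2 u^{(n+2)} + (2(n+1)x + s)\, u^{(n+1)} + n(n+1)\, u^{(n)} = f^{(n)},
\]
pair with a suitable multiplier of the form $\sigma^{2n}(n!)^{-4} m_n(x)\, \overline{u^{(n+1)}}$ (with $m_n$ chosen so that boundary terms at $x=1$ have the right sign under the Dirichlet condition, while those at $x=0$ are absorbed using the integrability of $u$), and sum in $n$. For $s \in \Omega_\sigma$, the Gevrey weights $\sigma^{2n}/(n!)^4$ are precisely calibrated to balance the coefficient $n(n+1)$ produced by the Euler-type principal part $(x^2 u')'$, while $s\, u^{(n+1)}$ provides the coercivity needed to absorb the cross-terms uniformly in $n$. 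The outcome is a coercive estimate $\norm{u}{X^\sigma} \lesssim \norm{f}{Y^\sigma}$, and a parallel bound on the adjoint gives invertibility of $\mathcal{L}_s^0 : \mathcal{D}^\sigma \to Y^\sigma$ throughout $\Omega_\sigma$.

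The perturbation step is softer: since $W$ is $(\sigma', 2)$-Gevrey with $\sigma' > \sigma$, multiplication by $W$ sends $X^\sigma$ continuously into a strictly stronger Gevrey space, whose embedding in $Y^\sigma$ is compact, so $-W \cdot$ is compact $X^\sigma \to Y^\sigma$. Hence $\mathcal{L}_s = \mathcal{L}_s^0 - W$ is a holomorphic family of Fredholm operators of index $0$ on $\Omega_\sigma$, and analytic Fredholm theory yields a meromorphic inverse $\mathcal{L}_s^{-1}: Y^\sigma \to X^\sigma$ with finite-rank residues and finite-dimensional kernels at the poles. Independence of the pole locations from $\sigma$ follows because any element of $\ker \mathcal{L}_s$ satisfies the same kind of recursion as in \eqref{expansion} and hence automatically belongs to $X^{\tilde\sigma}$ for every admissible $\tilde\sigma < \sigma'$. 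The main obstacle, where I expect the bulk of the technical effort to lie, is the Gevrey energy estimate for $\mathcal{L}_s^0$: arranging the differentiated identities so that at each order $n$ the terms $\sigma^{2n}(n!)^{-4} n(n+1)$ and the $s$-dependent coercivity combine to dominate every error, thereby pinning down both the necessity of the Gevrey exponent $2$ and the precise shape of $\Omega_\sigma$ illustrated in Figure \ref{OmFig}.
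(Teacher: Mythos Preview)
Your overall architecture matches the paper's: Gevrey-based Hilbert spaces, treat the model operator $\mathcal{L}^0_s$ first, regard $-W$ as a compact perturbation, then invoke analytic Fredholm theory. But the heart of the argument, the model estimate, is described too optimistically and in one place incorrectly. The region $\Omega_\sigma$ is \emph{not} the half-plane $\{\sigma>-\Re(s)\}$. Lemma~\ref{expLem} gives only a sufficient condition for $w_s$ to fail the Gevrey bound; the sharp threshold (quoted in the introduction from Galkowski--Zworski) is $\sigma>(|s|-|\Im s|)/2$, and the region on which the paper's estimates actually close is yet another set $\Omega_\sigma=\Omega^1_\sigma\cap(\Omega^2_\sigma\cup\Omega^3_\sigma)$ whose boundary behaves like $\sigma|\Im s|\sim\Re(s)^2/4$ as $|\Im s|\to\infty$. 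A single multiplier identity of the kind you sketch will not produce this. The paper needs a \emph{pair} of complementary estimates (Lemma~\ref{GMest}/Theorem~\ref{MorThm} and Theorem~\ref{SecEst}): one bounds $x$-weighted Gevrey seminorms by the unweighted one with a small constant, the other uses the equation to bound the unweighted seminorm back by the weighted ones, and it is this feedback that determines the shape of $\Omega_\sigma$. Correspondingly the paper's $X^\sigma$ is not just your unweighted sum $\sum\sigma^{2n}(n!)^{-4}\|u^{(n)}\|_{L^2}^2$; it also carries the weighted pieces $\snorm{\cdot}{\sigma,1,0}$, $\snorm{\cdot}{\sigma,2,0}$ and a boundary seminorm $\ssnorm{\cdot}{\sigma}$, all of which are essential to the closing mechanism.

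Two further departures from your outline. First, the paper does not invert $\mathcal{L}^0_s$ itself: the closed estimate (Theorem~\ref{ClosedEst}) controls only derivatives of order $\ge N$ with $N$ large depending on $\Upsilon\Subset\Omega_\sigma$, so the operator is modified by a compact $\mathcal{K}=-N(N+1)+\lambda\,(\text{finite rank boundary term})$, and it is $(\mathcal{L}^0_s+\mathcal{K})^{-1}$ that is shown to be holomorphic. Second, existence is not obtained by an adjoint estimate as you propose but by a regularisation: one works with $L_{s,\kappa}$ for $\kappa>0$ (where Fredholmness is already available on Sobolev spaces), proves the Gevrey estimates uniformly in $\kappa$, and passes to the limit $\kappa\to0$ (Theorem~\ref{L0Thm}). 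Finally, your $\sigma$-independence argument via the Taylor recursion is not the paper's; the paper instead compares $\dim\ker$ and $\dim\mathrm{coker}$ across $\sigma$ using the index-zero Fredholm property.
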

Implicit in this result is the fact that the domain $\mathcal{D}^{\sigma}$, defined in the usual way as the set of $u \in X^\sigma$ such that $\mathcal{L}_su \in Y^\sigma$, does not depend on $s$. In order to establish this result, we derive estimates which are uniform in $\kappa>0$ and use these to establish a Fredholm alternative for the operator with $\kappa=0$. 

\begin{figure}
\begin{overpic}[width=.5\textwidth]{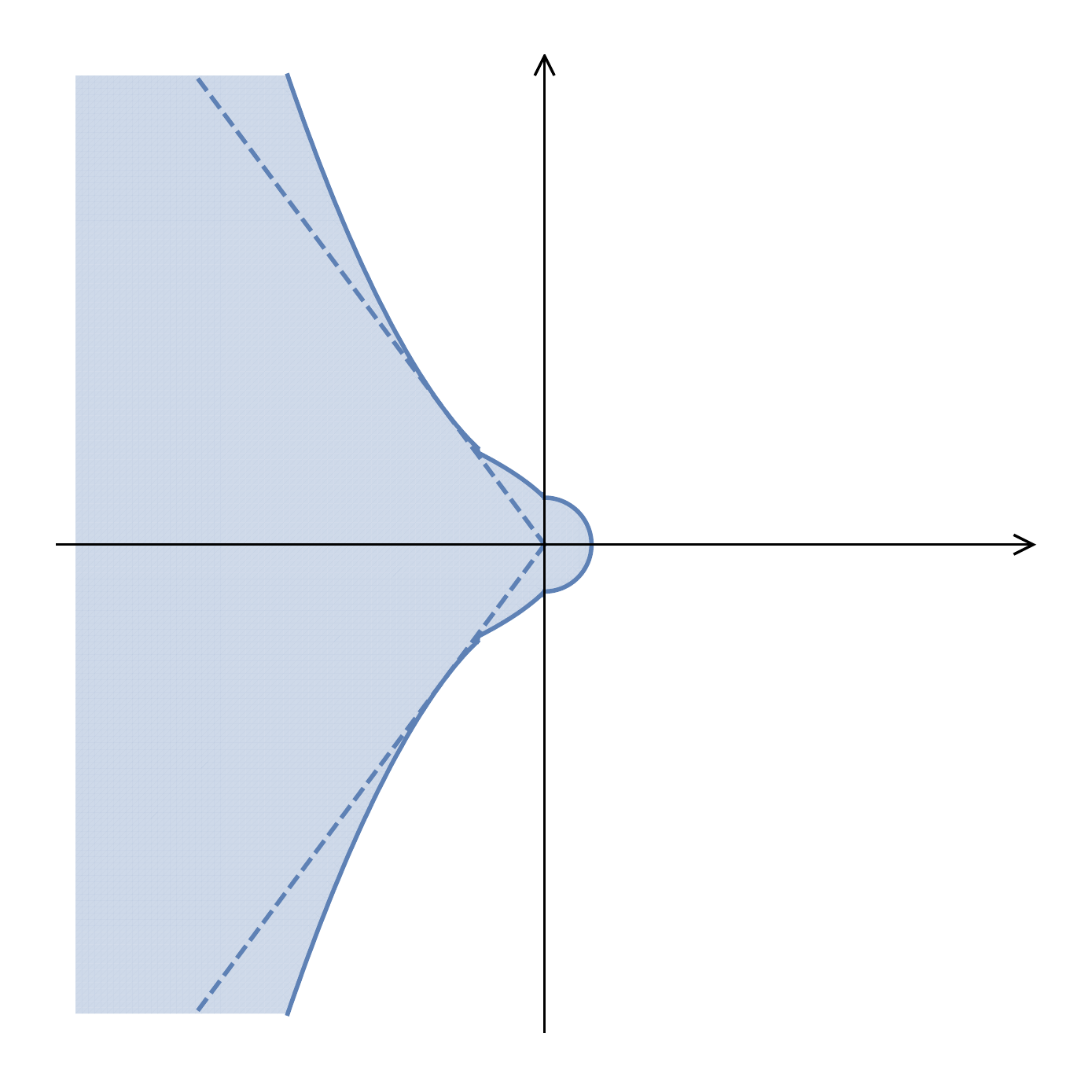}
         \put (47.5,96) {\footnotesize$\Im{s}$}
      \put (97,49) {\footnotesize$\Re{s}$}
      \put (75,75) {$\Omega_\sigma$}
	\end{overpic}
	\caption{The set $\Omega_\sigma$. As $\sigma$ varies, $\Omega_\sigma$ changes by rescaling, so that $\Omega_{\sigma} = \sigma^{-1} \Omega_1$. The dashed line indicates the boundary of the sector $\cup_{\sigma}\Omega_{\sigma} = \{\arg{s}<\varphi_0\}$, where numerically we find $\varphi_0 \simeq 0.704 \pi$.\label{OmFig}}
\end{figure}

Theorem \ref{Thm1} follows as a corollary of this result. In particular, we recover a precise characterisation of `outgoing': a solution of $\hat{L}_s w = 0$ is outgoing if $w(r) = e^{-\frac{s}{2} r} u(r^{-1})$, where $u \in X^\sigma$. In particular, this gives an asymptotic expansion:
\ben{og2}
w(r) \sim e^{-\frac{s}{2} r} \sum_{k=0}^\infty \frac{u_k}{r^k}\quad \textrm{ as }r \to \infty,
\een
for some $u_k \in \C$ satisfying $\abs{u_k} \lesssim \sigma^{-k} k!$. Note that in contrast to the situation for Type I potentials, this expansion alone does \emph{not} exclude the other branch of the general solution to $\hat{L}_s w = 0$ when $\Re(s)<0$, since $e^{\frac{s}{2} r}$ will be subleading to every term in the expansion. Thus the asymptotic expansion \eq{og2},  at least as usually understood, is not by itself a sufficient definition of outgoing for Type II potentials.

As was the case for type I potentials, we have gained something in our interpretation of the quasinormal modes. We shall see that \eq{nullwe2} naturally gives rise to a semigroup $\mathcal{S}(t): X^\sigma\to X^\sigma$, whose generator is given by:
\[
\mathcal{A} u = \int_x^1\left[ \p_\xi(\xi^2 \p_\xi u)- W(\xi) u(\xi) \right]d \xi
\]
so that $\mathcal{S}(t) = e^{\mathcal{A} t}$. The domain of $\mathcal{A}$ is precisely $\mathcal{D}^\sigma$ and moreover $u\in \mathcal{D}^\sigma$ is an eigenfunction of $\mathcal{A}$ with eigenvalue $s$ if and only if $\mathcal{L}_s u = 0$. We deduce that the quasinormal frequencies in $\Omega_\sigma$ are simply the eigenvalues of the generator of time evolution on $X^\sigma$, with respect to a null foliation.

Our assumptions on the potential do not require analyticity outside a compact set, and so approaches to define scattering resonances through complex scaling are not immediately applicable. After the appearance of this paper as a preprint, Galkowski--Zworski  \cite{GalZwo} showed that this issue can be overcome by observing that the Gevrey condition we assume on $W$ implies that the potential $V$ may be written as the sum of a piece which extends to an analytic function in a sector and a piece which decays exponentially. By treating the exponentially decaying part of the potential as a perturbation, they are able to show (in our notation) that $\hat{L}_s^{-1}: L_c^2(\R_{>1}) \to H^2_{loc.}(\R_{>1})$ admits a meromorphic extension to strips $\{s \in \C: \Re(s) > - \frac{\sigma'}{2}\}\setminus (-\infty, 0]$. This is a considerable improvement over our results in a neighbourhood of the negative real axis. 

Furthermore, their approach is also able to treat potentials $V$ which decay to leading order like $r^{-1}$. With our approach, in order that $W$ is finite at $x=0$ we are restricted to potentials decaying as $V(r) \sim r^{-2}$. It may be that this can be resolved within our framework, but it would certainly require a modified approach to establish the necessary estimates.

Galkowski--Zworski also, by a more refined estimate than our Lemma \ref{expLem}, show that the function $x \mapsto e^{\frac{s}{x}}$ is $(\sigma, 2)$-Gevrey regular on $[0,1]$ if and only if $\sigma \leq \frac{\abs{s} - \abs{\Im(s)}}{2}$. This implies in particular that if $\Omega_\sigma' \supset \Omega_\sigma$ is the maximal domain on which the conclusion of Proposition \ref{mainprop} holds, then $\Omega'_\sigma$ must look similar to $\Omega_\sigma$ as $\Im(s) \to \infty$. Indeed the boundary of $\Omega'_\sigma$ can at best be given by $\sigma \abs{\Im(s)} \sim \Re(s)^2 /4$, which is also the asymptotic form of the boundary of $\Omega_\sigma$. In this sense the region $\Omega_\sigma$ is optimal, at least as $\Im(s) \to \infty$. Near $s=0$, there appears to be room for improvement.

We note that the results of \cite{GalZwo}, which in some aspects extend those of this paper, nevertheless do not reproduce all of our results. In particular, there is no equivalent statement to Proposition \ref{mainprop}. In our view this result, which is the central one of the current work, has some advantages over meromorphic continuation results of the type appearing in Theorem \ref{Thm1} (and in \cite{GalZwo}). 

Firstly, the fact that in Theorem \ref{Thm1} one modifies the domain and range in order to continue beyond $\{\Re(s)=0\}$ means that the meromorphic continuation provides information in compact sets in $r$ about the late time behaviour of solutions of \eq{we} arising from compactly supported initial data. The result of Proposition \ref{mainprop}, however, permits us to obtain information \emph{globally} in $x$ and at late times for solutions of \eq{nullwe2} arising from data whose support is not restricted. In the context of the study of extremal black holes, this distinction is important -- requiring initial data for some wave equation on an extremal black hole background to be supported away from the future horizon is a physically restrictive assumption. Relatedly, we show that any scattering resonance (i.e.\ pole of the meromorphic continuation of $\hat{L}_s^{-1}$) is necessarily a quasinormal frequency (i.e.\ eigenvalue of $\mathcal{A}$), but not the converse. In the case of sub-extremal horizons (type I potentials) it is known that in general the scattering resonances may be a \emph{strict} subset of the quasinormal frequencies (see \cite[\S6]{CW}).

Secondly, the precise quantification of the `outgoing' condition provided by Proposition \ref{mainprop} has value, not least in the proof of Theorem \ref{LeaverThm} which establishes that Leaver's method gives correct results, at least in some region of the complex plane. Given that Leaver's method is one of the most commonly used algorithms for finding quasinormal frequencies in the physics literature this result is worthwhile in itself. 

Finally we find, purely as a matter of aesthetics, that it is satisfying to realise resonances as eigenvalues of an operator which directly arises as a generator of time translations for the original time-dependent problem. In contrast, the method of complex scaling realises resonances as eigenvalues of an operator which is connected to those arising from the time-dependent problem less directly, through an analytic continuation.

%

\section{The function spaces}
We now introduce the function spaces we shall require. We have already introduced the space of $(\sigma, k)-$Gevrey regular functions, which can be made into a Banach space in a straightforward fashion. For the majority of our results, however, we shall require $L^2-$based Gevrey spaces which have additional Hilbert space structure, and which are well adapted to the energy estimate approach we shall take. Assume $u \in C^\infty(\overline{I})$. Fix $\sigma >0$ and an integer $N$, then for $M\geq N$ an integer, $k \in \{ 0, 1, 2\}$ and $l\in \{ 0, 1 \}$ we introduce the partial semi-norm:
\be
\snorm{u}{\sigma, k, l}^{N,M} := \left( \sum_{n=N}^{M} \frac{\sigma^{2n}}{n!^2(n+1)!^2}  n^{k+l}  \int_0^1 \left( \frac{x}{\sigma}\right)^k \abs{\partial_x^n u}^2dx \right)^\frac{1}{2}
\ee
and the full Gevrey semi-norm:
\be
\snorm{u}{\sigma, k, l}^N := \left( \sum_{n=N}^{\infty} \frac{\sigma^{2n}}{n!^2(n+1)!^2}  n^{k+l}  \int_0^1 \left( \frac{x}{\sigma}\right)^k \abs{\partial_x^n u}^2dx \right)^\frac{1}{2}
\ee
For $k+l=0$ we define the norm:
\[
\norm{u}{\sigma, 0, 0} :=\snorm{u}{\sigma, 0, 0}^0,
\]
and in the case $k+l>0$ we define:
\[
\norm{u}{\sigma, k, l} :=\snorm{u}{\sigma, k, l}^0+ \left(\int_0^1 \left( \frac{x}{\sigma}\right)^k \abs{ u}^2dx\right)^{\frac{1}{2}}.
\]
We note the following useful facts:
\be
\norm{u}{\sigma, k, l} \sim  \left( \sum_{n=0}^{\infty} \frac{\sigma^{2n}}{n!^2(n+1)!^2}  (n+1)^{k+l}  \int_0^1 \left( \frac{x}{\sigma}\right)^k \abs{\partial_x^n u}^2dx \right)^\frac{1}{2}
\ee
where $\sim$ denotes equivalence of norms. We also have
\be
N^{\frac{1}{2}}  \snorm{u}{ \sigma, k, 0}^{N,M} \leq \snorm{u}{ \sigma, k, 1}^{N,M}
\ee
and
\[
\left(\snorm{u}{ \sigma, 1, l}^N\right)^2 \leq \snorm{u}{ \sigma, 0, l}^N \snorm{u}{ \sigma, 2, l}^N
\]
which follows from the Cauchy-Schwarz inequality. It will also be useful to introduce the partial boundary semi-norm:
\[
\ssnorm{u}{\sigma}^{N, M} := \left(\sum_{ n =N}^M \frac{\sigma^{2n+1}}{n!^2(n+1)!^2} \abs{u^{(n)}(0)}^2\right)^{\frac{1}{2}}
\]
and the full boundary Gevrey semi-norm:
\[
\ssnorm{u}{\sigma}^{N} := \left(\sum_{ n =N}^\infty \frac{\sigma^{2n+1}}{n!^2(n+1)!^2} \abs{u^{(n)}(0)}^2\right)^{\frac{1}{2}}.
\]
We introduce the $L^2-$Gevrey spaces:
\begin{align*}
G_{\sigma,k, l} = \left \{ u \in C^\infty(\overline{I}) \Big | \norm{u}{\sigma, k, l} < \infty \right\}.
\end{align*}
These are Hilbert spaces, with inner product defined from the norm in the obvious fashion. Finally, we introduce the spaces $X^\sigma, Y^\sigma$ by:
\[
X^\sigma := \left \{ u \in C^\infty(\overline{I}) \Big | \norm{\p_x u}{\sigma, 0, 0}+\snorm{\p_x u}{\sigma, 1, 0}^0+\snorm{\p_x u}{\sigma, 2, 0}^0 + \ssnorm{\p_x u}{\sigma}^0< \infty, u(1)=0 \right\},
\]
\[
Y^\sigma := \left \{ u \in C^\infty(\overline{I}) \Big | \norm{u}{\sigma, 0, 0}+\snorm{u}{\sigma, 1, 0}^0+ \ssnorm{ u}{\sigma}^0< \infty \right\}.
\]
Again, these are Hilbert spaces in a natural way.
\subsection{Compactness} Crucial to our argument will be the following compactness results, which are adapted from \cite[\S III.10]{Bourbaki}
\begin{Theorem}\label{compact}
The embedding $X^{\sigma} \hookrightarrow Y^\sigma$ is compact.
\end{Theorem}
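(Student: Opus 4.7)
The plan is the standard extraction-plus-tail argument for Hilbert spaces defined by weighted sums over derivative orders. I would first establish a universal tail estimate in which the high-order part of the $Y^\sigma$ norm is controlled by the $X^\sigma$ norm, and then use Rellich--Kondrachov together with a diagonal argument to handle the low-order part, closing with a standard $\varepsilon/2$ split.

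The starting point is the elementary identity
\[
\frac{\sigma^{2n}/[n!^2(n+1)!^2]}{\sigma^{2(n-1)}/[(n-1)!^2 n!^2]} = \frac{\sigma^2}{n^2(n+1)^2}, \qquad n \geq 1,
\]
which says that the $n$-th term of $\norm{u}{\sigma,0,0}^2$ equals $\sigma^2/[n^2(n+1)^2]$ times the $(n{-}1)$-th term of $\norm{\p_x u}{\sigma,0,0}^2$. Summing over $n > N$ therefore gives
\[
\sum_{n>N} \frac{\sigma^{2n}}{n!^2(n+1)!^2} \int_0^1 |u^{(n)}|^2 dx \;\leq\; \frac{\sigma^2}{N^4}\, \norm{\p_x u}{\sigma,0,0}^2 ,
\]
and the same re-indexing yields identical $1/N^4$ tail bounds for $\snorm{u}{\sigma,1,0}^0$ in terms of $\snorm{\p_x u}{\sigma,1,0}^0$ and for $\ssnorm{u}{\sigma}^0$ in terms of $\ssnorm{\p_x u}{\sigma}^0$. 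The only $Y^\sigma$ contributions not directly captured by a shifted $X^\sigma$ term are the $n=0$ piece $\int_0^1 |u|^2 dx$ in $\norm{u}{\sigma,0,0}$ and the $n=0$ piece $\sigma |u(0)|^2$ in $\ssnorm{u}{\sigma}^0$; these are absorbed via the boundary condition $u(1)=0$, since the fundamental theorem of calculus and Poincar\'e give both $\norm{u}{L^2(I)}$ and $|u(0)|$ bounded by $\norm{\p_x u}{L^2(I)} \leq \norm{\p_x u}{\sigma,0,0}$.

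Once the tail estimate is in place, let $\{u_j\} \subset X^\sigma$ be bounded. For each fixed $N$, the $X^\sigma$ bound forces $\int_0^1 |\p_x^m u_j|^2 dx$ to be bounded uniformly in $j$ for every $m \leq N+1$, so $\{u_j\}$ is bounded in $H^{N+1}(I)$. A diagonal extraction based on Rellich--Kondrachov then yields a subsequence (still called $u_j$) which converges in $H^k(I)$ for every $k \in \N$; the point values $u_j^{(n)}(0)$ also converge for every $n$ by one-dimensional Sobolev embedding. Given $\varepsilon >0$, I first fix $N$ so large that the tail estimate applied to $u_j - u_k$ contributes less than $\varepsilon/2$ in $Y^\sigma$ uniformly in $j,k$, and then pick $j,k$ large enough that the finite head sum (over $n \leq N$), together with the $L^2$ and boundary trace of $u$ itself, is less than $\varepsilon/2$. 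This shows $\{u_j\}$ is Cauchy, hence convergent, in $Y^\sigma$.

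The analytical content is light: the $1/n^4$ gain is comfortably summable and no delicate estimate is required anywhere. The main cost is purely bookkeeping --- the shift identity must be checked for each of the three seminorm ingredients (the $(x/\sigma)^0$ and $(x/\sigma)^1$ weights, plus the boundary trace at $x=0$), and the low-index exceptions ($n=0$ for the norm and boundary seminorm, and $n=1$ for the weighted seminorm, where the factor $n$ in its definition annihilates the matching $X^\sigma$ term at $n{-}1=0$) must be closed off by elementary Poincar\'e-type estimates. I expect this combinatorial housekeeping to be the only real obstacle.
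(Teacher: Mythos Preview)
Your proposal is correct and follows essentially the same strategy as the paper: both use the shift identity $\sigma^{2n}/[n!^2(n+1)!^2] = \sigma^2/[n^2(n+1)^2]\cdot \sigma^{2(n-1)}/[(n-1)!^2 n!^2]$ to obtain a uniform tail bound, and then invoke Rellich--Kondrachov (and Bolzano--Weierstrass for the boundary traces) on the finite head. The only cosmetic differences are that the paper phrases the argument in terms of total boundedness rather than sequential compactness, and that the paper treats the $(x/\sigma)$-weighted piece as a separate compact embedding $\tilde G_{\sigma,1,0}\hookrightarrow G_{\sigma,1,0}$ (thereby citing a weighted Rellich--Kondrachov result), whereas your organization exploits the unweighted control from $\norm{\p_x u}{\sigma,0,0}$ together with $x\leq 1$ and so avoids that citation.
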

\begin{proof}
We first show that the space $\tilde{G}_{\sigma, 0, 0} := \{ u \in C^\infty(\overline{I}) | \norm{\p_x u}{\sigma, 0, 0} < \infty, u(1)=0\}$ embeds compactly into $G_{\sigma, 0, 0}$. We note that:
\begin{align*}
\norm{u}{\sigma, 0, 0}^2 &= \sum_{n=0}^{\infty} \frac{\sigma^{2n}}{n!^2(n+1)!^2}   \int_0^1  \abs{\partial_x^n u}^2dx \\
&= \int_0^1 \abs{u}^2 dx + \sum_{n=0}^{\infty} \frac{\sigma^{2n+2}}{(n+1)!^2(n+2)!^2}   \int_0^1  \abs{\partial_x^{n+1} u}^2dx \\
&\leq \int_0^1 \abs{u}^2 dx + \sigma^2 \sum_{n=0}^{\infty} \frac{\sigma^{2n}}{n!^2(n+1)!^2}   \int_0^1  \abs{\partial_x^{n+1} u}^2dx\\
&\leq C_{\sigma} \norm{\partial_x u}{\sigma, 0, 0}^2
\end{align*}
where in the last line we used that $\norm{u}{L^2(I)}$ is controlled by $\norm{\p_x u}{L^2(I)}$ for a function satisfying $u(1)=0$. As a result, we see that the embedding $\tilde{G}_{\sigma, 0, 0}\hookrightarrow G_{\sigma, 0, 0}$ is continuous.

Let $B$ be the closed unit ball in $\tilde{G}_{\sigma, 0, 0}$. We wish to show that $B$ is precompact in $G_{\sigma, 0, 0}$. Fix $\epsilon>0$ and suppose $u \in B$. Then for $n\geq 0$ we have:
\[
\sum_{n=0}^\infty \frac{\sigma^{2n}}{n!^2(n+1)!^2}   \int_0^1 \abs{\partial_x^{n+1} u}^2dx \leq 1,
\]
which implies
\[
\sum_{n=p}^\infty \frac{\sigma^{2n}}{n!^2(n+1)!^2}   \int_0^1 \abs{\partial_x^{n} u}^2dx \leq \frac{\sigma^2}{p^2 (p+1)^2}  \sum_{n=p}^\infty \frac{\sigma^{2(n-1)}}{n!^2(n-1)!^2}   \int_0^1 \abs{\partial_x^{n} u}^2dx \leq \frac{\sigma^2}{p^2 (p+1)^2}.
\]
Hence, we may choose $p\in \N$ sufficiently large that $\snorm{u}{\sigma, 0, 0}^p <\epsilon$ for all $u \in B$. In particular, this implies that for $u, w \in B$ we have:
\[
\norm{w-u}{\sigma, 0, 0} \leq C_{p, \sigma} \norm{w-u}{H^p(I)} + 2 \epsilon.
\]

Now, since $\tilde{G}_{\sigma, 0, 0}\hookrightarrow H^{p+1}(I)$, we have that $B$ is bounded in $H^{p+1}(I)$, hence totally bounded in $H^{p}(I)$ by Rellich-Kondrachov. Thus there exists a finite set $C \subset B$ such that for any $u \in B$ we can find $w \in C$ with $\norm{w-u}{H^p(I)} < C_{p, \sigma}^{-1} \epsilon$. By construction, we then have:
\[
\norm{w-u}{\sigma, 0, 0} \leq 3 \epsilon.
\]
and hence $B$ is totally bounded in $G_{\sigma, 0, 0}$.

A similar argument shows that $\tilde{G}_{\sigma, 1, 0} := \{ u \in C^\infty(\overline{I}) | \norm{\p_x u}{\sigma, 1, 0} < \infty, u(1)=0\}$ embeds compactly into $G_{\sigma, 1, 0}$, the only difference being that one requires the Rellich-Kondrachov result for a weighted space, (see for example \cite{Holzegel:2012wt}). The boundary norms can be similarly dealt with (using Bolzano-Weierstrass in place of Rellich-Kondrachov) to give the result.
\end{proof}
We remark that this same basic proof can be easily adapted to establish several other compact embeddings we shall require, such as $G_{\sigma', 0, 0} \cc G_{\sigma, 0, 0}$ for $\sigma' >\sigma$, but we will not give detailed proofs each time.

We shall now consider some basic operators between our Gevrey spaces and establish boundedness and compactness as appropriate.

\begin{Theorem}\label{ProdThm}
Suppose $u \in G_{\sigma, k, l}$ and $f$ is $(\sigma', 2)-$Gevrey for some $\sigma' > \sigma$. Then $f u  \in G_{\sigma, k, l}$ and there exists a constant $C$ depending on $\sigma, \sigma', k, l$ and $f$ such that:
\[
\norm{fu}{\sigma, k, l} \leq C\norm{u}{\sigma, k, l}.
\]
\end{Theorem}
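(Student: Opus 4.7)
The plan is to use the Leibniz rule together with the $(\sigma',2)$-Gevrey bound on $f$ to reduce the problem to a discrete convolution estimate, and then invoke Young's inequality to close. The key combinatorial observation is that one can rewrite the prefactors in Leibniz so that the Gevrey weights organise themselves into a geometric sum with ratio $\sigma/\sigma' < 1$.

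Concretely, I would first fix $u \in C^\infty(\overline{I})$ with $u(1) = 0$ and set $w_n := \frac{\sigma^n}{n!(n+1)!}$, so that $\norm{u}{\sigma,k,l}^2$ is essentially $\sum_n (n+1)^{k+l} \int_0^1 (x/\sigma)^k \abs{w_n \partial_x^n u}^2 dx$. Using Leibniz, $\partial_x^n(fu) = \sum_{j=0}^n \binom{n}{j} f^{(j)} u^{(n-j)}$, together with the hypothesis $\abs{f^{(j)}} \leq C(\sigma')^{-j}(j!)^2$, I would compute
\[
w_n \abs{\partial_x^n(fu)} \;\leq\; C \sum_{j=0}^n \left( \frac{\sigma}{\sigma'}\right)^{\!j} \frac{1}{\binom{n+1}{j}} \cdot w_{n-j}\abs{u^{(n-j)}} \;\leq\; C \sum_{j=0}^n \rho^{j}\, w_{n-j}\abs{u^{(n-j)}},
\]
where $\rho = \sigma/\sigma' < 1$, by virtue of the identity $\sigma^{n-j}\cdot\frac{j!}{(n-j)!(n+1)!} = \binom{n+1}{j}^{-1} w_{n-j}$. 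This is the main calculation and, I expect, the main obstacle—everything downstream is a packaging argument.

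Having this pointwise-in-$x$, pointwise-in-$n$ convolution bound, I would then handle the weight $n^{k+l}$ by splitting $n^{(k+l)/2} \leq C_{k+l}\big( j^{(k+l)/2} + (n-j)^{(k+l)/2}\big)$, producing two discrete convolutions whose kernels $\rho^j$ and $j^{(k+l)/2}\rho^j$ both lie in $\ell^1(\N)$ because $\rho < 1$. Fixing $x$ and applying Young's inequality $\|a * b\|_{\ell^2} \leq \|a\|_{\ell^1}\|b\|_{\ell^2}$ in the index $n$ gives
\[
\sum_n (n+1)^{k+l} \abs{w_n \partial_x^n(fu)(x)}^2 \;\leq\; C' \sum_n (n+1)^{k+l}\abs{w_n u^{(n)}(x)}^2,
\]
and then I would integrate against $(x/\sigma)^k\,dx$ on $I$ to obtain $\snorm{fu}{\sigma,k,l}^0 \leq C \snorm{u}{\sigma,k,l}^0$.

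To complete the norm bound, I would treat the remaining piece $\big(\int_0^1 (x/\sigma)^k \abs{fu}^2 dx\big)^{1/2}$ which appears when $k+l>0$ by the trivial estimate $\|f\|_{L^\infty(I)} < \infty$ (since $f$ is in particular continuous on $[0,1]$). For the boundary seminorm $\ssnorm{\cdot}{\sigma}^0$ appearing in the definitions of $X^\sigma$ and $Y^\sigma$ (not needed for $G_{\sigma,k,l}$ itself, but worth noting), the same pointwise convolution estimate evaluated at $x=0$ and the same Young's inequality argument in $n$ would give the analogous bound, since the boundary seminorm involves exactly the weights $w_n^2$ up to an overall factor of $\sigma$. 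Collecting, one obtains $\norm{fu}{\sigma,k,l} \leq C \norm{u}{\sigma,k,l}$ with $C = C(\sigma, \sigma', k, l, f)$, as claimed.
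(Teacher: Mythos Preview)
Your proof is correct and follows essentially the same route as the paper: Leibniz rule, the $(\sigma',2)$-Gevrey bound on $f$, and the combinatorial identity that produces the inverse binomial factor $\binom{n+1}{j}^{-1}\leq 1$ together with the geometric ratio $(\sigma/\sigma')^j$. The only cosmetic difference is that the paper applies Cauchy--Schwarz in the summation index and then swaps the order of summation, whereas you package the same estimate as a discrete convolution and invoke Young's inequality; the handling of the weight $(n+1)^{k+l}$ also differs slightly (you split $n^{(k+l)/2}\lesssim j^{(k+l)/2}+(n-j)^{(k+l)/2}$, while the paper absorbs it into a shifted inverse binomial), but both arguments close in the same way.
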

\begin{proof}
By assumption there exists $K>0$ such that:
\[
\sup_{x \in I} \abs{f^{(n)}(x)} \leq K (\sigma')^{-n} (n!)^2
\]
for all $n$. Let $w = f u$. Then by Leibniz rule we have:
\[
\frac{\p^n w}{n!} = \sum_{a+b=n} \frac{\p^a f}{a!} \frac{\p^b u}{b!}.
\]
This implies:
\begin{align*}
&\frac{\sigma^n}{n!(n+1)!} (n+1)^{\frac{1}{2}(k+l)} \abs{\p^n w} \\&\qquad  \leq \sum_{a+b=n} (\sigma')^a \frac{\abs{\p^af}}{a!^2}  \cdot\frac{\sigma^b}{b!(b+1)!} (b+1)^{\frac{1}{2}(k+l)} \abs{\p^b u}\cdot \left( \frac{\sigma}{\sigma'}\right)^a \cdot \frac{a!(b+1)!}{(n+1)!} \left(\frac{n+1}{b+1}\right)^{\frac{1}{2}(k+l)}.
\end{align*}
Now, we note that if $a+b=n$ and $n$ is sufficiently large (low $n$ terms can be trivially bounded):
\begin{align*}
\frac{a!^2(b+1)!^2}{(n+1)!^2} \left(\frac{n+1}{b+1}\right)^{k+l} &= \frac{a!(b+1)!}{(n+1)!} \cdot \frac{a!(b+1-k-l)!}{(n+1-k-l)!} \\& \quad \times  \frac{n+1}{n+1} \cdots \frac{n+1}{n-k-l+2} \cdot  \frac{b+1}{b+1} \cdots \frac{b-k-l+2}{b+1} \\
\end{align*}
The terms on the second line can be bounded uniformly in $n, b$ by a constant depending only on $k, l$, while the terms on the first line are both inverse powers of binomial coefficients, and hence bounded above by $1$. We deduce:
\begin{align*}
&\frac{\sigma^n}{n!(n+1)!} (n+1)^{\frac{1}{2}(k+l)} \abs{\p^n w} \\&\qquad  \leq C_{k, l} K \sum_{b=0}^n \frac{\sigma^b}{b!(b+1)!} (b+1)^{\frac{1}{2}(k+l)} \abs{\p^b u}\cdot \left( \frac{\sigma}{\sigma'}\right)^{n-b}.
\end{align*}
where we have used our assumption on $f$.  Now, by Cauchy-Schwarz we have:
\begin{align*}
&\frac{\sigma^{2n}}{n!^2(n+1)!^2} (n+1)^{k+l} \abs{\p^n w}^2 \\&\qquad  \leq C_{k, l, K}\left(\sum_{b=0}^n \frac{\sigma^{2b}}{b!^2(b+1)!^2} (b+1)^{k+l} \abs{\p^b u}^2\cdot \left( \frac{\sigma}{\sigma'}\right)^{n-b} \right) \left( \sum_{b=0}^n \left( \frac{\sigma}{\sigma'}\right)^{n-b}\right).
\end{align*}
Recalling that $\sigma'>\sigma$ by assumption, we see:
\[
 \sum_{b=0}^n \left( \frac{\sigma}{\sigma'}\right)^{n-b} =  \sum_{a=0}^n \left( \frac{\sigma}{\sigma'}\right)^{a} \leq \frac{\sigma'}{\sigma'-\sigma}
\]
so that:
\begin{align*}
&\sum_{n=0}^M \frac{\sigma^{2n}}{n!^2(n+1)!^2} (n+1)^{k+l} \abs{\p^n w}^2 \\&\qquad  \leq C_{k, l, K, \sigma, \sigma'}  \sum_{n=0}^M \sum_{b=0}^n \frac{\sigma^{2b}}{b!^2(b+1)!^2} (b+1)^{k+l} \abs{\p^b u}^2\cdot \left( \frac{\sigma}{\sigma'}\right)^{n-b}.
\end{align*}
Finally, since $ \sum_{n=0}^M \sum_{b=0}^n= \sum_{b=0}^M \sum_{n=b}^M$, and $\sum_{n=b}^M \left( \frac{\sigma}{\sigma'}\right)^{n-b}\leq \sum_{n=b}^\infty \left( \frac{\sigma}{\sigma'}\right)^{n-b} = \sigma'/(\sigma'-\sigma)$, we conclude:
\[
\sum_{n=0}^M \frac{\sigma^{2n}}{n!^2(n+1)!^2} (n+1)^{k+l} \abs{\p^n w}^2 \leq  C_{k, l, K, \sigma, \sigma'}\sum_{b=0}^M \frac{\sigma^{2b}}{b!^2(b+1)!^2} (b+1)^{k+l} \abs{\p^b u}^2,
\]
upon multiplying by $\left(\frac{x}{\sigma}\right)^k$ and integrating over $I$, the result follows by sending $M \to \infty$.
\end{proof}
Note that in fact our method of proof can be readily adapted to establish the result:
\[
\snorm{u}{\sigma, k, l}^{0,M} \leq C \snorm{f u}{\sigma, k, l}^{0,M},
\]
where the constant $C$ does not depend on $M$.

Combining the two theorems above, we immediately obtain:
\begin{Corollary}\label{compact2}
Suppose $f$ is $(\sigma', 2)-$Gevrey for some $\sigma'>\sigma$. Then the map
\be
\begin{array}{rcrcl}
F&:&X^\sigma&\to &Y^\sigma \\
&& u&\mapsto& fu
\end{array}
\ee
is compact.
\end{Corollary}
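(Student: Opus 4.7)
The plan is to realize $F$ as a composition of the compact embedding from Theorem \ref{compact} with a bounded multiplication operator, exploiting the two-sided ideal property of compact operators. Concretely, I would factor
\[
F : X^\sigma \xrightarrow{\;\iota\;} Y^\sigma \xrightarrow{\;M_f\;} Y^\sigma,
\]
where $\iota$ is the inclusion (compact by Theorem \ref{compact}) and $M_f$ denotes multiplication by $f$. Since the composition of a compact operator with a bounded operator is compact, the whole burden of the proof reduces to showing that $M_f : Y^\sigma \to Y^\sigma$ is bounded.

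To establish boundedness of $M_f$, I would control each of the three pieces of $\norm{\cdot}{Y^\sigma}$ in turn. For the $\norm{u}{\sigma,0,0}$ piece, a direct application of Theorem \ref{ProdThm} with $(k,l)=(0,0)$ gives $\norm{fu}{\sigma,0,0}\leq C\norm{u}{\sigma,0,0}$. For the $\snorm{u}{\sigma,1,0}^0$ piece, I would apply Theorem \ref{ProdThm} with $(k,l)=(1,0)$ to obtain $\norm{fu}{\sigma,1,0}\le C\norm{u}{\sigma,1,0}$, and then observe that since $x\leq 1$ on $I$, the weighted $L^2$ tail in $\norm{u}{\sigma,1,0}$ is dominated by $\sigma^{-1/2}\norm{u}{\sigma,0,0}$, so both ingredients are controlled by $\norm{u}{Y^\sigma}$.

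The one thing not literally covered by Theorem \ref{ProdThm} is the boundary seminorm $\ssnorm{\cdot}{\sigma}^0$, and I expect this to be the main technical wrinkle. Here I would repeat the Cauchy--Schwarz/Leibniz strategy of the proof of Theorem \ref{ProdThm} but with the integrals replaced by point values at $x=0$: writing $(fu)^{(n)}(0)=\sum_{a+b=n}\binom{n}{a}f^{(a)}(0)u^{(b)}(0)$, dividing by $n!(n+1)!$, and using the Gevrey bound $\abs{f^{(a)}(0)}\leq K(\sigma')^{-a}(a!)^2$, the extra factor $a!(b+1)!/(n+1)!=1/\binom{n+1}{a}\leq 1$ absorbs cleanly. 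A geometric summation in $(\sigma/\sigma')^{n-b}$ (valid since $\sigma'>\sigma$), followed by interchanging the double sum $\sum_{n=0}^\infty\sum_{b=0}^n = \sum_{b=0}^\infty\sum_{n=b}^\infty$, then yields $\ssnorm{fu}{\sigma}^0\leq C\ssnorm{u}{\sigma}^0$ with $C=C(K,\sigma,\sigma')$.

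Putting the three pieces together gives $\norm{fu}{Y^\sigma}\leq C\norm{u}{Y^\sigma}$, so $M_f$ is bounded on $Y^\sigma$. Composition with the compact embedding $\iota:X^\sigma\hookrightarrow Y^\sigma$ then produces $F=M_f\circ\iota$ as the desired compact operator, and the corollary follows. The only genuinely new estimate beyond quoting the two theorems is the boundary-seminorm Leibniz bound, which is essentially a discrete replica of the proof already given for Theorem \ref{ProdThm}.
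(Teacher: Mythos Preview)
Your proposal is correct and follows precisely the route the paper intends: the paper states the corollary as an immediate consequence of Theorem~\ref{compact} (compactness of $\iota:X^\sigma\hookrightarrow Y^\sigma$) and Theorem~\ref{ProdThm} (boundedness of multiplication), i.e.\ the factorisation $F=M_f\circ\iota$. Your observation that the boundary seminorm $\ssnorm{\cdot}{\sigma}^0$ is not literally covered by Theorem~\ref{ProdThm} and requires a pointwise Leibniz/Cauchy--Schwarz argument is a detail the paper glosses over, and your sketch of how to handle it is exactly right.
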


Next, it will be convenient to characterise first order differential operators mapping $X^\sigma \to Y^\sigma$.
\begin{Theorem}\label{DThm}
The map
\be
\begin{array}{rcrcl}
D&:&X^\sigma&\to &Y^\sigma \\
&& u&\mapsto& \frac{d u}{dx}
\end{array}
\ee
is bounded, while the map
\be
\begin{array}{rcrcl}
\tilde{{D}}&:&X^\sigma&\to &Y^\sigma \\
&& u&\mapsto& x \frac{d u}{dx}
\end{array}
\ee
is compact.
\end{Theorem}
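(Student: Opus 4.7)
The boundedness of $D$ is immediate from the definitions of the two spaces: each of the three seminorms comprising $\norm{u}{Y^\sigma}$, namely $\norm{\p_x u}{\sigma,0,0}$, $\snorm{\p_x u}{\sigma,1,0}^0$, and $\ssnorm{\p_x u}{\sigma}^0$, already appears as a piece of $\norm{u}{X^\sigma}$, so the inequality $\norm{Du}{Y^\sigma} \leq \norm{u}{X^\sigma}$ is a tautology.

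For $\tilde{D}$ the plan is first to establish boundedness and then, by sharpening the same estimates, to read off compactness. Set $w := \p_x u$ so that $v := \tilde{D} u = xw$. The Leibniz rule gives, for $n \geq 1$,
\be
v^{(n)} = x w^{(n)} + n w^{(n-1)}, \qquad v(0) = 0, \qquad v^{(n)}(0) = n\, w^{(n-1)}(0).
\ee
For boundedness I would split $|v^{(n)}|^2 \leq 2|x w^{(n)}|^2 + 2 n^2 |w^{(n-1)}|^2$ and estimate each of the three defining seminorms of $Y^\sigma$ separately. On the $x w^{(n)}$ piece one uses $x \leq 1$ to absorb the extra factor of $x$ into the existing $Y^\sigma$-weights; on the $n w^{(n-1)}$ piece, shifting the summation index by $m = n-1$ converts the Gevrey weight $\sigma^{2n} n^2/(n!^2(n+1)!^2)$ into $\sigma^{2m+2}/(m!^2(m+1)!^2(m+2)^2)$, which is controlled by the corresponding seminorm of $w$ with a constant depending only on $\sigma$. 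The boundary seminorm is handled analogously using the same shift applied to $w^{(m)}(0)$.

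For compactness I propose to repeat the total-boundedness argument used in Theorem \ref{compact}: given a sequence $(u_j)$ in the unit ball of $X^\sigma$, extract a subsequence along which $(\tilde{D} u_j)$ is Cauchy in $Y^\sigma$. This needs (i) a uniform tail estimate in $p$ for each seminorm of $v$, and (ii) precompactness of the finite truncation via Rellich--Kondrachov (in standard and weighted form) together with Bolzano--Weierstrass for the boundary values. The decisive sharpening of the boundedness estimate is the tail bound: the same Leibniz splitting and index shift yield
\be
\sum_{n \geq p} \frac{\sigma^{2n}}{n!^2(n+1)!^2} \int_0^1 |v^{(n)}|^2 dx \lesssim \frac{\sigma^2}{p^2}\bigl(\snorm{w}{\sigma,2,0}^0\bigr)^2 + \frac{\sigma^2}{(p+1)^2}\norm{w}{\sigma,0,0}^2,
\ee
and analogous $\O{p^{-2}}$ bounds for the tails of $\snorm{v}{\sigma,1,0}^0$ and $\ssnorm{v}{\sigma}^0$. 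The crucial observation is that the extra factor of $x$ in $\tilde{D}$ upgrades an $(x/\sigma)^k$ weight on $w^{(n)}$ into an $(x/\sigma)^{k+1}$ weight; in particular the $(x/\sigma)^0$ tail of $\tilde{D} u$ is dominated by $\snorm{\p_x u}{\sigma,2,0}^0$, a seminorm that is part of $\norm{\cdot}{X^\sigma}$ but \emph{absent} from $\norm{\cdot}{Y^\sigma}$. This is precisely what makes $\tilde{D}$ compact while $D$ is only bounded.

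With the tails uniformly small, the finite truncation is handled exactly as in Theorem \ref{compact}, using Rellich--Kondrachov in $H^{p+1}(I)$ for the unweighted piece, its weighted analogue (see \cite{Holzegel:2012wt}) for the $(x/\sigma)^1$ piece, and Bolzano--Weierstrass in $\C^{p+1}$ for the boundary values; assembling the resulting $\epsilon$-nets produces a finite $\epsilon$-net for $\tilde{D}(B_{X^\sigma})$ in $Y^\sigma$, hence precompactness. The main obstacle is purely organisational: one must carry out the Leibniz expansion and the index shift cleanly enough to read off $\O{p^{-2}}$ decay in each seminorm, and in particular to make the improvement coming from the extra factor of $x$ manifest in the $(x/\sigma)^0$ piece of the tail estimate.
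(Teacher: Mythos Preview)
Your argument is correct and follows the same mechanism as the paper: the Leibniz identity $\p^n(x\p_x u)=x\p_x^{n+1}u+n\p_x^n u$ together with an index shift converts the extra factor of $x$ into an extra power of $n$ in the Gevrey weights, which is precisely what produces the compactness gain. The only difference is packaging: the paper shows in one stroke that $\tilde D u$ lands in the stronger space $G_{\sigma,0,2}$ (controlled by $\norm{\p_x u}{\sigma,0,0}+\snorm{\p_x u}{\sigma,2,0}^0$), and then invokes the compact embeddings $G_{\sigma,0,2}\cc G_{\sigma,0,0}$ and $G_{\sigma,0,2}\hookrightarrow G_{\sigma,1,1}\cc G_{\sigma,1,0}$ rather than redoing the total-boundedness argument seminorm by seminorm. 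Your direct tail computation is equivalent; one small overstatement is that the $xw^{(n)}$ contribution to the $\snorm{\cdot}{\sigma,1,0}$ tail only yields $O(p^{-1})$ decay (you have $n(x/\sigma)\cdot x^2\leq \sigma\,(x/\sigma)^2$ and then gain a single $1/p$ from $n$ versus $n^2$), not $O(p^{-2})$, but of course $O(p^{-1})$ suffices.
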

\begin{proof}
The first part follows immediately from the definition of the spaces $X^\sigma$ and $Y^\sigma$. To establish the second part, we first claim that if $u \in X^\sigma$, then $w:=\tilde{{D}}u \in G_{\sigma, 0, 2}$. To see this, we note that:
\[
\p^n w = x \p^{n+1} u + n \p^n u.
\]
As a result,
\begin{align*}
 \sum_{n=0}^{\infty} \frac{\sigma^{2n}}{n!^2(n+1)!^2}  n^{2}  \int_0^1 \abs{\partial_x^n w}^2dx &\lesssim \sum_{n=0}^{\infty} \frac{\sigma^{2n}}{n!^2(n+1)!^2}  n^{2}  \int_0^1 \left(\frac{x}{\sigma}\right)^2\abs{\partial_x^{n+1} u}^2dx \\&\qquad + \sum_{n=0}^{\infty} \frac{\sigma^{2n}}{n!^2(n+1)!^2}  n^{4}  \int_0^1\abs{\partial_x^{n} u}^2dx \\
 &\lesssim \norm{\p u}{\sigma,0,0} + \norm{\p u}{\sigma,2,0}
\end{align*}
To complete the proof, we note that $G_{\sigma, 0, 2} \cc G_{\sigma, 0, 0}$ by an argument analogous to the proof of Theorem \ref{compact}. We also have that $G_{\sigma, 0, 2} \hookrightarrow G_{\sigma, 1, 1}$ since $\int_0^1 x \abs{u}^2 dx \leq \int_0^1 \abs{u}^2 dx$. Observing that $G_{\sigma, 1, 1} \cc G_{\sigma, 1, 0}$, and making a similar argument for the boundary terms, the result follows.
\end{proof}

\section{The time dependent problem}

As discussed in the introduction, after a suitable change of variables, we can consider the wave equation \eq{we} in the form
\ben{nullwelnew}
\frac{\p}{\p x} \left( x^2 \frac{\p \psi}{\p x}\right) + \frac{\p^2 \psi}{\p x \p t} - W(x) \psi = 0,
\een
defined on the interval $0<x\leq 1$, where we assume that $W$ is $(\sigma', 2)-$Gevrey regular for some $\sigma'$. The main result of this section is to show that \eq{nullwelnew} preserves the space $X^\sigma$ for $\sigma < \sigma'$.

\begin{Theorem}
Suppose $\psi$ solves \eq{nullwelnew} subject to Dirichlet boundary conditions at $x=1$. Then if $\psi(0, \cdot) \in X^\sigma$, we have that $\psi(t, \cdot) \in X^\sigma$ for all $t\geq 0$ and we have the estimate:
\be
\norm{\psi(t_2)}{X^\sigma} - \norm{\psi(t_1)}{X^\sigma}  \leq c\int_{t_1}^{t_2} \norm{\psi(t)}{X^\sigma} dt.
\ee
for some $c\geq 0$.
\end{Theorem}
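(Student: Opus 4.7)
The plan is to commute $\p_x$ through \eq{nullwelnew} $n$ times, derive an energy identity at each level, and sum against the Gevrey weights that define $X^\sigma$ to obtain a differential inequality for $\norm{\psi(t)}{X^\sigma}$. Setting $\phi_n := \p_x^n \psi$ and applying Leibniz to $\p_x^{n+1}(x^2 \p_x \psi) = x^2 \phi_{n+2} + 2(n+1) x \phi_{n+1} + n(n+1) \phi_n$, the commuted evolution equation reads
\[
\p_t \phi_{n+1} + \p_x(x^2 \phi_{n+1}) + 2n x\, \phi_{n+1} + n(n+1) \phi_n = \p_x^n(W\psi).
\]
The key algebraic observation is that the commutator cost $n(n+1)\phi_n$ is matched precisely by the shift from the level-$n$ weight $\sigma^{2n}/[n!^2(n+1)!^2]$ to the level-$(n-1)$ weight, so the whole calculation is well adapted to the Gevrey norm.

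For the $\norm{\p_x \psi}{\sigma, 0, 0}$ piece we multiply by $2\phi_{n+1}$ and integrate over $I$; the $x^2$ degeneracy kills any contribution at $x=0$, while integration by parts combined with $\psi(1,t)=0$ yields a good boundary term $\phi_{n+1}(1,t)^2$ together with a dissipative bulk contribution $(4n+2) \int_0^1 x \phi_{n+1}^2\, dx$ growing linearly in $n$. For the weighted seminorms $\snorm{\p_x \psi}{\sigma, k, 0}^0$ with $k = 1, 2$ one pairs instead with $2(x/\sigma)^k \phi_{n+1}$ to obtain analogous identities, the $x$-weight in the multiplier producing higher weighted analogues of the boundary and dissipation terms. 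In parallel, evaluating the commuted equation at $x = 0$ produces the boundary ODE $\p_t \phi_{n+1}(0,t) = -n(n+1) \phi_n(0,t) + \p_x^n(W\psi)|_{x=0}$, which drives the evolution of the boundary seminorm $\ssnorm{\p_x \psi}{\sigma}^0$.

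Multiplying each level-$n$ identity by $\sigma^{2n}/[n!^2(n+1)!^2]$ (with extra factors of $n$, $n^2$ as appropriate for the higher seminorms) and summing over $n\geq 0$ reconstructs $\tfrac{d}{dt}\norm{\psi(t)}{X^\sigma}^2$ up to cross-terms and a forcing term. The cross-term $2n(n+1) \int_0^1 \phi_n \phi_{n+1}\, dx$ is evaluated using the elementary identity $\int_0^1 \phi_n \phi_{n+1}\, dx = \tfrac{1}{2}[\phi_n(1)^2 - \phi_n(0)^2]$; after the index shift $n \mapsto m+1$ its coefficient becomes $\sigma^2/[(m+1)(m+2)]$ times the natural level-$m$ weight, so the total contribution is bounded by a constant multiple of the boundary pieces of $\norm{\psi}{X^\sigma}^2$. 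The forcing $\p_x^n(W\psi)$ is dealt with by Theorem \ref{ProdThm}, which yields $\norm{W\psi}{\sigma, \cdot, \cdot} \lesssim \norm{\psi}{\sigma, \cdot, \cdot}$ uniformly across all three bulk seminorms and (by a direct variant) the boundary seminorm, using that $W$ is $(\sigma', 2)$-Gevrey with $\sigma' > \sigma$.

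Combining everything yields $\tfrac{d}{dt}\norm{\psi(t)}{X^\sigma}^2 \leq 2c\, \norm{\psi(t)}{X^\sigma}^2$ for some $c\geq 0$; dividing by $\norm{\psi(t)}{X^\sigma}$ and integrating from $t_1$ to $t_2$ produces the stated estimate. The main obstacle is a careful bookkeeping argument verifying that the algebraic factors of $n$, $n+1$ generated by commutation combine with the specific weights $\sigma^{2n}/[n!^2(n+1)!^2]$ to yield only absolutely summable contributions in each of the four components of the $X^\sigma$-norm simultaneously, with the cross-term absorption and the consistent treatment of boundary terms produced by the higher-weight multipliers being the most delicate steps.
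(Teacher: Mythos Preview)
Your proposal is correct and follows essentially the same approach as the paper: commute $\p_x$ through the equation $n$ times, multiply by the $(x/\sigma)^k \p_x\overline{\psi}^{(n)}$ multipliers (which the paper packages into the single multiplier $(1+nx/\sigma+n^2x^2/\sigma^2)\p_x\overline{\psi}^{(n)}$), sum against the Gevrey weights $\sigma^{2n}/[n!^2(n+1)!^2]$, and exploit the index shift $n\mapsto m+1$ to absorb the $n(n+1)\phi_n$ commutator cost. The one minor variation is your handling of the cross-term via the identity $\int_0^1\phi_n\phi_{n+1}\,dx=\tfrac12[\phi_n^2]_0^1$ rather than the paper's direct Cauchy--Schwarz bound $\lesssim |\phi_{n+1}|^2+n^2(n+1)^2|\phi_n|^2$; both close, but be aware that for the $k=1,2$ weighted multipliers your integration by parts also generates bulk contributions $\int_0^1 (x/\sigma)^{k-1}\phi_n^2\,dx$, not just boundary terms---these are nonetheless controlled by $\norm{\p_x\psi}{\sigma,0,0}^2$ after the same shift.
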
\label{GevProp}
\begin{proof}
We rewrite \eq{nullwelnew} as:
\[
 x^2 \frac{\p^2 \psi}{\p x^2}  + 2x \frac{\p \psi}{\p x} + \frac{\p^2 \psi}{\p x \p t} - W \psi = 0
 \]
Differentiating this equation $n$ times with respect to $x$, we find:
\ben{shifteddtd}
x^2 \frac{\p^2 \psi^{(n)}}{dx^2}  +  2(n+1)x \frac{\p \psi^{(n)}}{\p x} +  \frac{\p^2 \psi^{(n)}}{\p x \p t}+ n(n+1) \psi^{(n)} - \frac{\p^n (W\psi)}{\p x^n}= 0. 
\een
Where $\psi^{(n)}:= \p_x^n \psi$. We multiply this equation by $\left(1+\frac{nx}{\sigma }+ \frac{n^2x^2}{\sigma^2}\right)\p_x \overline{\psi}^{(n)}$ and take the real part:
\begin{align*}
&\left(1+\frac{nx}{\sigma}+ \frac{n^2x^2}{\sigma^2}\right) x^2 \frac{\p}{\p x} \frac{1}{2} \abs{\p_x \psi^{(n)}}^2  + 2(n+1)x\left(1+\frac{nx}{\sigma}+ \frac{n^2x^2}{\sigma^2}\right) \abs{\p_x\psi ^{(n)}}^2\\&\qquad  + \left(1+\frac{nx}{\sigma}+ \frac{n^2x^2}{\sigma^2}\right) \frac{\p}{\p t} \frac{1}{2} \abs{\p_x\psi ^{(n)}}^2\\&\qquad+ \Re\left[  \left(1+\frac{nx}{\sigma}+ \frac{n^2x^2}{\sigma^2}\right)\p_x \overline{\psi}^{(n)}\left(n(n+1){\psi}^{(n)} -\frac{\p^n (W\psi)}{\p x^n}\right) \right]= 0
\end{align*}
which we can re-write as:
\begin{align*}
& \frac{\p}{\p x} \left[ \frac{1}{2}  \left(1+\frac{nx}{\sigma}+ \frac{n^2x^2}{\sigma^2}\right)  x^2 \abs{\p_x\psi^{(n)}}^2   \right]  \\&
\qquad  + \left[ (2n+1)x +\left (2n^2 + \frac{n}{2}\right)\frac{x^2}{\sigma} + 2\frac{n^3 x^2}{\sigma^2} \right] \abs{\p_xu^{(n)}}^2\\&\qquad + \frac{\p}{\p t} \left[ \left(1+\frac{nx}{\sigma}+ \frac{n^2x^2}{\sigma^2}\right)  \frac{1}{2} \abs{\p_x\psi ^{(n)}}^2 \right] \\&\qquad = -\Re\left[  \left(1+\frac{nx}{\sigma}+ \frac{n^2x^2}{\sigma^2}\right)\p_x \overline{\psi}^{(n)}\left(n(n+1){\psi}^{(n)} -\frac{\p^n (W\psi)}{\p x^n}\right) \right]
\end{align*}
Noting that the second line is positive, we can estimate:
\begin{align}
&\nonumber \frac{\p}{\p x} \left[ \frac{1}{2}  \left(1+\frac{nx}{\sigma}+ \frac{n^2x^2}{\sigma^2}\right)  x^2 \abs{\p_x\psi^{(n)}}^2   \right]  \\&\qquad + \frac{\p}{\p t} \left[ \left(1+\frac{nx}{\sigma}+ \frac{n^2x^2}{\sigma^2}\right)  \frac{1}{2} \abs{\p_x\psi ^{(n)}}^2 \right]\label{tdest1} \\&\qquad \lesssim \left(1+\frac{nx}{\sigma}+ \frac{n^2x^2}{\sigma^2}\right)\left(\abs{\p_x \psi^{(n)}}^2+n^2(n+1)^2\abs{\psi^{(n)}}^2 + \abs{\frac{\p^n (W \psi) }{\p x^n} }^2   \right)\nonumber
\end{align}
Now, we multiply by $\frac{\sigma^{2n}}{n!^2 (n+1)!^2}$, integrate over $x\in [0,1]$ and sum over $n$ from $0$ to $\infty$. The first line gives a positive contribution after integration. The second line will give:
\[
\frac{d}{dt} \frac{1}{2} \left[  \norm{\p_x\psi}{\sigma, 0, 0}^2 + (\snorm{\p_x \psi}{\sigma, 1, 0}^0)^2 + (\snorm{\p_x \psi}{\sigma, 2, 0}^0)^2  \right].
\]
On the right-hand side, the first term is immediately controllable by $  \norm{\p_x \psi}{\sigma, 0, 0}^2 + (\snorm{\p_x \psi}{\sigma, 1, 0}^0)^2 + (\snorm{\p_x \psi}{\sigma, 2, 0}^0)^2$. For the second term we note that:
\begin{align*}
&\sum_{n=0}^\infty  \frac{\sigma^{2n}}{n!^2 (n+1)!^2}  n^2 (n+1)^2  \int_0^1 \left(1+\frac{nx}{\sigma}+ \frac{n^2x^2}{\sigma^2}\right) \abs{\psi^{(n)}}^2 \\
&\qquad =\sigma^2 \sum_{n=0}^\infty  \frac{\sigma^{2n}}{n!^2 (n+1)!^2} \int_0^1 \left(1+\frac{(n+1)x}{\sigma}+ \frac{(n+1)^2x^2}{\sigma^2}\right) \abs{\psi^{(n+1)}}^2 \\
&\qquad \lesssim \norm{\p_x \psi}{\sigma, 0, 0}^2 + (\snorm{\p_x \psi}{\sigma, 1, 0}^0)^2 + (\snorm{\p_x \psi}{\sigma, 2, 0}^0)^2 .
\end{align*}
where we have shifted the summation index in going from the first to the second line. Finally, we note that:
\begin{align*}
&\sum_{n=0}^\infty  \frac{\sigma^{2n}}{n!^2 (n+1)!^2}    \int_0^1 \left(1+\frac{nx}{\sigma}+ \frac{n^2x^2}{\sigma^2}\right) \abs{\frac{\p^n (W \psi) }{\p x^n} }^2 \\
&\qquad = \norm{W \psi}{\sigma, 0, 0}^2 + (\snorm{W \psi}{\sigma, 1, 0}^0)^2 + (\snorm{W \psi}{\sigma, 2, 0}^0)^2  \\
&\qquad \lesssim \norm{ \psi}{\sigma, 0, 0}^2 + (\snorm{ \psi}{\sigma, 1, 0}^0)^2 + (\snorm{ \psi}{\sigma, 2, 0}^0)^2 \\
&\qquad \lesssim \norm{\p_x \psi}{\sigma, 0, 0}^2 + (\snorm{\p_x \psi}{\sigma, 1, 0}^0)^2 + (\snorm{\p_x \psi}{\sigma, 2, 0}^0)^2,
\end{align*}
where we have made use of Theorem \ref{ProdThm} and further in the last line we have used the fact that $\psi$ vanishes at $x=0$ to control the lowest order terms by a Poincar\'e inequality. Putting these estimates together, we have:
\[
\frac{d}{dt} \left[  \norm{\p_x\psi}{\sigma, 0, 0}^2 + (\snorm{\p_x \psi}{\sigma, 1, 0}^0)^2 + (\snorm{\p_x \psi}{\sigma, 2, 0}^0)^2  \right] \lesssim \norm{\p_x \psi}{\sigma, 0, 0}^2 + (\snorm{\p_x \psi}{\sigma, 1, 0}^0)^2 + (\snorm{\p_x \psi}{\sigma, 2, 0}^0)^2.
\]
To complete the proof, we need to control the time derivative of the boundary part of the norm. For this, we note that evaluating the estimate \eq{tdest1} at $x=0$ gives:
\begin{align*}
 \frac{\p}{\p t} \left[  \frac{1}{2} \abs{\left. \p_x\psi ^{(n)}\right|_{x=1}}^2 \right] \lesssim \left(\abs{\left. \p_x \psi^{(n)}\right|_{x=1}}^2+n^2(n+1)^2\abs{\left.\psi^{(n)}\right|_{x=1}}^2 + \abs{\left.\frac{\p^n (W \psi) }{\p x^n}\right|_{x=1} }^2   \right)\nonumber
\end{align*}
Multiplying by  $\frac{\sigma^{2n+1}}{n!^2 (n+1)!^2}$ and summing over $n$, by a similar set of computations we can arrive at:
\[
\frac{d}{dt} (\ssnorm{\p_x \psi}{\sigma}^0)^2 \lesssim (\ssnorm{ \p_x \psi}{\sigma}^0)^2+  (\ssnorm{ \psi}{\sigma}^0)^2   \lesssim (\ssnorm{ \p_x \psi}{\sigma}^0)^2 + \norm{\psi}{\sigma, 0, 0}^2
\]
where the second inequality above follows from the fact that $\left. \psi \right|_{x=0}$ can be controlled by $\norm{\psi}{\sigma, 0, 0}$ using a trace estimate. Adding this to our previous estimate, we conclude that there exists a $c \geq 0$ such that:
\[
\frac{d}{dt} \norm{\psi}{X^\sigma}^2 \leq 2c  \norm{\psi}{X^\sigma}^2,
\]
whence the result follows.
\end{proof}
We can make a few observations about this proof. The first is that this proof is, in a sense, prototypical of the estimates that we shall subsequently establish for the elliptic problem. We also note that the fact that we multiply by $1/n!^2(n+1)!^2$ before summing is naturally forced on us by the equation. If, for example, we wished to multiply by $1/n!^2$, as would be appropriate to propagating a real-analytic norm, we would not be able to control the error term $n(n+1) \psi^{(n)} \overline{\p_x \psi^{(n)}}$. This is more than simply a failure of the estimate. Consider the case $W=0$, and let $a_n(t) = \p_x^n \psi (0,t)$. These functions obey the system of ODEs:
\[
\dot{a}_{n+1}(t) =- n(n+1) a_n(t)
\]
If we assume that $a_1(0) = 1$ and $a_n(0) = 0$ for $n>1$, we can solve this system to find:
\[
a_{n}(t) = n! (-t)^{n-1}
\]
This immediately tells us that even if the initial data away from $x=0$ is chosen to be $(\sigma, 1)-$Gevrey regular  for some $\sigma$ (i.e.\ real analytic with uniform radius of convergence $\sigma$), it will lose this regularity for all $t>\sigma^{-1}$. For $(\sigma, k)-$Gevrey regularity with $1<k<2$ we will not see finite time blow-up of the norm, however we will have super-exponential growth of the norms. The smallest value of $k$ for which we see (at worst) exponential growth, which is required in order to have a $C^0-$semigroup, is $k=2$. This is a manifestation of the Aretakis instability \cite{Aretakis:2011hc,Aretakis:2011ha,Aretakis:2012ei}, which has been extensively explored for finitely many derivatives.

As a final observation, we note that our estimate above may be thought of as arising from higher order versions of the $r^p$-estimates of \cite{DafRod}, which themselves can be thought of as what remains of the redshift effect in the $\kappa \to 0$ limit. We have thrown away some terms which will give us $x-$weighted integrated decay estimates. When studying the elliptic problem we shall need to keep track of these as we will make use of them to close our estimates, together with the fact that we can use the equation to improve $x-$weights at the expense of losing $t-$derivatives. That is to say we can control $\p_{x}\p_t\psi^{(n)}$ in terms of derivatives of $\psi$ only involving $x$ but which are multiplied by powers of $x$. In physical space this fact is not obviously useful, but after Laplace transforming it becomes valuable.

An important corollary of Theorem \ref{GevProp} follows immediately by standard results in semi-group theory \cite{Hille, RenardyRogers}.
\begin{Corollary}
For each $t\geq 0$ define an operator $\mathcal{S}(t):X^\sigma \to X^\sigma$ as follows:
\be
\mathcal{S}(t)\uppsi = \psi(t, \cdot),
\ee
where $\psi(t,x)$ is the unique solution to  \eq{nullwelnew} subject to Dirichlet boundary conditions at $x=1$ with the initial condition $\psi(0, \cdot) = \uppsi$. Then the family of operators $(\mathcal{S}(t))_{t\geq0}$ forms a $C^0-$semigroup acting on $X^\sigma$. The generator of $\mathcal{S}$ is the closed, densely defined, operator $\mathcal{A}:D(\mathcal{A}) \to X^\sigma$ defined by:
\[
\mathcal{A}u(x) :=  \int_x^1\left[ \p_\xi(\xi^2 \p_\xi u(\xi))- W(\xi) u(\xi) \right]d \xi,
\]
where $D(\mathcal{A})$ is the set of $u \in X^\sigma$ such that $\mathcal{A}u \in X^\sigma$. The resolvent $(\mathcal{A}-s)^{-1}:X^\sigma \to X^\sigma$ is well defined and holomorphic on $\Re(s)>c$, where $c$ is the constant in Theorem \ref{GevProp}. Finally, we may represent the solution  to  \eq{nullwelnew} subject to Dirichlet boundary conditions at $x=1$ with the initial condition $\psi(0, \cdot) = \uppsi$ through the Bromwich integral:
\[
\psi(t, \cdot) = \frac{1}{2\pi i}\int_{a-i \infty}^{a+i\infty} e^{st}  (\mathcal{A}-s)^{-1} \uppsi \, ds
\]
for $a>c$.
\end{Corollary}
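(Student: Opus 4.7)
The plan is to deduce this from the standard Hille--Yosida / Feller--Miyadera--Phillips theory of $C^0$-semigroups on Hilbert space, with Theorem \ref{GevProp} supplying the key a priori bound. Applying Grönwall to the differential inequality in that theorem gives $\norm{\psi(t)}{X^\sigma} \leq e^{ct}\norm{\psi(0)}{X^\sigma}$, and linearity then yields uniqueness of solutions in $X^\sigma$. For existence, I would first solve \eq{nullwelnew} classically for smooth data $\uppsi \in C^\infty(\overline{I})$ with $\uppsi(1)=0$ by rewriting the equation as a transport-type evolution equation for $\phi := \partial_x \psi$ along the characteristic field $\dot{x} = x^2$ (flowing from $x=1$ inward) coupled to the nonlocal zeroth-order term $\psi = -\int_x^1 \phi\,d\xi$; this can be handled by a fixed-point argument in $C([0,T]; H^k)$ for $k$ sufficiently large. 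Theorem \ref{GevProp} then shows that such solutions have finite $X^\sigma$ norm, and, as smooth functions are dense in $X^\sigma$, the map $\uppsi \mapsto \psi(t,\cdot)$ extends uniquely to a bounded operator $\mathcal{S}(t): X^\sigma \to X^\sigma$ with $\norm{\mathcal{S}(t)\uppsi}{X^\sigma} \leq e^{ct}\norm{\uppsi}{X^\sigma}$. The semigroup law $\mathcal{S}(t+s)=\mathcal{S}(t)\mathcal{S}(s)$ is inherited from uniqueness, and strong continuity at $t=0$ is classical on the dense subspace and propagates to all of $X^\sigma$ by the uniform bound.

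Next, I would identify the generator by direct computation on the dense core: for smooth $\uppsi$ as above, integrating \eq{nullwelnew} in $x$ from $x$ to $1$ and using the Dirichlet condition $\psi(1,t)=0$ (hence $\partial_t \psi(1,t)=0$) gives
\[
\partial_t \psi(t,x) = \int_x^1 \bigl[ \partial_\xi(\xi^2 \partial_\xi \psi) - W \psi \bigr] d\xi,
\]
whose value at $t=0$ is precisely $\mathcal{A}\uppsi$. Thus $\mathcal{A}$ agrees with the infinitesimal generator on a core and hence, being closed, on its full domain. The integral formula moreover makes sense pointwise for any $u \in X^\sigma$, so the maximal-domain characterisation $D(\mathcal{A}) = \{u \in X^\sigma : \mathcal{A} u \in X^\sigma\}$ is the natural one, and closedness follows from continuity of the right-hand side in $u$ together with the a priori bound.

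The holomorphy of $(\mathcal{A} - s)^{-1}$ on $\{\Re s > c\}$ and the resolvent formula $(\mathcal{A} - s)^{-1}\uppsi = -\int_0^\infty e^{-st} \mathcal{S}(t)\uppsi\,dt$ are then immediate consequences of Hille--Yosida applied to a $C^0$-semigroup with growth bound $c$, and the Bromwich representation of $\psi(t,\cdot)$ is the standard Laplace inversion formula for such a semigroup, valid for any $a > c$. The main obstacle I anticipate is really the classical existence step: Theorem \ref{GevProp} yields only an a priori estimate, so producing genuine classical solutions in $X^\sigma$ for a dense class of initial data requires a separate construction. The characteristic approach outlined above is technically routine but must be executed carefully in view of the singular point $x=0$ and the Gevrey-weighted nature of the norm on $X^\sigma$.
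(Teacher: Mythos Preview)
Your proposal is correct and takes essentially the same approach as the paper: the paper's entire proof is the single sentence ``An important corollary of Theorem \ref{GevProp} follows immediately by standard results in semi-group theory \cite{Hille, RenardyRogers},'' and what you have written is precisely a sketch of how those standard results apply here. You are in fact more careful than the paper, since you explicitly flag that Theorem \ref{GevProp} is only an a priori estimate and that a separate existence argument for a dense class of data is needed before Hille--Yosida can be invoked; the paper leaves this entirely implicit.
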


\section{The Laplace transformed operator}

In order to establish the main proposition, the main step will be the following result
\begin{Theorem}\label{mainthm}
For any open set $\Upsilon$ compactly contained in $\Omega_{\sigma}$ there exists a bounded operator $\mathcal{K}:Y^\sigma \to Y^\sigma$, such that $(\mathcal{L}^0_{s}+\mathcal{K})^{-1}: Y^\sigma \to X^\sigma$ exists and is holomorphic in $s\in \Omega$.
\end{Theorem}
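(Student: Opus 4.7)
The plan is to establish Theorem \ref{mainthm} in three stages: a uniform Gevrey-weighted a priori estimate for $\mathcal{L}^0_s$ on $\Upsilon$; a compactness/analytic-Fredholm argument producing a meromorphic inverse in a neighbourhood of $\overline{\Upsilon}$; and a finite-rank correction $\mathcal{K}$ that removes the finitely many poles in $\overline{\Upsilon}$.

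For the first stage I would mimic the structure of the proof of Theorem \ref{GevProp}, now with $s$ playing the role of $\partial_t$. Setting $w := \partial_x u$ and differentiating $\mathcal{L}^0_s u = f$ precisely $n$ times produces the hierarchy
\[
x^2 w^{(n+1)} + 2(n+1) x\, w^{(n)} + n(n+1)\, w^{(n-1)} + s\, w^{(n)} = f^{(n)}, \qquad n \geq 0,
\]
with the convention $w^{(-1)} := 0$. Multiplying by $\left(1+ \tfrac{nx}{\sigma}+ \tfrac{n^2 x^2}{\sigma^2}\right)\overline{w}^{(n)}$, taking real parts, integrating over $I$, and summing against $\sigma^{2n}/(n!^2(n+1)!^2)$ are exactly the operations that reproduce the weight system of $X^\sigma$ and $Y^\sigma$. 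Integration by parts on the principal term then generates positive bulk contributions controlling $\snorm{\partial_x u}{\sigma, 1, 0}^0$ and $\snorm{\partial_x u}{\sigma, 2, 0}^0$ together with a boundary contribution at $x=0$ controlling $\ssnorm{\partial_x u}{\sigma}^0$; the commutator term $n(n+1)w^{(n-1)}\overline{w}^{(n)}$ is handled by Cauchy--Schwarz and an index shift exactly as in Theorem \ref{GevProp}. The crucial new feature is the coupling $s\,|w^{(n)}|^2$: its real part supplies coercivity when $\Re(s)>0$ and must be absorbed against the bulk terms when $\Re(s)<0$, while the sign-indefinite $\Im(s)$-contribution must be dominated by the same coercive terms. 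The admissible balance between these is precisely what fixes the shape of $\Omega_\sigma$ depicted in Figure \ref{OmFig}.

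The output of the first stage is a uniform a priori estimate $\norm{u}{X^\sigma} \leq C_\Upsilon \norm{\mathcal{L}^0_s u}{Y^\sigma} + C'_\Upsilon \norm{u}{Y^\sigma}$ for $s\in\Upsilon$. Since $X^\sigma \hookrightarrow Y^\sigma$ is compact by Theorem \ref{compact}, Peetre's lemma gives closed range and finite-dimensional kernel for $\mathcal{L}^0_s$, and an analogous estimate for the formal adjoint yields finite-dimensional cokernel, so $\mathcal{L}^0_s$ is Fredholm. Direct inversion at a point with $\Re(s)$ very large (where the $s\,\partial_x$ term is dominant, making $\mathcal{L}^0_s$ a compact perturbation of an invertible operator) pins the index to zero. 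Since $s\mapsto \mathcal{L}^0_s$ is polynomial of degree one with bounded coefficient $\partial_x$ (Theorem \ref{DThm}), the analytic Fredholm theorem provides a meromorphic inverse $(\mathcal{L}^0_s)^{-1}: Y^\sigma \to X^\sigma$ on an open neighbourhood of $\overline{\Upsilon}$, with residues of finite rank and only finitely many poles $s_1,\ldots,s_N \in \overline{\Upsilon}$. A standard pole-cancellation argument from meromorphic Fredholm theory --- constructing $\mathcal{K}$ as a finite-rank bounded operator that matches a basis of $\ker \mathcal{L}^0_{s_j}$ to a complement of the range of $\mathcal{L}^0_{s_j}$ at each $s_j$ simultaneously --- produces the desired correction. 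Since $\mathcal{K}$ is finite rank, $\mathcal{L}^0_s + \mathcal{K}$ remains an analytic family of index-zero Fredholm operators; by construction it is invertible at every $s_j$, and a second application of the analytic Fredholm theorem yields holomorphy of the inverse on all of $\Upsilon$.

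The principal obstacle is the first stage. The combinatorial bookkeeping needed to match the summed commutator $n(n+1)w^{(n-1)}\overline{w}^{(n)}$ and the $x$-weighted integration-by-parts terms against the three bulk Gevrey semi-norms that define $X^\sigma$ --- and, in a parallel chain using the boundary weights $\sigma^{2n+1}/(n!^2(n+1)!^2)$, against the boundary semi-norm --- is delicate. More seriously, extracting coercivity from the $s$-coupling uniformly over $\Omega_\sigma$ forces one to treat $\Re(s)>0$ and $\Re(s)<0$ asymmetrically, and the parabolic boundary $\sigma|\Im(s)|\sim\Re(s)^2/4$ indicated in Figure \ref{OmFig} should emerge precisely from the interplay of the sign-indefinite $\Im(s)$-contribution with the available coercive bulk control.
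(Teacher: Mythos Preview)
Your Stage 1 does not go through as stated. Multiplying the commuted equation by $\bigl(1+\tfrac{nx}{\sigma}+\tfrac{n^2x^2}{\sigma^2}\bigr)\overline{w}^{(n)}$ and summing produces bulk coercivity only for the $x$--weighted semi-norms $\snorm{\partial_x u}{\sigma,1,0}$ and $\snorm{\partial_x u}{\sigma,2,0}$; the principal part $x^2\partial_x^2$ degenerates at $x=0$ and gives no control of the unweighted $\snorm{\partial_x u}{\sigma,0,0}$. Meanwhile the commutator $n(n+1)w^{(n-1)}\overline{w}^{(n)}$, after Cauchy--Schwarz and the index shift you cite from Theorem~\ref{GevProp}, contributes a term of order $\sigma\bigl(\snorm{\partial_x u}{\sigma,0,0}\bigr)^2$ on the wrong side. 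This is part of $\norm{u}{X^\sigma}^2$, not of $\norm{u}{Y^\sigma}^2$, so it cannot be absorbed into the compact error term you propose; no Peetre-type estimate $\norm{u}{X^\sigma}\leq C\norm{\mathcal L_s^0 u}{Y^\sigma}+C'\norm{u}{Y^\sigma}$ follows from this multiplier alone. The paper closes this gap with a \emph{second} multiplier estimate (Theorem~\ref{SecEst}): one rearranges the commuted equation as $s\,\partial_x u^{(n)}=-(x^2\partial_x^2 u^{(n)}+\cdots)$, multiplies by $\overline{s}\,\overline{\partial_x u^{(n)}}$, and obtains control of $\snorm{\partial_x u}{\sigma,0,0}$ back in terms of the weighted norms, with constants involving $|s|$ and $\Re(s)/|s|$. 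Adding this to the first estimate is exactly what produces $\Omega_\sigma^2\cup\Omega_\sigma^3$; the region is cut out by positivity of a quadratic form in the three semi-norms, not by a sign-indefinite $\Im(s)$ contribution in the bulk as you suggest ($\Im(s)$ enters only through $|s|$ and through the boundary Lemma~\ref{Best}, which fixes $\Omega_\sigma^1$).

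Your Stages 2--3 also diverge substantially from the paper and carry their own risks. The paper never runs an adjoint argument on the Gevrey scale (the dual of $Y^\sigma$ is awkward) and never invokes Peetre's lemma. Instead it chooses $\mathcal K$ \emph{first}, as the bounded (not finite-rank) operator $\mathcal Ku=-N(N+1)u+\lambda\sum_{i=0}^{N-1}\frac{(x-1)^i}{i!}u^{(i+1)}(1)$; the shift $-N(N+1)$ tames the commutator for $n\geq N$, while the $\lambda$--term makes a finite linear system at $x=1$ invertible, controlling the low derivatives. Existence is then obtained not from large $\Re(s)$ but by regularising the principal part to $(\kappa x+x^2)\partial_x^2$: for $\kappa>0$ the Type~I theory furnishes solutions, and the estimates above are arranged to be uniform in $\kappa\geq 0$, so one passes to the limit $\kappa\to 0$ (Theorem~\ref{L0Thm}). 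This yields \emph{direct} invertibility of $\mathcal L_s^0+\mathcal K$ on all of $\Upsilon$, with no poles to cancel.
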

\begin{proof}
This follows immediately from Theorem \ref{L0Thm} below.
\end{proof}
Once we have established this, Proposition \ref{mainprop} follows by the analytic Fredholm theorem:
\begin{proof}[Proof of Proposition \ref{mainprop}]
By a standard computation, for $f \in Y^\sigma$, $\mathcal{L}^0_s u = f$ has a solution $u \in X^\sigma$ if and only if $(\iota - (\mathcal{L}^0_s + \mathcal{K})^{-1} \mathcal{K})u = (\mathcal{L}^0_s + \mathcal{K})^{-1}f$. Let $E:= (\mathcal{L}^0_s + \mathcal{K})^{-1} \mathcal{K}$. By assumption $E:Y^\sigma \to Y^\sigma$ is holomorphic on $\Upsilon$ and compact, since it maps $Y^\sigma$ to $X^\sigma$ and $X^\sigma \cc Y^\sigma$  by Theorem \ref{compact} so the result follows for $\mathcal{L}^0_s$ by the analytic Fredholm theorem \cite{RenardyRogers}.  By Corollary \ref{compact2}, $\mathcal{L}_s$ is a compact perturbation of $\mathcal{L}^0_s$, and again standard Fredholm theory provides the concusion.
\end{proof}

The choice of $\mathcal{K}$ we shall make in order to prove Theorem \ref{mainthm} is a little unusual. We shall take:
\[
\mathcal{K}u(x) := - N (N+1)u(x) + \lambda \sum_{i=0}^{N-1} \frac{(x-1)^i}{i!} u^{(i+1)}(1).
\]
Here $N \in \N$ and $\lambda \in \R$ are to be chosen later to be sufficiently large, depending on the set $\Upsilon$.  We shall establish the invertibility of $\mathcal{L}^0_s + \mathcal{K}$ by considering the $\kappa \to 0^+$ limit of $L_{s, \kappa} + \mathcal{K}$. As a consequence, we begin by obtaining estimates for solutions to the equation:
\ben{shiftedeq}
\frac{d}{dx}\left( (\kappa x + x^2) \frac{d u}{dx} \right) + s \frac{du}{dx} -N(N+1) u + \lambda \sum_{i=0}^{N-1} \frac{(x-1)^i}{i!} u^{(i+1)}(1)= f, \qquad u(1) = 0,
\een
which are uniform in $\kappa \geq 0$.

\begin{Lemma}\label{GMest} Suppose that $\kappa \geq 0$, $N, N', M$ are integers with $0\leq N\leq N'\leq M$, $\sigma >0$,  and that $u \in C^\infty(\overline{I})$ satisfies \eq{shiftedeq}. Then we have the estimate:
\begin{align*}
&\kappa \left[ \left( \snorm{\p_x u}{\sigma, 0, 1}^{N', M}\right)^2 + \left( \snorm{\p_x u}{\sigma, 1, 1}^{N', M}\right)^2 \right] + \left(2 \sigma+\Re(s) \right) \left( \snorm{\p_x u}{\sigma, 1, 0}^{N', M}\right)^2 + 2\sigma \left( \snorm{\p_x u}{\sigma, 2, 0}^{N', M}\right)^2  \\
&\qquad \leq \left( \textstyle\frac{\sigma}{2(N'+1)} -\Re(s)\right)\left(\snorm{\p_x u}{\sigma, 0, 0}^{N', M} \right)^2 +{\textstyle \frac{\sigma}{2 N' (N'+1)} }\left( \ssnorm{\p_x u}{\sigma}^{N', M} \right)^2 \\
&\quad \qquad + \left( \snorm{\p_x u}{\sigma, 0, 0}^{N', M} + \snorm{\p_x u}{\sigma, 1, 0}^{N', M}\right) \left( \snorm{f}{\sigma, 0, 0}^{N', M} + \snorm{f}{\sigma, 1, 0}^{N', M}\right) \\
&\qquad \qquad + {\textstyle\frac{\sigma^{2N'}}{N'!^2 (N'+1)!^2} \frac{N'}{2 \sigma} \left[  N'(N'+1)-N(N+1)  \right] }\int_0^1 \abs{u^{(N')}}^2 dx \\
&\qquad \qquad +  \frac{1}{2}  {\textstyle\frac{\sigma^{2N'}}{N'!^2 (N'+1)!^2}} \left[  N'(N'+1)-N(N+1)  \right]  \abs{u^{(N')}(0)}^2
\end{align*}
\end{Lemma}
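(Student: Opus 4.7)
The plan is to adapt the energy argument from the proof of Theorem \ref{GevProp} to this elliptic setting. First I would differentiate \eq{shiftedeq} $n$ times in $x$ for $n\geq N'$. Since the polynomial correction $\lambda\sum_{i=0}^{N-1}\frac{(x-1)^i}{i!}u^{(i+1)}(1)$ has degree $N-1$ and $n\geq N'\geq N$, this term is annihilated, leaving
\[
(\kappa x + x^2)\,u^{(n+2)} + \bigl[(n+1)(\kappa + 2x) + s\bigr]\,u^{(n+1)} + \bigl[n(n+1) - N(N+1)\bigr]\,u^{(n)} = f^{(n)}.
\]
I would then test this against a multiplier $\omega_n(x)\,\overline{u^{(n+1)}}$, where $\omega_n(x)$ is a polynomial in $x/\sigma$ adapted to the three Gevrey seminorms $\snorm{\cdot}{\sigma,k,l}^{N',M}$ appearing on the LHS of the lemma, take the real part, integrate over $I$, multiply by the combinatorial weight $\sigma^{2n}/(n!^2(n+1)!^2)$, and sum $n$ from $N'$ to $M$.

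Each term then contributes in a fairly standard way. The top-order $(\kappa x + x^2)u^{(n+2)}\overline{u^{(n+1)}} = \tfrac{1}{2}(\kappa x + x^2)\p_x\abs{u^{(n+1)}}^2$, integrated by parts, produces no boundary contribution at $x=0$ (since $\kappa x + x^2$ vanishes there) and only controllable contributions at $x=1$. The coercive quantity $(n+1)(\kappa + 2x)\abs{u^{(n+1)}}^2$ is the principal source of the positive LHS: the $\kappa(n+1)$-part, after summation, produces the $\kappa$-weighted $\snorm{\p_x u}{\sigma,0,1}^{N',M}$ and $\snorm{\p_x u}{\sigma,1,1}^{N',M}$ seminorms from the weights $1$ and $x/\sigma$, while the $2(n+1)x$-part, after a summation shift $n\mapsto n-1$ (which converts $\sigma^{2n}/(n!^2(n+1)!^2)\cdot 2(n+1)x$ into a factor of order $2\sigma\cdot\sigma^{2n-2}/(n-1)!^2 n!^2$), produces the $\snorm{\p_x u}{\sigma,1,0}^{N',M}$ and $\snorm{\p_x u}{\sigma,2,0}^{N',M}$ seminorms with the stated constant $2\sigma$. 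The term $su^{(n+1)}\overline{u^{(n+1)}}$ supplies the $\Re(s)$ contribution to the $(2\sigma+\Re(s))$ and $(\sigma/(2(N'+1))-\Re(s))$ coefficients.

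The delicate piece is $[n(n+1) - N(N+1)]u^{(n)}\overline{u^{(n+1)}} = \tfrac{1}{2}[n(n+1)-N(N+1)]\p_x\abs{u^{(n)}}^2$. Integration by parts produces a volume contribution proportional to $\abs{u^{(n)}}^2$ and a boundary contribution at $x=0$ proportional to $\abs{u^{(n)}(0)}^2$, both with the factor $[n(n+1)-N(N+1)]$. A summation shift $n\mapsto n+1$ (together with the combinatorial weight) packages these into, respectively, the $\snorm{\p_x u}{\sigma,0,0}^{N',M}$ and $\ssnorm{\p_x u}{\sigma}^{N',M}$ terms on the RHS with coefficients $\sigma/(2(N'+1))$ and $\sigma/(2N'(N'+1))$; the original endpoint at $n=N'$ that is missing from the shifted range produces exactly the two correction terms on the last two lines of the statement, whose prefactors $\tfrac{\sigma^{2N'}}{N'!^2(N'+1)!^2}[N'(N'+1)-N(N+1)]$ are precisely the telescoping remainders. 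Finally $f^{(n)}\overline{u^{(n+1)}}$ is handled by Cauchy--Schwarz to yield the $(\snorm{f}{\sigma,0,0}^{N',M}+\snorm{f}{\sigma,1,0}^{N',M})$ factor.

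The main obstacle will be the careful combinatorial bookkeeping needed to make the constants $2\sigma$, $\sigma/(2(N'+1))$, and $\sigma/(2N'(N'+1))$ come out as stated. This requires choosing the weights in $\omega_n$ so that the various IBP remainders combine cleanly, and applying the Cauchy--Schwarz-type inequality $(\snorm{u}{\sigma,1,l}^N)^2 \leq \snorm{u}{\sigma,0,l}^N\snorm{u}{\sigma,2,l}^N$ together with the elementary estimate $N^{1/2}\snorm{\cdot}{\sigma,k,0}^{N,M}\leq\snorm{\cdot}{\sigma,k,1}^{N,M}$ from Section 2 to absorb subleading cross-terms. Heuristically, this is the spatial analogue of the $x$-weighted higher-order $r^p$-type estimates alluded to in the remarks following Theorem \ref{GevProp}, now sharpened to give precise quantitative control that survives the $\kappa\to 0$ limit and allows later application of the Fredholm alternative in Theorem \ref{mainthm}.
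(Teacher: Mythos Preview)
Your plan is correct and is essentially the paper's proof: differentiate $n$ times (killing the polynomial correction), multiply by the explicit weight $(1+\tfrac{nx}{\sigma})\,\overline{\partial_x u^{(n)}}$, take the real part, integrate over $I$, weight by $\sigma^{2n}/(n!^2(n+1)!^2)$, and sum $n=N'$ to $M$. Two small corrections: first, no summation shift is needed for the coercive $2(n+1)x$ piece --- once you write $x = \sigma\cdot(x/\sigma)$, the bulk terms $2nx$ and $2n^2 x^2/\sigma$ already match $2\sigma(\snorm{\partial_x u}{\sigma,1,0}^{N',M})^2$ and $2\sigma(\snorm{\partial_x u}{\sigma,2,0}^{N',M})^2$ term by term (the harmless excess factors like $(n+1)/n\geq 1$ are simply dropped). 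Second, neither the interpolation $(\snorm{\cdot}{\sigma,1,l})^2\leq \snorm{\cdot}{\sigma,0,l}\snorm{\cdot}{\sigma,2,l}$ nor the $N^{1/2}$ estimate is used in this lemma at all; it is a clean energy identity with no cross-terms to absorb, and those tools enter only later when this lemma is combined with the separate estimate of Theorem~\ref{SecEst} to close in Theorem~\ref{ClosedEst}.
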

\begin{proof}
We rewrite \eq{shiftedeq} as:
\[
(\kappa x + x^2) \frac{d^2 u}{dx^2}  + (\kappa + 2x +s) \frac{du}{dx} -N(N+1) u+ \lambda \sum_{i=0}^{N-1} \frac{(x-1)^i}{i!} u^{(i+1)}(1)  = f 
\]
Differentiating this equation $n\geq N$ times with respect to $x$, we find:
\ben{shiftedd}
(\kappa x + x^2) \frac{d^2 u^{(n)}}{dx^2}  + (\kappa(n+1) + 2(n+1)x +s) \frac{du^{(n)}}{dx} + \left(n(n+1)-N(N+1)\right) u^{(n)}= f^{(n)}. 
\een
We multiply this equation by $\left(1+\frac{nx}{\sigma}\right)\p_x \overline{u}^{(n)}$ and take the real part:
\begin{align*}
&\left(1+\frac{nx}{\sigma}\right)(\kappa x + x^2) \frac{d}{dx} \frac{1}{2} \abs{\p_xu^{(n)}}^2  + \left(1+\frac{nx}{\sigma}\right)(\kappa(n+1) + 2(n+1)x +\Re(s)) \abs{\p_xu^{(n)}}^2\\& + (n(n+1)-N(N+1)) \left(1+\frac{nx}{\sigma}\right)  \frac{d}{dx} \frac{1}{2} \abs{u^{(n)}}^2 =   \left(1+\frac{nx}{\sigma}\right) \Re(f^{(n)}\p_x \overline{u}^{(n)})
\end{align*}
which we can re-write as:
\begin{align*}
& \frac{d}{dx} \left[ \frac{1}{2}  \left(1+\frac{nx}{\sigma}\right) (\kappa x + x^2) \abs{\p_xu^{(n)}}^2 + \frac{1}{2} (n(n+1)-N(N+1)) \abs{u^{(n)}}^2 \left(1+\frac{nx}{\sigma}\right)   \right] \\&
\qquad + \kappa \left[ n+\frac{1}{2} + \frac{n^2 x}{\sigma} \right] \abs{\p_xu^{(n)}}^2 + \left[ (2n+1)x +\left (2n^2 + \frac{n}{2}\right)\frac{x^2}{\sigma} \right] \abs{\p_xu^{(n)}}^2 \\&\qquad = \frac{n}{2 \sigma} \left[  n(n+1)-N(N+1)  \right] \abs{u^{(n)}}^2  - \left(1+\frac{nx}{\sigma}\right) \Re(s) \abs{\p_xu^{(n)}}^2 \\
&\qquad + \left(1+\frac{nx}{\sigma}\right)\Re(f^{(n)}\p_x \overline{u}^{(n)})
\end{align*}
Integrating this identity in $x$ over the interval $I$, and assuming that $n \geq N$, we deduce:
\begin{align}
& \int_0^1 \left[ \kappa \left( n + \frac{n^2 x}{\sigma} \right) + 2 \left (nx + \frac{n^2 x^2}{\sigma} \right)  \right] \abs{\p_xu^{(n)}}^2 dx\nonumber \\&\qquad \qquad \leq  \frac{n}{2 \sigma} \left[  n(n+1)-N(N+1)  \right] \int_0^1 \abs{u^{(n)}}^2 dx\nonumber  \\
& \qquad \qquad  \quad  - \Re(s) \int_0^1 \left[ 1  + \frac{n x}{\sigma}  \right] \abs{\p_xu^{(n)}}^2 dx \label{ineq1} \\
& \qquad \qquad  \quad +\left( \int_0^1 \left[ 1  + \frac{n x}{\sigma}  \right] \abs{\p_xu^{(n)}}^2 dx\right)^\frac{1}{2}\left( \int_0^1 \left[ 1  + \frac{n x}{\sigma}  \right] \abs{f^{(n)}}^2 dx\right)^\frac{1}{2}\nonumber \\
& \qquad \qquad  \quad + \frac{1}{2}  \left[  n(n+1)-N(N+1)  \right]  \abs{u^{(n)}(0)}^2 \nonumber
\end{align}
We assume that $N'\leq n \leq M$ and we multiply by $\frac{\sigma^{2n}}{n!^2 (n+1)!^2}$, sum over $n$ from $N'$ to $M$, then bound the terms on the right hand side one at a time. Firstly:
\begin{align*}
&\sum_{n=N'}^M{\textstyle\frac{\sigma^{2n}}{n!^2 (n+1)!^2} \frac{n}{2 \sigma} \left[  n(n+1)-N(N+1)  \right] }\int_0^1 \abs{u^{(n)}}^2 dx\\&\qquad \qquad =\sum_{n=N'}^M {\textstyle\frac{\sigma}{2} \left( \frac{1}{n+1} - \frac{N(N+1)}{n(n+1)^2}  \right) {\textstyle\frac{\sigma^{2(n-1)}}{n!^2 (n-1)!^2} } }\int_0^1 \abs{u^{(n)}}^2 dx \\
&\qquad \qquad \leq \frac{\sigma}{2(N+1)} \left(\snorm{\p_x u}{\sigma, 0, 0}^{N', M} \right)^2 \\&\qquad \qquad \quad+ {\textstyle\frac{\sigma^{2N'}}{N'!^2 (N'+1)!^2} \frac{N'}{2 \sigma} \left[  N'(N'+1)-N(N+1)  \right] }\int_0^1 \abs{u^{(N')}}^2 dx
\end{align*}
Continuing, we see:
\begin{align*}
& - \Re(s) \sum_{n=N'}^M{\textstyle\frac{\sigma^{2n}}{n!^2 (n+1)!^2} } \int_0^1 \left[ 1  + \frac{n x}{\sigma}  \right] \abs{\p_xu^{(n)}}^2 dx\\&\qquad  \leq -\Re(s) \left(\snorm{\p_x u}{\sigma, 0, 0}^{N', M} \right)^2 -\Re(s) \left(\snorm{\p_x u}{\sigma, 1, 0}^{N', M} \right)^2
\end{align*}
and similarly:
\begin{align*}
&\sum_{n=N'}^M {\textstyle\frac{\sigma^{2n}}{n!^2 (n+1)!^2} } \left( \int_0^1 \left[ 1  + \frac{n x}{\sigma}  \right] \abs{\p_xu^{(n)}}^2 dx\right)^\frac{1}{2}\left( \int_0^1 \left[ 1  + \frac{n x}{\sigma}  \right] \abs{f^{(n)}}^2 dx\right)^\frac{1}{2} \\&\qquad \leq \left( \snorm{\p_x u}{\sigma, 0, 0}^{N', M} + \snorm{\p_x u}{\sigma, 1, 0}^{N', M}\right) \left( \snorm{f}{\sigma, 0, 0}^{N', M} + \snorm{f}{\sigma, 1, 0}^{N', M}\right)
\end{align*}
Finally, the boundary term can be estimated by:
\begin{align*}
&\frac{1}{2}  \sum_{n=N'}^M {\textstyle\frac{\sigma^{2n}}{n!^2 (n+1)!^2}} \left[  n(n+1)-N(N+1)  \right]  \abs{u^{(n)}(0)}^2\\ &\qquad \qquad  = \sum_{n=N'}^M{\textstyle \frac{\sigma}{2}  \left( \frac{1}{n(n+1)} - \frac{N(N+1)}{n^2(n+1)^2)}\right) \frac{\sigma^{2n-1}}{n!^2 (n-1)!^2}}\abs{u^{(n)}(0)}^2 \\
 &\qquad \qquad  \leq \frac{\sigma}{2 N' (N'+1)}  \left( \ssnorm{\p_xu}{\sigma}^{N', M} \right)^2 \\
 &\qquad \qquad  \qquad +  \frac{1}{2}  {\textstyle\frac{\sigma^{2N'}}{N'!^2 (N'+1)!^2}} \left[  N'(N'+1)-N(N+1)  \right]  \abs{u^{(N')}(0)}^2 
\end{align*}
\end{proof}

Next, we gain some control  of the boundary values of $u$ at $x=0$ in terms of the data. We first define:
\[
\Omega^1_{\sigma} := \{ s\in \C | 0<\sigma <\abs{\Im(s)}, \Re(s) \leq 0\} \cup  \{ s\in \C | 0<\sigma <\abs{s}, \Re(s) > 0\}.
\]
\begin{Lemma} \label{Best}
Suppose that $\kappa \geq 0$, $0\leq N\leq M$ are integers, $s \in \Omega^1_{\sigma}$,  and that $u \in C^\infty(\overline{I})$ satisfies \eq{shiftedeq}. Then we have the estimate:
\be
\ssnorm{\p_xu}{\sigma}^{N, M}  \leq \frac{1}{C_{s, \sigma}}  \ssnorm{f}{\sigma}^{N, M}.
\ee
Where $C_{s, \sigma} = \textrm{min}\left\{\abs{\Im(s)}-\sigma, \abs{s} - \sigma \right\}$.
\end{Lemma}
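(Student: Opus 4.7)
The core idea is that the degeneracy of the leading coefficient $\kappa x + x^2$ at $x=0$ collapses the differentiated ODE into a scalar two-term recurrence for $u^{(n)}(0)$, so the boundary estimate reduces to a weighted $\ell^2$ bound for a first-order discrete system.

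I would begin by differentiating \eqref{shiftedeq} $n$ times (taking $n\ge N$) and evaluating at $x=0$. The $(\kappa x + x^2)u^{(n+2)}$ and $2(n+1)x\,u^{(n+1)}$ contributions drop out, and the polynomial correction $\lambda \sum_{i=0}^{N-1}\frac{(x-1)^i}{i!} u^{(i+1)}(1)$ vanishes identically under $n\ge N$ derivatives since it has degree $N-1$. This yields the scalar relation
\[
(\kappa(n+1)+s)\,u^{(n+1)}(0) + \bigl(n(n+1)-N(N+1)\bigr)u^{(n)}(0) = f^{(n)}(0), \qquad n\ge N.
\]
Rescaling by $\sigma^{n+1/2}/(n!(n+1)!)$ and setting $c_m := \sigma^{m-1/2} u^{(m)}(0) / ((m-1)!\,m!)$ and $d_n := \sigma^{n+1/2} f^{(n)}(0)/(n!\,(n+1)!)$, the recurrence takes the clean form
\[
(\kappa(n+1)+s)\,c_{n+1} + \sigma\, \alpha_n\, c_n = d_n, \qquad \alpha_n := 1 - \frac{N(N+1)}{n(n+1)},
\]
and the critical observation is that $\alpha_N = 0$ and $0\le \alpha_n < 1$ for all $n\ge N$. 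In these variables $\bigl(\ssnorm{\partial_xu}{\sigma}^{N,M}\bigr)^2 = A := \sum_{n=N}^{M}\abs{c_{n+1}}^2$ and $\ssnorm{f}{\sigma}^{N,M} = D := \bigl(\sum_{n=N}^M \abs{d_n}^2\bigr)^{1/2}$.

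The second step is an energy-type estimate. Multiply the recurrence by $\overline{c_{n+1}}$ and sum over $n=N,\ldots,M$. The right-hand side is bounded by $D\,A^{1/2}$ via Cauchy--Schwarz. The cross term satisfies
\[
\Bigl|\sigma\sum_{n=N}^M \alpha_n c_n\overline{c_{n+1}}\Bigr| = \Bigl|\sigma\sum_{n=N+1}^M \alpha_n c_n\overline{c_{n+1}}\Bigr| \le \sigma\, A,
\]
where the first equality uses $\alpha_N=0$ and the inequality uses $0\le\alpha_n<1$, Cauchy--Schwarz, and the bound $\sum_{n=N+1}^M\abs{c_n}^2,\sum_{n=N+1}^M\abs{c_{n+1}}^2 \le A$. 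For the principal term, write $\sum_n (\kappa(n+1)+s)\abs{c_{n+1}}^2 = P + i\,\Im(s)\,A$, where $P = \sum_n (\kappa(n+1)+\Re(s))\abs{c_{n+1}}^2$. In the regime $\Re(s)\le 0$ (where $\abs{\Im(s)}>\sigma$), the inequality $|P + i\,\Im(s)A| \ge \abs{\Im(s)}\,A$ is immediate from taking imaginary parts; in the regime $\Re(s)>0$ (where $\abs{s}>\sigma$) one uses $P\ge \Re(s)\,A \ge 0$ to conclude $|P + i\,\Im(s)A| \ge \sqrt{\Re(s)^2 + \Im(s)^2}\,A = \abs{s}A$. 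Combining these yields $C_{s,\sigma}\,A \le D\,A^{1/2}$ in either case, and dividing by $A^{1/2}$ gives the claimed bound.

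The main subtlety to watch is the precise absorption of the cross term into the principal lower bound with no loss: this works \emph{exactly} because the shift $N(N+1)$ built into $\mathcal{K}$ forces $\alpha_N = 0$, so that the cross-sum involves only $c_n$ for $n\ge N+1$ and may be bounded by $\sigma A$ without any stray contribution from $c_N$ (which would otherwise demand a separate estimate). This is the same tuning that caused the boundary term in Lemma \ref{GMest} to acquire the small prefactor $\sigma/(2N'(N'+1))$, and it is what allows the constant in the final estimate to be exactly $C_{s,\sigma}$ rather than something depending on $N$.
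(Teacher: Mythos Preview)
Your proposal is correct and follows essentially the same approach as the paper: evaluate the $n$-times differentiated equation at $x=0$, multiply by the conjugate of $u^{(n+1)}(0)$, sum with the Gevrey weights, and absorb the cross term using $\alpha_N=0$ and $0\le\alpha_n<1$. The only organizational difference is that the paper splits the two regimes with different multipliers (taking the imaginary part after multiplying by $\overline{\partial_x u^{(n)}(0)}$ when $\Re(s)\le 0$, and the real part after multiplying by $\overline{s\,\partial_x u^{(n)}(0)}$ when $\Re(s)>0$), whereas you keep the complex sum intact and extract the lower bound $|P+i\,\Im(s)A|\ge |\Im(s)|A$ or $\ge |s|A$ at the end; both give exactly the same constants.
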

\begin{proof}
We first assume $0<\sigma <\abs{\Im(s)}, \Re(s) \leq 0$. Returning to \eq{shiftedd}, we note that the equation simplifies at $x=0$:
\[
\left( \kappa (n+1) + s\right) \p_x u^{(n)} (0) + \left[ n (n+1) - N (N+1)\right]u^{(n)} (0) = f^{(n)}(0).
\]
Multiplying by $\overline{\p_x u^{(n)}(0)}$ and taking the imaginary part yields:
\[
\abs{\Im(s)} \abs{\p_x u^{(n)} (0)}^2  \leq \left[ n (n+1) - N (N+1)\right] \abs{u^{(n)} (0)}\abs{\p_x u^{(n)} (0)} + \abs{\p_x u^{(n)} (0)}\abs{f^{(n)} (0)}
\]
We multiply by $\frac{\sigma^{2n+1}}{n!^2 (n+1)!^2}$, sum over $n$ from $N$ to $M$, then bound the terms on the right hand side, assuming $N\leq n \leq M$. We have:
\begin{align*}
&\sum_{n=N}^M{\textstyle \frac{\sigma^{2n+1}}{n!^2 (n+1)!^2} }\left[ n (n+1) - N (N+1)\right] \abs{u^{(n)} (0)}\abs{\p_x u^{(n)} (0)}\\ \qquad &=\sum_{n=N}^M \left( \textstyle\frac{\sigma^{n+\frac{1}{2}}}{n! (n+1)! } \abs{\p_x u^{(n)} (0)} \right)  \left( \textstyle \frac{\sigma^{n-\frac{1}{2}}}{(n-1)! n! } \abs{u^{(n)} (0)} \right)  \sigma \left( 1- \frac{N(N+1)}{n(n+1)} \right) \\
&\leq \sigma \left( \ssnorm{\p_xu}{\sigma}^{N, M} \right)^2
\end{align*}
Similarly,
\[
\sum_{n=N}^M{\textstyle \frac{\sigma^{2n+1}}{n!^2 (n+1)!^2} } \abs{\p_x u^{(n)} (0)}\abs{f^{(n)} (0)} \leq  \ssnorm{\p_xu}{\sigma}^{N, M}  \ssnorm{f}{\sigma}^{N, M}
\]
We deduce that:
\[
\abs{\Im(s)} \left( \ssnorm{\p_xu}{\sigma}^{N, M} \right)^2   \leq \sigma \left( \ssnorm{\p_xu}{\sigma}^{N, M} \right)^2+\ssnorm{\p_xu}{\sigma}^{N, M}  \ssnorm{f}{\sigma}^{N, M}.
\]
and hence, we conclude that if $\abs{\Im(s)} -\sigma >0$, then:
\[
\ssnorm{\p_xu}{\sigma}^{N, M}  \leq \frac{1}{\abs{\Im(s)} -\sigma}  \ssnorm{f}{\sigma}^{N, M}.
\]
The proof for $0<\Re(s)$ and $0<\sigma <\abs{s}$ proceeds almost identically, but we use as multiplier $\overline{s \p_x u^{(n)}(0)}$, take the real part and note that the term proportional to $\kappa$ has a good sign relative to the $\abs{s}^2 \abs{\p_x u^{(n)}(0)}^2$ term.
\end{proof}

These estimates are already enough to establish that for $\kappa >0$ the operator is Gevrey hypoelliptic, i.e.\ if $f$ belongs to a suitable Gevrey space, then so will $u$. 

\begin{Theorem}\label{MorThm}
Suppose $\kappa >0$, $s \in \Omega^1_\sigma$, and suppose that $u \in C^\infty(\overline{I})$ satisfies \eq{shiftedeq}.  Suppose further that:
\[
\snorm{f}{\sigma, 0, 0}^N+\snorm{f}{\sigma, 1, 0}^N+ \ssnorm{f}{\sigma}^N <\infty.
\]
Then
\[
 \snorm{\p_x u}{\sigma, 0, 0}^N+\snorm{\p_x u}{\sigma, 1, 0}^N+\snorm{\p_x u}{\sigma, 2, 0}^N + \ssnorm{\p_x u}{\sigma}^N <\infty,
\]
and we have the estimate:
\begin{align*}
&\kappa \left[ \left( \snorm{\p_x u}{\sigma, 0, 1}^{N}\right)^2 + \left( \snorm{\p_x u}{\sigma, 1, 1}^{N}\right)^2 \right] + \left(2 \sigma+\Re(s) \right) \left( \snorm{\p_x u}{\sigma, 1, 0}^{N}\right)^2 + 2\sigma \left( \snorm{\p_x u}{\sigma, 2, 0}^{N}\right)^2  \\
&\qquad \leq \left( \textstyle\frac{\sigma}{2(N+1)} -\Re(s)\right)\left(\snorm{\p_x u}{\sigma, 0, 0}^{N} \right)^2 +{\textstyle \frac{\sigma}{2 N (N+1)C_{s, \sigma}^2} }\left( \ssnorm{f}{\sigma}^{N} \right)^2 \\
&\quad \qquad + \left( \snorm{\p_x u}{\sigma, 0, 0}^{N} + \snorm{\p_x u}{\sigma, 1, 0}^{N}\right) \left( \snorm{f}{\sigma, 0, 0}^{N} + \snorm{f}{\sigma, 1, 0}^{N}\right) 
\end{align*}
where $C_{s, \sigma}$ is as in Lemma \ref{Best}.
\end{Theorem}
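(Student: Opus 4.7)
The strategy is to pass to the limit $M \to \infty$ in the estimates of Lemma \ref{GMest} (at $N' = N$) combined with the boundary control of Lemma \ref{Best}, with the main difficulty being to first establish the \emph{a priori} finiteness of the full Gevrey norms.

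Setting $N' = N$ in Lemma \ref{GMest} annihilates the two correction terms involving $[N'(N'+1) - N(N+1)]$, and substituting $\ssnorm{\p_x u}{\sigma}^{N, M} \leq C_{s, \sigma}^{-1} \ssnorm{f}{\sigma}^{N, M}$ from Lemma \ref{Best} converts the boundary piece on the right into $\frac{\sigma}{2N(N+1) C_{s, \sigma}^2} (\ssnorm{f}{\sigma}^{N, M})^2$. For every $M \geq N$ this is precisely the inequality of the theorem, but with partial norms $\snorm{\cdot}{\sigma, k, l}^{N, M}$ in place of $\snorm{\cdot}{\sigma, k, l}^{N}$. The final estimate would then follow by monotone convergence in $M$, provided the full norms on the left are finite.

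The main obstacle is precisely to justify this finiteness. Taking $N' = N$ does not close the estimate because $\snorm{\p_x u}{\sigma, 0, 0}^{N, M}$ and $\snorm{\p_x u}{\sigma, 1, 0}^{N, M}$ appear unabsorbed on the right. Here I would exploit $\kappa > 0$ by reapplying Lemma \ref{GMest} at an auxiliary index $N^{*} \geq N$ chosen so large that both $\frac{\sigma}{2(N^{*} + 1)} - \Re(s) < \kappa N^{*}$ and $2\sigma + \Re(s) + \kappa N^{*} > 0$; this is always possible for any $s$ once $\kappa > 0$. By the inequality $N^{1/2} \snorm{v}{\sigma, k, 0}^{N, M} \leq \snorm{v}{\sigma, k, 1}^{N, M}$ from the preliminaries, the $\kappa$-weighted terms on the left of Lemma \ref{GMest} dominate $\kappa N^{*} (\snorm{\p_x u}{\sigma, k, 0}^{N^{*}, M})^2$ for $k = 0, 1$, so the $\snorm{\p_x u}{\sigma, 0, 0}^{N^{*}, M}$ piece on the right can be absorbed, and Young's inequality applied to the cross term $\snorm{\p_x u}{\sigma, 1, 0}^{N^{*}, M} \snorm{f}{\sigma, 1, 0}^{N^{*}, M}$ lets us absorb the $\snorm{\p_x u}{\sigma, 1, 0}^{N^{*}, M}$ piece as well. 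The residual correction terms from Lemma \ref{GMest} are proportional to $\norm{u^{(N^{*})}}{L^2(I)}^2$ and $\abs{u^{(N^{*})}(0)}^2$, which are finite because $u \in C^\infty(\overline{I})$ is part of the hypothesis.

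The outcome is a uniform-in-$M$ bound on $(\snorm{\p_x u}{\sigma, 0, 0}^{N^{*}, M})^2 + (\snorm{\p_x u}{\sigma, 1, 0}^{N^{*}, M})^2 + (\snorm{\p_x u}{\sigma, 2, 0}^{N^{*}, M})^2$, and sending $M \to \infty$ monotonically establishes $\snorm{\p_x u}{\sigma, k, 0}^{N^{*}} < \infty$ for $k = 0, 1, 2$. Adjoining the finite initial segments $\snorm{\p_x u}{\sigma, k, 0}^{N, N^{*} - 1}$, which are trivially finite from the assumed smoothness of $u$, yields $\snorm{\p_x u}{\sigma, k, 0}^{N} < \infty$; the boundary norm bound $\ssnorm{\p_x u}{\sigma}^N < \infty$ is immediate from Lemma \ref{Best} and the hypothesis $\ssnorm{f}{\sigma}^N < \infty$. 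Returning to the $N' = N$ identity established in the first step, the full norms on the left are now known to be finite, so we may pass to the limit $M \to \infty$ by monotone convergence and obtain the claimed estimate.
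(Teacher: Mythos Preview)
Your proposal is correct and follows essentially the same approach as the paper: you use an auxiliary large index $N^{*}$ (the paper calls it $N'$) together with $\kappa>0$ and the inequality $N^{1/2}\snorm{v}{\sigma,k,0}^{N,M}\leq\snorm{v}{\sigma,k,1}^{N,M}$ to close the estimate and obtain finiteness uniformly in $M$, extend down to $N$ via smoothness of $u$, handle the boundary norm by Lemma~\ref{Best}, and then return to the $N'=N$ case of Lemma~\ref{GMest} to pass $M\to\infty$. The only cosmetic difference is that the paper applies Young's inequality to the full cross term $(\snorm{\p_x u}{\sigma,0,0}^{N',M}+\snorm{\p_x u}{\sigma,1,0}^{N',M})(\snorm{f}{\sigma,0,0}^{N',M}+\snorm{f}{\sigma,1,0}^{N',M})$ at once, producing an extra $-1$ in both absorption conditions, whereas you mention it only for the $\snorm{\p_x u}{\sigma,1,0}$ factor; but this is immaterial since $N^{*}$ may be taken as large as needed.
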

\begin{proof}
Combining the estimate of Lemma \ref{GMest} with that of Lemma \ref{Best} and recalling that:
\be
N'{}^{\frac{1}{2}}  \snorm{u}{ \sigma, k, 0}^{N',M} \leq \snorm{u}{ \sigma, k, 1}^{N',M}, 
\ee
we have that for any $N\leq N' \leq M$:
\begin{align*}
&\left( N'\kappa-\textstyle\frac{\sigma}{2(N'+1)} +\Re(s)-1 \right)\left( \snorm{\p_x u}{\sigma, 0, 0}^{N', M}\right)^2 + \left(N'\kappa +2 \sigma+\Re(s) -1\right) \left( \snorm{\p_x u}{\sigma, 1, 0}^{N', M}\right)^2 \\
&\qquad  + 2\sigma \left( \snorm{\p_x u}{\sigma, 2, 0}^{N', M}\right)^2  \\
&\qquad \leq {\textstyle \frac{\sigma}{2 N' (N'+1)} (\abs{\Im(s)}-\sigma)^2 }\left( \ssnorm{f}{\sigma}^{N', M} \right)^2+ \left( \snorm{f}{\sigma, 0, 0}^{N', M} \right)^2 + \left(\snorm{f}{\sigma, 1, 0}^{N', M}\right)^2 \\
&\qquad \qquad + {\textstyle\frac{\sigma^{2N'}}{N'!^2 (N'+1)!^2} \frac{N'}{2 \sigma} \left[  N'(N'+1)-N(N+1)  \right] }\int_0^1 \abs{u^{(N')}}^2 dx \\
&\qquad \qquad +  \frac{1}{2}  {\textstyle\frac{\sigma^{2N'}}{N'!^2 (N'+1)!^2}} \left[  N'(N'+1)-N(N+1)  \right]  \abs{u^{(N')}(0)}^2
\end{align*}
Clearly, fixing $N'$ sufficiently large we have:
\begin{align*}
 \snorm{\p_x u}{\sigma, 0, 0}^{N', M}+\snorm{\p_x u}{\sigma, 1, 0}^{N', M}+\snorm{\p_x u}{\sigma, 2, 0}^{N', M}  &\lesssim \snorm{f}{\sigma, 0, 0}^{N', M}+\snorm{f}{\sigma, 1, 0}^{N', M}+ \ssnorm{f}{\sigma}^{N, M} \\&\qquad + \norm{u^{(N')}}{L^2(I)}^2 + \abs{u^{(N')}(0)}^2
\end{align*}
where the implicit constant depends on $N, N', \sigma, s$, but crucially \emph{not} on $M$.  Sending $M$ to infinity, we conclude:
\[
 \snorm{\p_x u}{\sigma, 0, 0}^{N'}+\snorm{\p_x u}{\sigma, 1, 0}^{N'}+\snorm{\p_x u}{\sigma, 2, 0}^{N'} <\infty.
\]
Since we assume $u \in C^\infty(\overline{I})$, it is immediate that the statement holds with $N'$ replaced by $N$. The fact that $\ssnorm{\p_x u}{\sigma}^N$ is finite follows immediately from Lemma \ref{Best}. Returning to the estimate of Lemma \ref{GMest} and using Lemma \ref{Best} to control the boundary terms, we set $N'=N$ and send $M \to \infty$ to recover the estimate above.
\end{proof}

If we assume that $N$ is large and that $\Re(s)$ is small, our earlier estimates allow us to control $\snorm{\p_x u}{\sigma, 1, 0}^N$ and $\snorm{\p_x u}{\sigma, 1, 0}^N$ by a small multiple $\snorm{\p_x u}{\sigma, 0, 0}^N$, plus norms of the data. In order to close the estimate, we need to control $\snorm{\p_x u}{\sigma, 0, 0}^N$ by $\snorm{\p_x u}{\sigma, 1, 0}^N$ and $\snorm{\p_x u}{\sigma, 1, 0}^N$ with coefficients that are not too large. To do this we again use a multiplier estimate, but this time one which does not close at any finite order of derivatives.

\begin{Theorem} \label{SecEst}
Suppose $\kappa \geq 0$ and that $u \in C^\infty(\overline{I})$ satisfies \eq{shiftedeq}.  Suppose further that:
\[
\snorm{f}{\sigma, 0, 0}^N+\snorm{f}{\sigma, 1, 0}^N+ \ssnorm{f}{\sigma}^N <\infty.
\]
and
\[
 \snorm{\p_x u}{\sigma, 0, 0}^N+\snorm{\p_x u}{\sigma, 1, 0}^N+\snorm{\p_x u}{\sigma, 2, 0}^N + \ssnorm{\p_x u}{\sigma}^N <\infty.
\]
Then we have the estimate:
\begin{align*}
(\abs{s} - \sigma) \left(\snorm{\p_x u}{\sigma, 0, 0}^N \right)^2 &\leq \frac{-\Re(s)}{\abs{s}}\left(1+\frac{1}{N} \right)\left[\kappa \left(\snorm{\p_x u}{\sigma, 0, 1}^N \right)^2 + 2\sigma \left(\snorm{\p_x u}{\sigma, 1, 0}^N \right)^2  \right] \\
& \qquad + \left(1+\frac{2}{N} \right) \left[ \kappa\left(\snorm{\p_x u}{\sigma, 1, 1}^N \right)^2 +\sigma \left(\snorm{\p_x u}{\sigma, 2, 0}^N \right)^2 \right] \\
&\qquad + \snorm{\p_x u}{\sigma, 0, 0}^N \snorm{f}{\sigma, 0, 0}^N 
\end{align*}
\end{Theorem}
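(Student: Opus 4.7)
The strategy mirrors that of Lemma \ref{GMest} and Theorem \ref{MorThm}, but with a genuinely complex multiplier designed to extract the factor $|s|$ that appears on the left. Setting $\hat{s} := s/|s|$ and $v_n := u^{(n)}$, I would first differentiate \eq{shiftedeq} $n$ times: for $n \geq N$ the polynomial $\mathcal{K}$-correction vanishes and we recover \eq{shiftedd} with $c_n := n(n+1) - N(N+1)$. Multiplying this by $\overline{\hat{s}}\,\overline{v_n'}$, taking real parts, and integrating over $I$ yields an identity in which the key identity $\Re(\overline{\hat{s}}\,s) = |s|$ produces the desired $|s||v_n'|^2$ coefficient, while the subleading first-order piece contributes $(\Re(s)/|s|)(\kappa(n+1) + 2(n+1)x)|v_n'|^2$, with the favourable sign when $\Re(s) \leq 0$.

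Three terms on the right-hand side then need to be controlled. For the cross term $(\kappa x + x^2)\Re[\overline{\hat{s}}\,\overline{v_n'}\,v_n'']$ I would bound pointwise by $(\kappa x + x^2)|v_n'||v_n''|$ and apply Young's inequality with a parameter $a_n$ of order $\sigma/(n(n+1))$, tuned so that the resulting $|v_n'|^2$-piece --- summed using $n(n+1) \leq (1+1/N)n^2$ for $n \geq N$ --- lands in the seminorms $\snorm{\p_x u}{\sigma,1,1}^N$ and $\snorm{\p_x u}{\sigma,2,0}^N$, while the $|v_n''|^2$-piece, after the index shift $m = n+1$ and the identity $\sigma^{2n}/(n!(n+1)!)^2 = \sigma^{-2}m^2(m+1)^2 \sigma^{2m}/(m!(m+1)!)^2$, lands in the same pair of seminorms. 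The elementary inequality $(m+1)^2/m^2 \leq 1 + 2/N$ for $m \geq N+1$ (equivalent to $N(2m+1) \leq 2m^2$, which is valid already at $m = N+1$ and is preserved for larger $m$) supplies the factor $(1+2/N)$ in the target. The $(1+1/N)$ factor attached to $(\Re(s)/|s|)$ comes directly from $(n+1)/n \leq 1+1/N$ applied to the coefficients $\kappa(n+1)$ and $2(n+1)x$.

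For the $c_n$ term $c_n\Re[\overline{\hat{s}}\,\overline{v_n'}\,v_n]$ I would apply Young's inequality with parameter $c_n/\sigma$: the $|v_n'|^2$-piece contributes $\tfrac{\sigma}{2}(\snorm{\p_x u}{\sigma,0,0}^N)^2$ directly, while the $|v_n|^2$-piece uses the algebraic identity $n(n+1)/(n!(n+1)!) = 1/((n-1)!n!)$ to obtain $c_n^2/(n!(n+1)!)^2 \leq 1/((n-1)!n!)^2$, together with the crucial vanishing $c_N = 0$ (so the effective sum starts at $n = N+1$), followed by the index shift $k = n-1$, yielding another $\tfrac{\sigma}{2}(\snorm{\p_x u}{\sigma,0,0}^N)^2$. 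The total $\sigma(\snorm{\p_x u}{\sigma,0,0}^N)^2$ is then moved to the left-hand side to produce the coefficient $(|s|-\sigma)$. The source term contributes $\snorm{\p_x u}{\sigma,0,0}^N\snorm{f}{\sigma,0,0}^N$ by Cauchy--Schwarz in the summation.

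The principal obstacle will be calibrating the Young's parameter $a_n$ in the cross term so that, after the index shift, both the $|v_n'|^2$- and $|v_n''|^2$-pieces land inside the same pair of seminorms with constants that tend to unity as $N\to\infty$; some trial is required to identify the correct power of $n$. A secondary subtlety, alluded to in the discussion preceding this theorem, is that the cross term is \emph{not} an exact derivative, since the imaginary part $\Im[\overline{v_n'}\,v_n'']$ is a Wronskian-like quantity with no simple primitive --- which is exactly why we must bound it pointwise rather than integrating by parts, and why the resulting estimate ``does not close at any finite order of derivatives'' in the sense foreshadowed in the text.
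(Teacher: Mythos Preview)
Your strategy is correct and matches the paper's own proof closely. The multiplier $\overline{\hat s}\,\overline{v_n'}$ is exactly the paper's choice: they isolate $s\,\partial_x u^{(n)}$ on the left of \eq{shiftedd}, multiply by $\overline{s}\,\overline{\partial_x u^{(n)}}$, take real parts and integrate, then divide through by $|s|$ before weighting by $\sigma^{2n}/n!^2(n+1)!^2$ and summing. The $(-\Re(s)/|s|)(1+1/N)$ factor, the $\sigma(\snorm{\partial_x u}{\sigma,0,0}^N)^2$ contribution from the $c_n$ term (using $c_N=0$ and the index shift $k=n-1$), and the $f$ term via Cauchy--Schwarz all appear exactly as you describe.

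The one place where your plan diverges from the paper is the treatment of the cross term $(\kappa x + x^2)\Re[\overline{\hat s}\,\overline{v_n'}v_n'']$. You propose a parametrised Young's inequality with $a_n$ tuned so that both pieces land in the target seminorms after the index shift; the paper instead applies Cauchy--Schwarz first in the $x$-integral (splitting the $x$- and $x^2$-weights symmetrically between $v_n'$ and $v_n''$) and then again in the $n$-sum, which automatically supplies the optimal splitting without any tuning. Concretely, they rewrite the weight as a product involving the $n$th and $(n{+}1)$th summands of $\snorm{\partial_x u}{\sigma,1,1}^N$ (resp.\ $\snorm{\partial_x u}{\sigma,2,0}^N$), and the residual factor is $(n+2)/n \leq 1+2/N$, giving the claimed constant directly. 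Your Young's approach would work as well, but the calibration you flag as ``the principal obstacle'' is sidestepped entirely by the two-step Cauchy--Schwarz, which is the cleaner route here.
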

\begin{proof}
We return to the differentiated equation in the form \eq{shiftedd} and rearrange in the form:
\[
s \p_x u^{(n)} = - \left( \kappa(n+1) + 2 (n+1) x\right) \p_x u^{(n)} + (N(N+1) - n(n+1)) u^{(n)} - (\kappa x + x^2) \p^2_{x} u^{(n)} + f^{(n)}
\]
Multiplying by $\overline{s} \overline{\p_xu^{(n)}}$, taking the real part and integrating over $I$, we find:
\begin{align*}
\abs{s}^2 \int_0^1 \abs{\p_x u^{(n)}}^2 dx &\leq -\Re(s) \int_0^1 \left( \kappa(n+1) + 2 (n+1) x\right) \abs{\p_x u^{(n)}}^2 dx \\
&\qquad + \abs{s} (n(n+1) - N(N+1)) \left( \int_0^1 \abs{u^{(n)}}^2 dx\right)^\frac{1}{2}\left( \int_0^1 \abs{\p_x u^{(n)}}^2dx \right)^\frac{1}{2} \\
&\qquad + \kappa \abs{s} \left( \int_0^1 x \abs{\p_x^2 u^{(n)}}^2 \right)^\frac{1}{2} \left( \int_0^1 x \abs{\p_xu^{(n)}}^2 \right)^\frac{1}{2} \\
&\qquad + \abs{s}  \left( \int_0^1 x^2 \abs{\p_x^2 u^{(n)}}^2 \right)^\frac{1}{2} \left( \int_0^1 x^2 \abs{\p_xu^{(n)}}^2 \right)^\frac{1}{2} \\
&\qquad + \abs{s}  \left( \int_0^1 \abs{\p_x u^{(n)}}^2 \right)^\frac{1}{2} \left( \int_0^1 \abs{f^{(n)}}^2 \right)^\frac{1}{2}
\end{align*} 
We multiply by $\frac{1}{\abs{s}}\frac{\sigma^{2n}}{n!^2 (n+1)!^2}$, sum over $n\geq N$ and then bound the terms on the right hand side. Again, working one at a time we find:
\begin{align*}
&-\Re(s)\sum_{n=N}^\infty {\textstyle{\frac{1}{\abs{s}}\frac{\sigma^{2n}}{n!^2 (n+1)!^2}} }\int_0^1 \left( \kappa(n+1) + 2 (n+1) x\right) \abs{\p_x u^{(n)}}^2 dx  \\ & \qquad = -\Re(s)\sum_{n=N}^\infty \left( 1+\frac{1}{n}\right) {\textstyle{\frac{1}{\abs{s}}\frac{\sigma^{2n}}{n!^2 (n+1)!^2}} }\int_0^1 \left( \kappa n + 2 n x\right) \abs{\p_x u^{(n)}}^2 dx \\
&\qquad \leq  \frac{-\Re(s)}{\abs{s}}\left(1+\frac{1}{N} \right)\left[\kappa \left(\snorm{\p_x u}{\sigma, 0, 1}^N \right)^2 + 2\sigma \left(\snorm{\p_x u}{\sigma, 1, 0}^N \right)^2  \right] 
\end{align*}
Next,
\begin{align*}
& \abs{s} \sum_{n=N}^\infty  {\textstyle{\frac{1}{\abs{s}}\frac{\sigma^{2n}}{n!^2 (n+1)!^2}} } (n(n+1) - N(N+1)) \left( \int_0^1 \abs{u^{(n)}}^2 dx\right)^\frac{1}{2}\left( \int_0^1 \abs{\p_x u^{(n)}}^2dx \right)^\frac{1}{2}  \\
&\qquad = \sigma \sum_{n=N}^\infty \left(1-{\textstyle \frac{N(N+1)}{n(n+1)} }\right) \left(  {\textstyle{\frac{\sigma^{2(n-1)}}{(n-1)!^2 n!^2}} } \int_0^1 \abs{u^{(n)}}^2 dx \right)^\frac{1}{2} \left(  {\textstyle{\frac{\sigma^{2n}}{n!^2 (n+1)!^2}} } \int_0^1 \abs{\p_x u^{(n)}}^2 dx \right)^\frac{1}{2} \\
&\qquad \leq \sigma  \left(\snorm{\p_x u}{\sigma, 0, 0}^N \right)^2.
\end{align*}
Continuing,
\begin{align*}
&\kappa \abs{s} \sum_{n=N}^\infty  {\textstyle{\frac{1}{\abs{s}}\frac{\sigma^{2n}}{n!^2 (n+1)!^2}} }  \left( \int_0^1 x \abs{\p_x^2 u^{(n)}}^2 \right)^\frac{1}{2} \left( \int_0^1 x \abs{\p_xu^{(n)}}^2 \right)^\frac{1}{2} \\
&\qquad = \kappa \sum_{n=N}^\infty \left( \frac{n+2}{n}\right) \left({\textstyle \frac{\sigma^{2(n+1)}(n+1)^2}{(n+1)!^2 (n+2)!^2}}\int_0^1 \frac{x}{\sigma} \abs{\p_x^2 u^{(n)}}^2 \right)^\frac{1}{2} \left( {\textstyle \frac{\sigma^{2n}n^2}{n!^2 (n+1)!^2}} \int_0^1 \frac{x}{\sigma} \abs{\p_xu^{(n)}}^2 \right)^\frac{1}{2} \\
&\qquad \leq \kappa \left(1+\frac{2}{N} \right) \left(\snorm{\p_x u}{\sigma, 1, 1}^N \right)^2
\end{align*}
Similarly,
\begin{align*}
&\abs{s} \sum_{n=N}^\infty  {\textstyle{\frac{1}{\abs{s}}\frac{\sigma^{2n}}{n!^2 (n+1)!^2}} }  \left( \int_0^1 x^2 \abs{\p_x^2 u^{(n)}}^2 \right)^\frac{1}{2} \left( \int_0^1 x^2 \abs{\p_xu^{(n)}}^2 \right)^\frac{1}{2} \\
&\qquad = \sigma\sum_{n=N}^\infty  \left( \frac{n+2}{n}\right) \left({\textstyle \frac{\sigma^{2(n+1)}(n+1)^2}{(n+1)!^2 (n+2)!^2}}\int_0^1 \left(\frac{x}{\sigma}\right)^2 \abs{\p_x^2 u^{(n)}}^2 \right)^\frac{1}{2} \left( {\textstyle \frac{\sigma^{2n}n^2}{n!^2 (n+1)!^2}} \int_0^1 \left(\frac{x}{\sigma}\right)^2 \abs{\p_xu^{(n)}}^2 \right)^\frac{1}{2} \\
&\qquad \leq \sigma \left(1+\frac{2}{N} \right) \left(\snorm{\p_x u}{\sigma, 2, 0}^N \right)^2
\end{align*}
Finally,
\begin{align*}
&\abs{s}\sum_{n=N}^\infty  {\textstyle{\frac{1}{\abs{s}}\frac{\sigma^{2n}}{n!^2 (n+1)!^2}} }  \left( \int_0^1 \abs{\p_x u^{(n)}}^2 \right)^\frac{1}{2} \left( \int_0^1 \abs{f^{(n)}}^2 \right)^\frac{1}{2} \\
&\qquad =\sum_{n=N}^\infty  \left({\textstyle{\frac{1}{\abs{s}}\frac{\sigma^{2n}}{n!^2 (n+1)!^2}} }  \int_0^1 \abs{\p_x u^{(n)}}^2 \right)^\frac{1}{2} \left( {\textstyle{\frac{1}{\abs{s}}\frac{\sigma^{2n}}{n!^2 (n+1)!^2}} } \int_0^1 \abs{f^{(n)}}^2 \right)^\frac{1}{2} \\
&\leq \snorm{\p_x u}{\sigma, 0, 0}^N \snorm{f}{\sigma, 0, 0}^N.
\end{align*}
Combining all of these estimates yields the theorem.
\end{proof}
We are now, finally, in a position to close the estimates for $\kappa >0$, whence we will construct (unique!) solutions when $\kappa = 0$. The key point is that the estimate of Theorem \ref{MorThm} controls the error terms in Theorem \ref{SecEst} in terms of $\snorm{\p_x u}{\sigma, 0, 0}^N $, but with a coefficient that is small enough to absorb, at least in a certain region of the complex plane. We introduce:
\[
\Omega_{\sigma}^2 := \left\{ s \in \C  \left| 0<\sigma< \abs{s} +\Re(s), \,\, \sigma > \frac{- \Re(s) \abs{s}}{2(\abs{s} + \Re(s))} \right.\right\},
\]
and
\[
\Omega_\sigma^3 := \left\{ s \in \C\left| \sigma(\abs{s} - \sigma +\Re(s)) - \left(\sigma \left( 1+ \frac{\Re(s)}{\abs{s}}  \right)+ \frac{1}{2}\Re(s) \right)^2>0\right.\right\},
\]
and finally:
\[
\Omega_\sigma := \Omega^1_{\sigma} \cap \left( \Omega^2_\sigma \cup \Omega^3_\sigma \right)
\]
We note the following:
\begin{Lemma}
There exists $\varphi_0 > \frac{2 \pi}{3}$ such that for all $s \in \C$ with $\abs{\arg{s}}<\varphi_0$ we can find $\sigma>0$ with $s \in \Omega_{\sigma}$.
\end{Lemma}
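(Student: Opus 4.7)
The plan is to exploit the manifest homogeneity of the sets $\Omega^1_\sigma, \Omega^2_\sigma, \Omega^3_\sigma$: each defining inequality is invariant under the joint rescaling $(s,\sigma) \mapsto (ts, t\sigma)$ for $t>0$, so membership in $\Omega_\sigma$ depends only on $\theta = \arg s$ and on the ratio $\alpha = \sigma/|s|$. Accordingly I would normalise to $|s|=1$, set $c = \cos\theta$, and for each $|\theta|$ below a threshold $\varphi_0$ to be determined exhibit a positive $\alpha$ satisfying $\Omega^1$ together with one of $\Omega^2$ or $\Omega^3$. By the symmetry $\theta \leftrightarrow -\theta$ it suffices to treat $\theta \in [0, \pi)$.

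In the regime $\theta \in [0, 2\pi/3)$ I would rely on $\Omega^2$. On the unit circle this condition reads $\max\{0, -c/(2(1+c))\} < \alpha < 1+c$, which is a non-empty open interval precisely when $(2c+1)(c+2) > 0$, i.e.\ $c > -1/2$. Compatibility with $\Omega^1$ is then immediate: for $\Re s > 0$ the $\Omega^1$ constraint $\alpha < 1$ is more restrictive than $\alpha < 1+c$ and leaves a non-empty intersection, while for $\Re s \leq 0$ the elementary inequality $(1+c)^2 \leq 1-c^2$ (valid for $c \in [-1, 0]$) yields $1+c \leq \sqrt{1-c^2} = |\sin\theta|$, so the $\Omega^2$ upper bound already forces the $\Omega^1$ bound $\alpha < |\sin\theta|$.

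For $\theta \in [2\pi/3, \pi)$ the set $\Omega^2$ is empty, so I would turn to $\Omega^3$. On $|s|=1$ the $\Omega^3$ inequality becomes the downward-opening quadratic $p_c(\alpha) := -\bigl(1+(1+c)^2\bigr)\alpha^2 + (1-c^2)\alpha - c^2/4 > 0$, whose positivity set is non-empty precisely when the discriminant $D(c) = (1-c^2)^2 - c^2\bigl(1+(1+c)^2\bigr) = -\bigl(2c^3+4c^2-1\bigr)$ is positive. Studying the cubic $q(c) = 2c^3+4c^2-1$, I note $q(-1) = 1$, $q(-1/2) = -1/4$, and $q'(c) = 2c(3c+4) < 0$ on $(-1, 0)$, so $q$ has a unique root $c^* \in (-1, -1/2)$. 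I would then define $\varphi_0 := \arccos c^*$, which exceeds $2\pi/3$ as required, and the candidate $\alpha$ would be any point in the positivity interval $(\alpha_-(c), \alpha_+(c))$ of $p_c$.

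The final step, which I expect to be the most delicate, is to check that this positivity interval sits inside the $\Omega^1$-interval $(0, |\sin\theta|)$ for every $c \in (c^*, -1/2]$. My plan is a Vieta estimate: since $\alpha_-(c)\alpha_+(c) = c^2/[4(1+(1+c)^2)]>0$ and $\alpha_-(c)+\alpha_+(c) = (1-c^2)/(1+(1+c)^2)>0$ both roots are positive, and $\alpha_+(c) \leq \alpha_-(c)+\alpha_+(c) = (1-c^2)/(1+(1+c)^2) \leq 1 - c^2 \leq \sqrt{1-c^2} = |\sin\theta|$. Hence $(\alpha_-(c), \alpha_+(c)) \subset (0, |\sin\theta|)$, and any $\alpha$ in this interval furnishes the required witness for $e^{i\theta} \in \Omega_\alpha$, completing the proof.
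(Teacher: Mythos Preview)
Your argument is correct and follows the same strategy as the paper---exploit the scaling $\Omega_{\lambda\sigma}=\lambda\Omega_\sigma$ to reduce to a purely angular analysis, and locate the critical angle via the discriminant of the $\Omega^3_\sigma$-condition regarded as a quadratic in $\sigma/|s|$. Your treatment is in fact more detailed than the paper's sketch (which appeals to the figure in lieu of your explicit $\Omega^1$ and $\Omega^2$ compatibility checks); your cubic $2c^3+4c^2-1=0$ with $c=\cos\varphi_0$ reproduces the paper's numerical value $\varphi_0\simeq 0.704\pi$, although it differs superficially from the quartic printed there, which appears to contain a typographical slip.
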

\begin{proof}
The boundary of $\Omega_{\sigma}$ is a piecewise smooth curve (see Figure \ref{OmFig}). From the sketch, and noting that $\Omega_{\lambda \sigma} = \lambda \Omega_{\sigma}$ for $\lambda>0$, we see that if we take $\varphi_0\in [0, \pi)$ to be the largest angle such that the line $\arg{s} = \varpi_0$ is tangent to the boundary the result will follow. At this point, the relevant condition is that determined by $\Omega^3_\sigma$ and we may verify that $\varphi_0$ is a root of:
\[
\sin^4 \varphi - \cos^2\varphi(2 + 2\cos \varphi(2 + \cos\varphi)) = 0.
\]
Numerically, we find $\varphi_0 \simeq 0.704 \pi$. 
\end{proof}

\begin{Theorem}\label{ClosedEst}
Suppose $\kappa \geq 0$, $\Upsilon \cc \Omega_\sigma$ and that $u \in C^\infty(\overline{I})$ satisfies \eq{shiftedeq}.  Suppose further that:
\[
\snorm{f}{\sigma, 0, 0}^N+\snorm{f}{\sigma, 1, 0}^N+ \ssnorm{f}{\sigma}^N <\infty,
\]
and
\[
 \snorm{\p_x u}{\sigma, 0, 0}^N+\snorm{\p_x u}{\sigma, 1, 0}^N+\snorm{\p_x u}{\sigma, 2, 0}^N + \ssnorm{\p_x u}{\sigma}^N <\infty.
\]
Then there exist constants $N_0$,  $C$, depending on $\sigma, \Upsilon$ but not on $\kappa$, such that for $N\geq N_0$:
\begin{align*}
 \snorm{\p_x u}{\sigma, 0, 0}^N+\snorm{\p_x u}{\sigma, 1, 0}^N+\snorm{\p_x u}{\sigma, 2, 0}^N + \ssnorm{\p_x u}{\sigma}^N \leq C\left( \snorm{f}{\sigma, 0, 0}^N+\snorm{f}{\sigma, 1, 0}^N+ \ssnorm{f}{\sigma}^N \right).
\end{align*}
\end{Theorem}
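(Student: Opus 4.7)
The plan is to treat Theorems \ref{MorThm} and \ref{SecEst} as a pair of complementary a priori estimates and close them by absorption: Theorem \ref{MorThm} controls the four non-negative higher-order quantities $\kappa(\snorm{\p_x u}{\sigma,0,1}^N)^2$, $\kappa(\snorm{\p_x u}{\sigma,1,1}^N)^2$, $(\snorm{\p_x u}{\sigma,1,0}^N)^2$ and $(\snorm{\p_x u}{\sigma,2,0}^N)^2$ in terms of $(\snorm{\p_x u}{\sigma,0,0}^N)^2$ and data, while Theorem \ref{SecEst} does the reverse, controlling $(\snorm{\p_x u}{\sigma,0,0}^N)^2$ in terms of exactly those higher-order quantities. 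Substituting the former into the latter should yield a scalar inequality of the shape
\begin{equation*}
(\abs{s}-\sigma)(\snorm{\p_x u}{\sigma,0,0}^N)^2 \le c(s,\sigma,N)(\snorm{\p_x u}{\sigma,0,0}^N)^2 + C\bigl(\snorm{f}{\sigma,0,0}^N + \snorm{f}{\sigma,1,0}^N + \ssnorm{f}{\sigma}^N\bigr)^2,
\end{equation*}
which closes whenever $\abs{s}-\sigma > c(s,\sigma,N)$.

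Concretely, I would first absorb the cross terms $(\snorm{\p_x u}{\sigma,0,0}^N+\snorm{\p_x u}{\sigma,1,0}^N)(\snorm{f}{\sigma,0,0}^N+\snorm{f}{\sigma,1,0}^N)$ on the right of Theorem \ref{MorThm} via a Young inequality with small parameter, and then split MorThm into four individual bounds for each positive term on its LHS by dropping the others and dividing by the corresponding coefficient, namely $1$, $1$, $2\sigma+\Re(s)$ and $2\sigma$. Inserting these bounds into the right of Theorem \ref{SecEst} (whose RHS is precisely linear in those four quantities, with coefficients $-\Re(s)/\abs{s}\cdot(1+1/N)$ and $1+2/N$) produces the inequality above with an explicit rational $c(s,\sigma,N)$ whose leading behaviour as $N\to\infty$ is controlled by $-\Re(s)$ and $\sigma/(2\sigma+\Re(s))$.

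The main technical obstacle is then verifying $\abs{s}-\sigma > c(s,\sigma,N)$ uniformly on a given $\Upsilon\cc\Omega_\sigma$, uniformly in $\kappa\ge 0$ and for all sufficiently large $N$. This is exactly why $\Omega_\sigma$ is defined as it is: the constraint $\Omega^1_\sigma$ ensures $\abs{s}-\sigma>0$ (so the LHS is usefully signed) and is the hypothesis required by Lemma \ref{Best}; on $\Omega^2_\sigma$ one has $2\sigma+\Re(s)>0$, which legitimises the division by this coefficient when extracting the $\snorm{\p_x u}{\sigma,1,0}^N$ bound from MorThm, and the arithmetic inequality $\sigma > -\Re(s)\abs{s}/(2(\abs{s}+\Re(s)))$ delivers a positive absorption margin in the $N\to\infty$ limit; on $\Omega^3_\sigma$, which allows $2\sigma+\Re(s)\le 0$, one must instead avoid dividing by $2\sigma+\Re(s)$, handling the $\snorm{\p_x u}{\sigma,1,0}^N$ term by a direct Cauchy--Schwarz/quadratic argument, and the explicit polynomial inequality defining $\Omega^3_\sigma$ is precisely what is needed to keep the resulting quadratic form positive. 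Since $\Upsilon\cc\Omega_\sigma$, these margins are uniformly positive on $\Upsilon$, and choosing $N_0$ large enough absorbs all $O(1/N)$ corrections from the two theorems uniformly.

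Once $\snorm{\p_x u}{\sigma,0,0}^N \le C\,\mathcal{F}$ is established with $\mathcal{F}:=\snorm{f}{\sigma,0,0}^N+\snorm{f}{\sigma,1,0}^N+\ssnorm{f}{\sigma}^N$, substituting back into Theorem \ref{MorThm} immediately yields the bounds for $\snorm{\p_x u}{\sigma,1,0}^N$ and $\snorm{\p_x u}{\sigma,2,0}^N$, while Lemma \ref{Best} (applicable since $\Upsilon\subset\Omega^1_\sigma$) produces $\ssnorm{\p_x u}{\sigma}^N\lesssim \ssnorm{f}{\sigma}^N$. As all constants extracted in this process depend on $\sigma,\Upsilon,N_0$ but never on $\kappa$, the theorem follows.
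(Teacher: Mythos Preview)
Your overall plan—combine Theorems \ref{MorThm} and \ref{SecEst}, then absorb, and use Lemma \ref{Best} for the boundary seminorm—is exactly right, and matches the paper in spirit. However, the specific way you propose to combine the two estimates is too lossy to recover the full region $\Omega_\sigma$.

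The problem is your step ``split MorThm into four individual bounds for each positive term on its LHS by dropping the others and dividing by the corresponding coefficient.'' Doing this and then substituting the four separate bounds into the right-hand side of Theorem \ref{SecEst} means you pay the full right-hand side $(-\Re(s)+o_N(1))(\snorm{\p_x u}{\sigma,0,0}^N)^2$ of Theorem \ref{MorThm} once for each of the four terms. The resulting $c(s,\sigma,\infty)$ is strictly larger than what is needed, and the inequality $\abs{s}-\sigma>c$ fails at interior points of $\Omega_\sigma$. Concretely, take $\Re(s)=-1$, $\abs{s}=3$, $\sigma=1$: this lies in $\Omega^1_\sigma\cap\Omega^2_\sigma$, but your scheme gives $c=\tfrac{1}{3}\cdot 3+\tfrac{3}{2}=\tfrac{5}{2}$ while $\abs{s}-\sigma=2$. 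So your claim that the defining inequality of $\Omega^2_\sigma$ ``delivers a positive absorption margin'' is not correct for the splitting you describe. (A related imprecision: the middle coefficient that may go negative is $2\sigma(1+\Re(s)/\abs{s})+\Re(s)$, not $2\sigma+\Re(s)$; the latter is in fact always positive on $\Omega^2_\sigma$.)

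The fix is to avoid splitting entirely: add $(1+\tfrac{2}{N})$ times the estimate of Theorem \ref{MorThm} directly to that of Theorem \ref{SecEst}. The multiplier $(1+\tfrac{2}{N})$ makes the $\kappa(\snorm{\p_x u}{\sigma,1,1}^N)^2$ term cancel exactly and leaves the $\kappa(\snorm{\p_x u}{\sigma,0,1}^N)^2$ term with a non-negative net coefficient (since $-\Re(s)/\abs{s}\le 1$), so both $\kappa$-terms drop out uniformly in $\kappa\ge 0$. What remains on the left, to leading order in $N$, is
\[
\bigl(\abs{s}-\sigma+\Re(s)\bigr)(\snorm{\p_x u}{\sigma,0,0}^N)^2+\Bigl(2\sigma\bigl(1+\tfrac{\Re(s)}{\abs{s}}\bigr)+\Re(s)\Bigr)(\snorm{\p_x u}{\sigma,1,0}^N)^2+\sigma(\snorm{\p_x u}{\sigma,2,0}^N)^2.
\]
On $\Omega^2_\sigma$ all three coefficients are positive—this is exactly the definition of $\Omega^2_\sigma$. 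On $\Omega^3_\sigma$ the middle coefficient may be negative, and here your proposed Cauchy--Schwarz substitution $(\snorm{\cdot}{\sigma,1,0}^N)^2\le\snorm{\cdot}{\sigma,0,0}^N\snorm{\cdot}{\sigma,2,0}^N$ reduces the question to positive-definiteness of a $2\times 2$ quadratic form, whose discriminant condition is precisely the defining inequality of $\Omega^3_\sigma$. With this correction, the rest of your outline (back-substitution for the $(1,0)$ and $(2,0)$ seminorms, and Lemma \ref{Best} for the boundary seminorm) goes through as you describe.
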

\begin{proof}
We add $\left( 1+ \frac{2}{N}\right)$ times the estimate of Theorem \ref{MorThm}  to the estimate of Theorem  \ref{SecEst} to obtain:
\begin{align*}
&\left[ \abs{s} - \sigma + \left( 1+\frac{2}{N} \right)\left(\Re(s) - {\textstyle \frac{\sigma}{2(N+1)}} \right) \right] \left(\snorm{\p_x u}{\sigma, 0, 0}^N \right)^2 \\
&\qquad +  \left[ 2\sigma \left( 1+ \frac{\Re(s)}{\abs{s}}  +\frac{2}{N} +  \frac{\Re(s)}{\abs{s}N }  \right) + \Re(s) \left(1+\frac{1}{N} \right)  \right] \left(\snorm{\p_x u}{\sigma, 1, 0}^N \right)^2 \\
&\qquad + \sigma \left( 1 +\frac{2}{N}  \right)\left(\snorm{\p_x u}{\sigma, 2, 0}^N\right)^2 \\&\qquad \leq C_{N} \left[ \snorm{\p_x u}{\sigma, 0, 0}^N \snorm{f}{\sigma, 0, 0}^N+  \left( \snorm{\p_x u}{\sigma, 0, 0}^{N} + \snorm{\p_x u}{\sigma, 1, 0}^{N}\right) \left( \snorm{f}{\sigma, 0, 0}^{N} + \snorm{f}{\sigma, 1, 0}^{N}\right) +\left( \ssnorm{f}{\sigma}^{N} \right)^2\right] 
\end{align*}
Let us consider the left hand side, dropping terms which are $O(N^{-1})$:
\begin{align*}
A:=\left[ \abs{s} - \sigma +\Re(s) \right] \left(\snorm{\p_x u}{\sigma, 0, 0}^N \right)^2 +  \left[ 2\sigma \left( 1+ \frac{\Re(s)}{\abs{s}}  \right)+ \Re(s) \right] \left(\snorm{\p_x u}{\sigma, 1, 0}^N \right)^2 + \sigma  \left(\snorm{\p_x u}{\sigma, 2, 0}^N\right)^2
\end{align*}
Provided that $A$ bounds $\scriptstyle \left(\snorm{\p_x u}{\sigma, 0, 0}^N \right)^2+\left(\snorm{\p_x u}{\sigma, 1, 0}^N \right)^2+\left(\snorm{\p_x u}{\sigma, 2, 0}^N \right)^2$ from above, we can close our estimate for $N$ sufficiently large. The obvious way to arrange this is to require all coefficients to be positive, so that:
\[
0<\sigma <\abs{s} + \Re(s), \quad and \quad \sigma >\frac{ - \Re(s) \abs{s}}{2(\abs{s} + \Re(s))}.
\]
Thus if $s\in \Omega^2_\sigma \cap \Omega^1_{\sigma}$, by taking $N$ large enough, we can close the estimate.  In fact, since we know that $\scriptstyle \left(\snorm{\p_x u}{\sigma, 1, 0}^N \right)^2\leq \snorm{\p_x u}{\sigma, 0, 0}^N\snorm{\p_x u}{\sigma, 2, 0}^N$, we can also permit the coefficient of $\scriptstyle \left(\snorm{\p_x u}{\sigma, 1, 0}^N \right)^2$ to be slightly negative, provided the other coefficients are sufficiently positive. We have:
\[
A \geq a \left(\snorm{\p_x u}{\sigma, 0, 0}^N \right)^2 - 2 b \left(\snorm{\p_x u}{\sigma, 0, 0}^N \right)\left(\snorm{\p_x u}{\sigma, 2, 0}^N \right)+ c \left(\snorm{\p_x u}{\sigma, 2, 0}^N \right)^2
\]
with:
\[
a=  \abs{s} - \sigma +\Re(s), \qquad b = \abs{\sigma \left( 1+ \frac{\Re(s)}{\abs{s}}  \right)+ \frac{1}{2}\Re(s)},\quad  c = \sigma.
\]
The quadratic form $a x^2 - 2b xy + c y^2$ is positive definite provided $a+c>0$ and $ac-b^2>0$. The first condition is trivially satisfied and the second gives:
\[
\sigma(\abs{s} - \sigma +\Re(s)) - \left(\sigma \left( 1+ \frac{\Re(s)}{\abs{s}}  \right)+ \frac{1}{2}\Re(s) \right)^2>0.
\]
Thus if $s\in \Omega^3_\sigma \cap \Omega^1_{\sigma}$, we can also close the estimate. Provided we restrict $s$ to a compact set  $\Upsilon \subset \mathcal{D}^\sigma$, we can close the estimate with a uniform constant and with $N$ chosen sufficiently large uniformly on $\Upsilon$.
\end{proof}
Note that if $\kappa >0$, by Theorem \ref{MorThm}, we know that 
\[
 \snorm{\p_x u}{\sigma, 0, 0}^N+\snorm{\p_x u}{\sigma, 1, 0}^N+\snorm{\p_x u}{\sigma, 2, 0}^N + \ssnorm{\p_x u}{\sigma}^N <\infty,
\]
so this condition can be dropped unless $\kappa =0$.
\begin{Corollary}
Under the same hypotheses as the previous theorem, solutions of \eq{shiftedeq} satisfying:
\[
 \snorm{\p_x u}{\sigma, 0, 0}^N+\snorm{\p_x u}{\sigma, 1, 0}^N+\snorm{\p_x u}{\sigma, 2, 0}^N + \ssnorm{\p_x u}{\sigma}^N <\infty
\]
are unique up to a polynomial of degree $N$. That is, if $u_1, u_2$ are two solutions, then $u_1-u_2$ is a polynomial of degree $N$.
\end{Corollary}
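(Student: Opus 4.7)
The plan is to set $v := u_1 - u_2$ and to apply Theorem \ref{ClosedEst} to $v$ with vanishing right-hand side. Since \eq{shiftedeq} is linear in $u$ (the boundary-value term $\lambda \sum_{i=0}^{N-1} \frac{(x-1)^i}{i!} u^{(i+1)}(1)$ is linear in the traces of the derivatives of $u$ at $x=1$), the difference $v$ solves the same equation with $f$ replaced by $0$ and satisfies $v(1)=0$. Moreover, by the triangle inequality applied termwise to each of the semi-norms appearing in the finiteness hypothesis, all of $\snorm{\p_x v}{\sigma, k, 0}^N$ for $k=0,1,2$ and $\ssnorm{\p_x v}{\sigma}^N$ inherit their finiteness from $u_1$ and $u_2$.

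Next I would invoke Theorem \ref{ClosedEst} with $v$ in place of $u$ and data $f \equiv 0$. Assuming $N \geq N_0$, which is part of the standing hypothesis, this produces
\[
\snorm{\p_x v}{\sigma, 0, 0}^N + \snorm{\p_x v}{\sigma, 1, 0}^N + \snorm{\p_x v}{\sigma, 2, 0}^N + \ssnorm{\p_x v}{\sigma}^N \leq C \cdot 0 = 0.
\]
In particular $\snorm{\p_x v}{\sigma, 0, 0}^N = 0$, which unpacking the definition reads
\[
\sum_{n=N}^{\infty} \frac{\sigma^{2n}}{n!^2(n+1)!^2} \int_0^1 \abs{\p_x^{n+1} v}^2 \, dx = 0.
\]
Consequently $\p_x^{m} v \equiv 0$ on $I$ for every integer $m \geq N+1$, and so $v$ must coincide with a polynomial of degree at most $N$, which is the stated conclusion.

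I do not foresee any genuine obstacle here: the corollary is essentially a rewriting of the $f=0$ instance of the closed estimate already furnished by Theorem \ref{ClosedEst}, combined with the linearity of \eq{shiftedeq}. The only mildly delicate point is to observe that the threshold $N_0$ and the constant $C$ produced by Theorem \ref{ClosedEst} depend only on $\sigma$ and $\Upsilon$, and not on the particular choice of solution or data, so that a single $N$ serves uniformly for both $u_1$ and $u_2$ and the subtraction step above is legitimate.
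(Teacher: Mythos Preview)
Your argument is correct and is exactly the intended one: the paper states the corollary without proof because it follows immediately from applying Theorem~\ref{ClosedEst} to the difference $v=u_1-u_2$ with $f\equiv 0$, precisely as you have written. There is nothing to add.
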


Theorem \ref{ClosedEst} permits us to estimate seminorms of $u$ which control all derivatives of order greater than $N$, uniformly in $\kappa$. 
%
%
In order to control the full Gevrey norm, we need to estimate the $H^N(I)$ norm of $u$. To do this, we recall that for a smooth function $g:I \to \C$ we have:
\[
\int_0^1 \abs{g}^2 dx \lesssim \abs{g(1)}^2 + \int_0^1 \abs{\p_xg}^2 dx.
\]

As a result, if we can estimate $\abs{u^{(n)}(1)}$ in terms of $f$ and its derivatives for $0\leq n \leq N$, then we can control $\norm{u}{\sigma, k, l}$. To do this, we return to the equation evaluated at $x=1$, and for notational convenience write $a_n := u^{(n)} (1)$ and $b_n := f^{(n)}(1)$. Differentiating \eq{shiftedeq} and setting $x=1$ yields:
\[
(\kappa +1) a_{n+2} + (\kappa (n+1) + 2 (n+1) + s+\lambda) a_{n+1} + (n(n+1) - N(N+1)) a_n = b_n.
\]
This is a second order difference equation, we wish to estimate $a_n$ with $0\leq n \leq N$ in terms of $b_n$. Our Dirichlet boundary condition at $x=1$ implies that $a_0 = 0$ and we have already estimated $a_{N+1}$, so letting $n$ vary between $0$ and $N-1$ we have a linear system of  $N$ equations for the $N$ unknowns $a_1, \ldots, a_N$. In fact, we may write this as the matrix equation:
\[
(A_N+\kappa B_N +(s+\lambda)I_N)w = v,
\]
where $A_N$ is the tridiagonal matrix:
\[
A_N= \left(\begin{array}{ccccc}2 & 1 & 0 & \cdots & 0 \\2-N(N+1) & 4 & 1 &   & \vdots \\0 & 6-N(N+1) & 6 &  & 0 \\\vdots &   &   & \ddots & 1 \\0 & \cdots &   & -2N & 2N\end{array}\right),
\]
$B_N$ is the banded upper-diagonal matrix:
\[
B_N =\left(\begin{array}{ccccc}1 & 1 & 0 & \cdots & 0    \\0 & 2 & 1 &   & \vdots    \\ 0&0&3 && \\   \vdots &   & & \ddots    & 0    \\  &   &   &  & 1    \\0 & \cdots &   & 0 & N   \end{array}\right),
\]
and the vectors $w$, $v$ are formed from $a_n, b_n$ as:
\be
w = \left(\begin{array}{c}a_1 \\\vdots \\a_{N}\end{array}\right),\qquad v=\left(\begin{array}{c}b_0 \\\vdots \\b_{N-2} \\ b_{N-1} - (\kappa+1)a_{N+1}\end{array}\right).
\ee
Clearly, if we fix $\Upsilon \cc \Omega_\sigma$ and $\kappa_0>0$, then provided we take $\lambda = \lambda(\Upsilon, \kappa_0, N)$ sufficiently large, we have a bound:
\[
\norm{w}{} \leq C \norm{v}{}
\]
for a constant $C$ which depends on $\Upsilon$, $\kappa_0$, but not on $s \in \Upsilon$ or $\kappa$ with $0\leq \kappa<\kappa_0$. As a consequence, we can show:
\begin{Theorem}\label{NormEst}
Suppose $\Upsilon \cc \Omega_{\sigma}$, fix $\kappa_0>0$, let $N_0$ be as in Theorem \ref{ClosedEst}, and suppose $N\geq N_0$. Let $\lambda = \lambda(\Upsilon, \kappa_0, N)$ be as above. Assume $u \in X^\sigma$ satisfies \eq{shiftedeq}. Then there exists a constant $C$ depending on $N, \Upsilon, \kappa_0$ but not $\kappa$ such that for any $0\leq \kappa<\kappa_0$ we have:
\[
 \norm{\p_x u}{\sigma, 0, 0}+\norm{\p_x u}{\sigma, 1, 0}+\norm{\p_x u}{\sigma, 2, 0} + \ssnorm{\p_x u}{\sigma}^0   \leq C\left( \norm{f}{\sigma, 0, 0}+\norm{f}{\sigma, 1, 0}+ \ssnorm{f}{\sigma}^0 \right).
\]
\end{Theorem}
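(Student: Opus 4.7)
The plan is to split the full norm into a high-order tail (derivatives of index $\geq N$), which is already controlled by Theorem \ref{ClosedEst} uniformly in $\kappa \in [0,\kappa_0)$, and a finite-dimensional low-order remainder (derivatives of index $\leq N$), which must be handled by a separate argument. The key reduction is that, thanks to the Poincaré-type inequality
\[
\int_0^1 \abs{u^{(n)}}^2 dx \lesssim \abs{u^{(n)}(1)}^2 + \int_0^1 \abs{u^{(n+1)}}^2 dx,
\]
applied iteratively for $n=0,1,\dots,N-1$ and closed at $n=N$ using the tail bound from Theorem \ref{ClosedEst}, everything on the low-order side reduces to estimating the $N$ boundary values $a_n := u^{(n)}(1)$, $1\leq n\leq N$, in terms of $b_n := f^{(n)}(1)$ and the already-bounded quantity $a_{N+1}$ (which is controlled by the trace of $\p_x u^{(N)}$ at $x=1$, estimated by Theorem \ref{ClosedEst} together with a standard trace inequality).

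For this boundary estimate I would proceed exactly as outlined before the theorem statement: differentiating \eq{shiftedeq} $n$ times, setting $x=1$, and using $a_0=0$ reduces the problem to the linear system
\[
(A_N + \kappa B_N + (s+\lambda)I_N)w = v,
\]
with $w,v$ as given in the excerpt. Once this system is shown to be uniformly solvable with $\norm{w}{} \leq C \norm{v}{}$ on $\Upsilon \times [0,\kappa_0]$, the desired bound on $a_1,\dots,a_N$ follows; combined with the Poincaré chain above this controls $\int_0^1 \abs{u^{(n)}}^2 dx$ and the weighted analogues for $n\leq N$, hence the finite segments of $\norm{\p_x u}{\sigma,0,0}$, $\norm{\p_x u}{\sigma,1,0}$ and $\norm{\p_x u}{\sigma,2,0}$. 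The missing low-order portion of the boundary seminorm $\ssnorm{\p_x u}{\sigma}^0$ reduces to bounding $u^{(n)}(0)$ for $n\leq N$, which can be read off from \eq{shiftedd} at $x=0$ as a second finite recursion (or recovered directly from the proof of Lemma \ref{Best} applied to a finite range of indices). Adding the tail bound from Theorem \ref{ClosedEst} then produces the full norm estimate.

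The main obstacle is establishing the uniform invertibility of $M_\kappa(s,\lambda) := A_N + \kappa B_N + (s+\lambda)I_N$ on the compact parameter set $\Upsilon \times [0,\kappa_0]$. Because $A_N + \kappa B_N$ is a fixed $N\times N$ matrix depending continuously on $\kappa$, its spectrum is confined to a compact subset of $\C$ whose diameter depends only on $N$ and $\kappa_0$; hence, by choosing $\lambda$ larger than this diameter plus $\sup_{s\in\Upsilon}\abs{s}+1$, the spectrum of $M_\kappa(s,\lambda)$ is uniformly bounded away from zero, yielding a uniform bound on $\norm{M_\kappa(s,\lambda)^{-1}}{}$ by continuity and compactness. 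This is precisely the role played by $\lambda$ in the definition of $\mathcal{K}$: without it, eigenvalues of $A_N$ could coincide with $-s$ and spoil invertibility, but a suitably large shift removes this possibility uniformly. The dependence of $\lambda$ on $N,\Upsilon,\kappa_0$ (and not on the particular $s$ or $\kappa$) is exactly what the statement of the theorem permits.
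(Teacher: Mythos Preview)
Your proposal is correct and follows essentially the same approach as the paper: split into the high-order tail controlled by Theorem~\ref{ClosedEst}, reduce the low-order piece via the Poincar\'e inequality to the boundary data $a_n = u^{(n)}(1)$, and control these through the finite linear system $(A_N + \kappa B_N + (s+\lambda)I_N)w = v$ whose uniform invertibility is guaranteed by the choice of $\lambda$. The paper's proof is terser but structurally identical.

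One small remark on your handling of the low-order boundary values $u^{(n)}(0)$ for $1\le n\le N$: invoking \eq{shiftedd} or Lemma~\ref{Best} directly is slightly delicate, since for $n<N$ the $n$-th derivative of \eq{shiftedeq} still carries the (nonvanishing) $\lambda$-polynomial terms, so \eq{shiftedd} in its stated form does not apply. The recursion at $x=0$ can still be made to work once you include those extra terms (which involve only the already-controlled $a_i$), but the cleanest route---and presumably what the paper has in mind under ``with the observations above''---is simply a Sobolev trace estimate: once the Poincar\'e chain and Theorem~\ref{ClosedEst} give you $\int_0^1 |u^{(n)}|^2$ and $\int_0^1 |u^{(n+1)}|^2$ for $1\le n\le N$, the bound on $|u^{(n)}(0)|$ is immediate.
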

\begin{proof}
From Theorem \ref{ClosedEst} we have:
\[
 \snorm{\p_x u}{\sigma, 0, 0}^N+\snorm{\p_x u}{\sigma, 1, 0}^N+\snorm{\p_x u}{\sigma, 2, 0}^N + \ssnorm{\p_x u}{\sigma}^N \leq C \left( \norm{f}{\sigma, 0, 0}+\norm{f}{\sigma, 1, 0}+ \ssnorm{f}{\sigma}^0 \right).
\]
In particular, this gives control of $a_{n+1}$ in the notation above by a trace estimate. Thus in particular,
\[
\norm{v}{} \leq C \left( \norm{f}{\sigma, 0, 0}+\norm{f}{\sigma, 1, 0}+ \ssnorm{f}{\sigma}^0 \right).
\]
Finally, we have that:
\[
\norm{w}{} \leq C \norm{v}{}
\]
As a consequence we deduce that:
\[
\norm{w}{} \leq C \left( \norm{f}{\sigma, 0, 0}+\norm{f}{\sigma, 1, 0}+ \ssnorm{f}{\sigma}^0 \right).
\]
With the observations above, this completes the proof.
\end{proof}
With this result, we're able to show that  we can solve \eq{shiftedeq} uniquely for $s \in \Omega_\sigma$, working in the appropriate Gevrey classes.
\begin{Theorem}\label{L0Thm}
Suppose $\Omega \cc \Omega_{\sigma}$, let $N_0$,  be as in Theorem \ref{ClosedEst}, and fix $N\geq N_0$ and $\lambda$ as in Theorem \ref{NormEst}. Then for any $s \in \Omega$, given $f \in Y^\sigma$ there exists a unique solution $u \in X^\sigma$ to \eq{shiftedeq} with $u(1)=0$. Setting $u = R(s) f$ for some operator $R(s) : Y^\sigma \to X^\sigma$, we have that $R(s)$ is holomorphic on $\Omega$.
\end{Theorem}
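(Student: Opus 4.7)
The plan is to construct the inverse $R(s)$ by a limiting procedure: solve the regularised equation \eq{shiftedeq} for $\kappa > 0$, then pass to the limit $\kappa \to 0^+$ using the $\kappa$-uniform a priori estimate of Theorem \ref{NormEst}. Uniqueness is essentially immediate: if $u \in X^\sigma$ solves \eq{shiftedeq} at $\kappa = 0$ with $f = 0$, then Theorem \ref{NormEst} at $\kappa = 0$ yields $\norm{u}{X^\sigma} = 0$, and hence $u = 0$.

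For existence, fix $f \in Y^\sigma$ and $s \in \Omega$. For each small $\kappa > 0$, I first produce $u_\kappa$ solving \eq{shiftedeq} in a Sobolev setting: choosing $k$ large enough that $\Omega \subset \tilde{\Omega}_k$, the Fredholm theory reviewed in the introduction gives that $L_{s,\kappa} : \tilde{\mathcal{D}}^k \to H^{k-1}(I)$ is Fredholm of index $0$, and $\mathcal{K}$ is a compact perturbation (a bounded multiplier composed with the compact Sobolev embedding $\tilde{\mathcal{D}}^k \hookrightarrow H^{k-1}(I)$, plus a finite-rank term). Any kernel element is smooth by ODE regularity at the regular singular point $x=0$, hence Gevrey by Theorem \ref{MorThm}, hence zero by Theorem \ref{NormEst}. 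The Fredholm alternative then produces a unique $u_\kappa \in \tilde{\mathcal{D}}^k$, which by the same regularity argument lies in $X^\sigma$, and Theorem \ref{NormEst} furnishes $\norm{u_\kappa}{X^\sigma} \leq C \norm{f}{Y^\sigma}$ with $C$ independent of $\kappa \in (0, \kappa_0)$ and of $s$ in a given compact subset of $\Omega$.

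Next, I extract a subsequence $u_{\kappa_n} \wto u$ weakly in $X^\sigma$ as $\kappa_n \to 0^+$. Weak lower semicontinuity places $u \in X^\sigma$ with the same bound, while Theorem \ref{compact} upgrades the convergence to strong in $Y^\sigma$. Passing to the limit in the weak form of \eq{shiftedeq} against smooth test functions, the $\kappa_n$-dependent term $\kappa_n \p_x(x \p_x u_{\kappa_n})$ vanishes: after integration by parts, it pairs against $\kappa_n$ times quantities controlled uniformly by $\norm{u_{\kappa_n}}{X^\sigma}$. The remaining terms pass by weak convergence, showing that $u$ solves \eq{shiftedeq} at $\kappa = 0$. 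This defines a bounded $R(s): Y^\sigma \to X^\sigma$, uniform on compact subsets of $\Omega$.

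Holomorphicity will then follow from the resolvent-type identity obtained by subtracting the defining equations at $s$ and $s'$ and applying $R(s)$:
\[
R(s) - R(s') \,=\, -(s-s')\, R(s)\, \p_x\, R(s').
\]
Combined with the boundedness of $\p_x: X^\sigma \to Y^\sigma$ from Theorem \ref{DThm} and the local boundedness of $R(\cdot)$, this yields Fr\'echet differentiability of $s \mapsto R(s)$ in the operator norm. The step I expect to be the main obstacle is the $\kappa_n \to 0$ passage in the equation: the degenerate coefficient $\kappa_n x$ multiplies second-derivative terms that $X^\sigma$ only controls in the $x^2$-weighted norm, so one must carefully justify that integrating by parts against a smooth test function transfers a derivative and leaves an overall factor $\kappa_n \to 0$ in the distributional pairing.
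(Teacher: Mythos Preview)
Your proposal is correct and follows essentially the same route as the paper: solve the $\kappa>0$ problem via Fredholm theory, use Theorem \ref{NormEst} for the $\kappa$-uniform bound and uniqueness, extract a convergent subsequence, pass to the limit, and establish holomorphicity via the resolvent identity $R(s)-R(s') = -(s-s')R(s)\p_x R(s')$. The only genuine difference is in the compactness step: the paper first observes that $\Omega \cc \Omega_{\sigma'}$ for some $\sigma'>\sigma$, so the $u_\kappa$ are uniformly bounded in the stronger space $X^{\sigma'}$, and then invokes the compact embedding $G_{\sigma',0,0} \hookrightarrow G_{\sigma,0,0}$ to extract a strongly convergent subsequence in $G_{\sigma,0,0}$. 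Your route (weak compactness in $X^\sigma$ plus Theorem \ref{compact}) is equally valid and arguably more direct. Your worry about the $\kappa_n\to 0$ passage is unfounded: strong convergence in $Y^\sigma$ (or $G_{\sigma,0,0}$) controls all Sobolev norms and hence all derivatives in $C^0(\overline{I})$, so $\kappa_n(x u_{\kappa_n}'' + u_{\kappa_n}')\to 0$ uniformly without any integration by parts.
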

\begin{proof}
By continuity of the bounds defining $\mathcal{D}^\sigma$, we can find $\sigma'>\sigma$ such that $\Omega\subset D_{\sigma'}$. Fix a sequence $0<\kappa_i<\kappa_0$ with $\kappa_i \to 0$. We know that problem \eq{shiftedeq} is Fredholm for $\kappa >0$. The estimate of Theorem \ref{NormEst} shows that solutions to  \eq{shiftedeq} are unique, hence for each $\kappa_i$ we have $u_i \in X^{\sigma'}$ solving:
\[
\frac{d}{dx}\left( (\kappa_i x + x^2) \frac{d u_i}{dx} \right) + s \frac{du_i}{dx} -N(N+1) u_i+ \lambda \sum_{j=0}^{N-1} \frac{(x-1)^j}{j!} u_i^{(j+1)}(1) = f
\]
and moreover $\norm{u_i}{X^{\sigma}}$ and $\norm{u}{\sigma', 0, 0}$ are uniformly bounded. Hence, we can extract a subsequence which converges strongly in $G_{\sigma,0,0}$ to $u \in X^\sigma$ as $\kappa \to 0$, which in particular implies:
\[
\frac{d}{dx}\left( x^2\frac{d u}{dx} \right) + s \frac{du}{dx} -N(N+1) u+ \lambda \sum_{j=0}^{N-1} \frac{(x-1)^j}{j!} u^{(j+1)}(1)= f .
\]
Uniqueness follows immediately from Theorem \ref{NormEst} with $\kappa = 0$. To see that $R(s)$ is holomorphic, we set $u_s := R(s) f$, so that:
\[
\frac{d}{dx}\left( x^2\frac{d u_s}{dx} \right) + s \frac{du_s}{dx} -N(N+1) u_s + \lambda \sum_{j=0}^{N-1} \frac{(x-1)^j}{j!} u_s^{(j+1)}(1)= f
\]
A simple calculation shows that $w = u_s - u_{s'}$ satisfies:
\[
\frac{d}{dx}\left( x^2\frac{d w}{dx} \right) + s \frac{dw}{dx} -N(N+1) w+ \lambda \sum_{j=0}^{N-1} \frac{(x-1)^j}{j!} w^{(j+1)}(1)= -(s-s') \frac{du_{s'}}{dx}
\]
By our estimates above, we have that:
\[
\norm{w}{X^{\sigma}} \leq C \abs{s-s'} \norm{f}{Y^{\sigma}}
\]
so that $R(s):Y^\sigma \to X^\sigma$ is continuous in the operator norm topology as $s$ varies. Moreover 
\[
\frac{R(s) - R(s')}{s-s'} = -R(s) \circ \frac{d}{dx} \circ R(s') \to -R(s) \circ \frac{d}{dx} \circ R(s)
\]
as $s' \to s$, where the convergence is again in operator norm and we use that $\frac{d}{dx} : X^\sigma \to Y^\sigma$ is bounded.
\end{proof}
This completes the proof of Proposition \ref{mainprop}. We include here some further useful results.
\begin{Lemma}
Fix $\sigma >0$. Then for $s \in \Omega_\sigma$, the domain of the operator $\mathcal{D}(\mathcal{L}_s)$ (defined as the set of $u \in X^\sigma$ such that $\mathcal{L}_s u \in Y^\sigma$) depends on $\sigma$, but not on $s$. That is to say $\mathcal{D}(\mathcal{L}_s)= \mathcal{D}(\mathcal{L}_{s'})=:\mathcal{D}^\sigma$ for $s, s' \in \Omega_\sigma$.
\end{Lemma}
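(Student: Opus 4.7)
The plan is to exploit the fact that $\mathcal{L}_s$ and $\mathcal{L}_{s'}$ differ only by a first-order term. Explicitly, from the definition
\[
\mathcal{L}_s u = \frac{d}{dx}\left(x^2 \frac{du}{dx}\right) + s \frac{du}{dx} - W u,
\]
we see that $\mathcal{L}_s - \mathcal{L}_{s'} = (s - s')\,D$, where $D : u \mapsto \frac{du}{dx}$ is the differentiation operator studied in Theorem \ref{DThm}.

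Now take any $u \in \mathcal{D}(\mathcal{L}_s)$, so that $u \in X^\sigma$ and $\mathcal{L}_s u \in Y^\sigma$. By Theorem \ref{DThm}, $D : X^\sigma \to Y^\sigma$ is bounded, hence $Du \in Y^\sigma$. Writing
\[
\mathcal{L}_{s'} u = \mathcal{L}_s u - (s-s')\, Du,
\]
we conclude that $\mathcal{L}_{s'} u$ is the sum of two elements of $Y^\sigma$, and therefore $\mathcal{L}_{s'} u \in Y^\sigma$. This gives $u \in \mathcal{D}(\mathcal{L}_{s'})$, and hence $\mathcal{D}(\mathcal{L}_s) \subseteq \mathcal{D}(\mathcal{L}_{s'})$. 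Interchanging the roles of $s$ and $s'$ yields the reverse inclusion, so the two domains coincide.

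There is really no main obstacle here: the only input required from the general theory is the boundedness of $D : X^\sigma \to Y^\sigma$ (Theorem \ref{DThm}), which is precisely built into the definitions of the spaces $X^\sigma$ and $Y^\sigma$. The role of the hypothesis $s \in \Omega_\sigma$ is only to ensure that $\mathcal{L}_s$ has been set up as a closed operator in the Fredholm framework of Proposition \ref{mainprop}; the equality of domains itself is a purely algebraic consequence of the identity $\mathcal{L}_s - \mathcal{L}_{s'} = (s-s')D$ combined with the mapping properties of $D$.
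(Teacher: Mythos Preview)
Your proof is correct and follows exactly the same approach as the paper's own proof: use Theorem~\ref{DThm} to show that $(\mathcal{L}_s-\mathcal{L}_{s'})u=(s-s')Du\in Y^\sigma$ for any $u\in X^\sigma$, whence $\mathcal{L}_s u\in Y^\sigma$ if and only if $\mathcal{L}_{s'}u\in Y^\sigma$. Your additional observation that the hypothesis $s\in\Omega_\sigma$ is not actually needed for the domain equality itself is also correct.
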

\begin{proof}
By Theorem \ref{DThm}  we know that if $u \in X^\sigma$, then $(\mathcal{L}_s-\mathcal{L}_{s'})u \in Y^\sigma$ and hence $\mathcal{L}_s u \in Y^\sigma \iff \mathcal{L}_{s' }u \in Y^\sigma$.
\end{proof}
Next, we show that the construction of quasinormal modes above is independent of the choice of $\sigma$, i.e. the location of the poles of $\mathcal{L}_s^{-1} : Y^\sigma \to Y^\sigma$ does not depend on the choice of $\sigma$. In order to keep track of $\sigma$, we define $\textrm{Ker}_\sigma \mathcal{L}_s := \{ u \in \mathcal{D}^\sigma | \mathcal{L}_s u = 0\}$; $\textrm{Ran}_\sigma\mathcal{L}_s:= \mathcal{L}_s \mathcal{D}^\sigma \subset Y^\sigma$; $\textrm{Coker}_\sigma\mathcal{L}_s := Y^\sigma / \textrm{Ran}_\sigma \mathcal{L}_s$. 

\begin{Theorem}
Let $0<\sigma<\sigma'$ and suppose $s \in \Omega_\sigma \cap \Omega_{\sigma'}$. Then 
\[
\textrm{dim }\textrm{Ker}_\sigma \mathcal{L}_s = \textrm{dim }\textrm{Ker}_{\sigma'} \mathcal{L}_s.
\]
\end{Theorem}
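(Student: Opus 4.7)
The plan is to combine the trivial inclusion of the Gevrey spaces with an ODE dimension count, and to close by matching the two meromorphic resolvents where they coexist.

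First, the definitions of the weighted Gevrey norms give continuous inclusions $X^{\sigma'} \hookrightarrow X^\sigma$ and $Y^{\sigma'} \hookrightarrow Y^\sigma$, so $\textrm{Ker}_{\sigma'} \mathcal{L}_s \subset \textrm{Ker}_\sigma \mathcal{L}_s$ and hence $\dim \textrm{Ker}_{\sigma'} \mathcal{L}_s \le \dim \textrm{Ker}_\sigma \mathcal{L}_s$. Next I would observe that for each $\tau$, $\dim \textrm{Ker}_\tau \mathcal{L}_s \le 1$: any $u$ in the kernel is a $C^\infty(\overline{I})$ solution of the second-order ODE $x^2 u'' + (2x+s) u' - Wu = 0$ on $(0,1)$, and since the full solution space of this ODE is two-dimensional while the Dirichlet condition $u(1) = 0$ cuts the dimension by one, the kernel is at most one-dimensional.

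For the reverse inequality I would compare the two meromorphic resolvents $R_\tau(s) := \mathcal{L}_s^{-1} : Y^\tau \to X^\tau$ (for $\tau \in \{\sigma, \sigma'\}$) furnished by Proposition \ref{mainprop}. For $f \in Y^{\sigma'} \subset Y^\sigma$ at any $s$ at which both resolvents are holomorphic, uniqueness of solutions in $X^\sigma$ to $\mathcal{L}_s u = f$ forces $R_\sigma(s) f = R_{\sigma'}(s) f$, and this identity extends meromorphically to all of $\Omega_\sigma \cap \Omega_{\sigma'}$. Consequently, at every point $s_0$ the Laurent expansions of the two resolvents agree after restriction to $Y^{\sigma'}$: $A_k^\sigma|_{Y^{\sigma'}} = A_k^{\sigma'}$ for every $k$, where $A_k^\tau$ denotes the $(s-s_0)^k$-coefficient of $R_\tau$ at $s_0$.

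Now suppose $\dim \textrm{Ker}_{\sigma'} \mathcal{L}_{s_0} = 0$. Then $\mathcal{L}_{s_0} : X^{\sigma'} \to Y^{\sigma'}$ is invertible (by Fredholm index $0$), so $R_{\sigma'}$ is holomorphic at $s_0$ and $A_k^{\sigma'} = 0$ for all $k < 0$. The matching above then gives $A_k^\sigma|_{Y^{\sigma'}} = 0$ for $k < 0$. Combined with density of $Y^{\sigma'}$ in $Y^\sigma$ and continuity of the bounded finite-rank operators $A_k^\sigma : Y^\sigma \to X^\sigma$, this forces $A_k^\sigma \equiv 0$ on $Y^\sigma$ for $k < 0$, so $R_\sigma$ is holomorphic at $s_0$ and $\dim \textrm{Ker}_\sigma \mathcal{L}_{s_0} = 0$ as well. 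Together with the earlier inequality this yields equality of dimensions in all cases. The main obstacle is the density of $Y^{\sigma'}$ in $Y^\sigma$, which reduces to showing that polynomials are dense in the weighted Hilbert space $Y^\sigma$: the tail of the defining Gevrey series can be made small using the finiteness of $\|\cdot\|_{Y^\sigma}$, while the truncated piece can be approximated by a polynomial via Weierstrass-type approximation applied to each derivative. The remaining ingredients---the ODE dimension bound and the matching of Laurent coefficients via meromorphic continuation---are routine consequences of Proposition \ref{mainprop} and the holomorphic dependence of $\mathcal{L}_s$ on $s$.
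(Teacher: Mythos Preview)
Your argument is correct, but it follows a different route from the paper's. The paper argues purely via Fredholm duality: it uses that $\mathcal{L}_s$ has index zero to identify $\dim\textrm{Ker}_\tau\mathcal{L}_s=\dim\textrm{Coker}_\tau\mathcal{L}_s$ for each $\tau$, realises the cokernels as finite-dimensional subspaces of $(Y^\tau)^*$, and then observes that restriction of functionals gives an injection $\textrm{Coker}_\sigma\mathcal{L}_s\hookrightarrow\textrm{Coker}_{\sigma'}\mathcal{L}_s$ (since $\textrm{Ran}_{\sigma'}\mathcal{L}_s\subset\textrm{Ran}_\sigma\mathcal{L}_s$). This yields the reverse inequality without any reference to the ODE dimension count or to the meromorphic resolvents.

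Your approach trades that abstract step for two more concrete ingredients: the bound $\dim\textrm{Ker}_\tau\mathcal{L}_s\le 1$ coming from the second-order ODE plus the Dirichlet condition, and the matching of the Laurent expansions of $R_\sigma$ and $R_{\sigma'}$ on $Y^{\sigma'}$. The advantage of the paper's argument is that it does not use the one-dimensionality of the problem at all (so it transfers to situations where the kernel may have higher dimension, such as the companion paper on extremal Reissner--Nordstr\"om). The advantage of yours is that it makes explicit the resolvent identity $R_\sigma|_{Y^{\sigma'}}=R_{\sigma'}$, which is useful in its own right. Note, incidentally, that the paper's cokernel injection is itself implicitly using the density of $Y^{\sigma'}$ in $Y^\sigma$ (otherwise restriction of functionals need not be injective), so the ``main obstacle'' you flag is present in both approaches; you are simply more explicit about it. Your sketch for density via polynomial approximation is reasonable, though the cleanest way to see it is to take $p$ of degree $N-1$ matching the first $N$ Taylor coefficients of $u$ at a point, so that $\partial^n p=0$ for $n\ge N$ and the head is controlled by the tail of $u$ via Taylor's theorem with remainder.
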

\begin{proof}
Since by Proposition \ref{mainprop} the map $\mathcal{L}_s:\mathcal{D}^\sigma \to Y^\sigma$ is Fredholm, we know that $\textrm{dim }\textrm{Ker}_\sigma \mathcal{L}_s=\textrm{dim }\textrm{Coker}_\sigma \mathcal{L}_s<\infty$, and that $\textrm{Ran}_\sigma \mathcal{L}_s$ is closed in $Y^\sigma$. In particular by the closed graph theorem we can write $\textrm{Ran}_\sigma \mathcal{L}_s = \{ u \in Y^\sigma | \omega_i(u) = 0, i=1, \ldots, k\}$, where $\omega_1, \ldots \omega_k$ is a linearly independent subset of $(Y^\sigma)^*$ and $k =\textrm{dim }\textrm{Coker}_\sigma \mathcal{L}_s$. As a result, we can identify $\textrm{Coker}_\sigma \mathcal{L}_s\cong \langle \omega_1, \ldots, \omega_k \rangle$.

Now, suppose $u \in \textrm{Ker}_{\sigma'} \mathcal{L}_s$. Since $\mathcal{D}^{\sigma'} \subset \mathcal{D}^\sigma$, we immediately have that $u \in \textrm{Ker}_{\sigma} \mathcal{L}_s$ and so:
\[
\textrm{dim }\textrm{Ker}_\sigma \mathcal{L}_s \geq \textrm{dim }\textrm{Ker}_{\sigma'} \mathcal{L}_s.
\]
Next suppose that $\omega \in \textrm{Coker}_\sigma \mathcal{L}_s$, where as above we have identified the cokernel with a subspace of $(Y^\sigma)^*$. Then $\omega(u) = 0$ for all $u \in \textrm{Ran}_\sigma\mathcal{L}_s$. However, $\textrm{Ran}_{\sigma'}\mathcal{L}_s \subset \textrm{Ran}_\sigma\mathcal{L}_s$ since $\mathcal{D}^{\sigma'} \subset D^{\sigma}$. Thus  $\omega(u) = 0$ for all $u \in \textrm{Ran}_{\sigma'}\mathcal{L}_s$, hence $\omega \in \textrm{Coker}_{\sigma'} \mathcal{L}_s$. This implies:
\[
\textrm{dim }\textrm{Coker}_\sigma \mathcal{L}_s \leq \textrm{dim }\textrm{Coker}_{\sigma'} \mathcal{L}_s.
\]
Using the Fredholm property, we are done.
\end{proof}

Now we establish an elliptic regularity result away from $x=0$.
\begin{Theorem}\label{IntReg}
Suppose $u \in H^2(I)$  has support in $[\delta, 1]$ for some $\delta>0$, vanishes at $x=1$ in the trace sense and satisfies $\mathcal{L}_s u = f$ for some $f \in Y^\sigma$. Then in fact $u \in X^\sigma$ and 
\[
\norm{u}{X^\sigma} \leq C\left(  \norm{f}{Y^\sigma} + \norm{u}{L^2(I)} \right)
\]
for some constant depending on $\sigma, \delta, W$. 
\end{Theorem}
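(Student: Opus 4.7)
The plan is to exploit that on the support of $u$ the operator $\mathcal{L}_s$ is uniformly elliptic (since $x^2 \geq \delta^2 > 0$ there), which reduces the problem to interior Gevrey regularity away from the degeneracy at $x=0$. I would follow the same differentiation-and-multiplier scheme as in Theorem \ref{MorThm}, but the situation is considerably simpler because the weights $x^k$ are equivalent to constants on the support of $u$ and all boundary contributions at $x=0$ vanish.

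First I would bootstrap smoothness: rewriting the equation on $[\delta,1]$ as $u'' = x^{-2}(f + Wu - (2x+s) u')$ and iterating classical elliptic regularity yields $u \in C^\infty([\delta,1])$. The support condition combined with smoothness forces $u^{(n)}(\delta)=0$ for every $n$, so $u \in C^\infty(\overline I)$ vanishes to infinite order at $x=0$. In particular $\ssnorm{\partial_x u}{\sigma}^0=0$, and only the bulk seminorms need to be controlled.

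Next I would differentiate $\mathcal{L}_s u = f$ $n$ times by Leibniz, obtaining
\[
x^2 u^{(n+2)} + (2(n+1)x + s) u^{(n+1)} + n(n+1) u^{(n)} = f^{(n)} + (Wu)^{(n)},
\]
multiply by $\overline{u^{(n+1)}}$, take real parts, and integrate over $I$. The $x=0$ boundary terms vanish, the $x=1$ contributions $\tfrac{1}{2}|u^{(n+1)}(1)|^2 + \tfrac{n(n+1)}{2}|u^{(n)}(1)|^2$ appear on the left with a good sign, and the bulk principal term is $(2n+1)\int x|u^{(n+1)}|^2 \geq (2n+1)\delta \int |u^{(n+1)}|^2$. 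For $n$ larger than some $N_0 = N_0(|s|,\delta)$ the error $-\Re s \int |u^{(n+1)}|^2$ is absorbed. Multiplying by $\sigma^{2n}/(n!^2(n+1)!^2)$ and summing over $n \geq N_0$ yields a closed bound on the tail of the Gevrey norm in terms of $\snorm{f}{\sigma,0,0}^{N_0}$, $\snorm{Wu}{\sigma,0,0}^{N_0}$, and traces of $u$ at $x=1$, the last of which are controlled by the same finite-dimensional difference equation used in the proof of Theorem \ref{NormEst}.

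The main obstacle is the cyclic appearance of $u$ through the product $(Wu)^{(n)}$ on the right-hand side. This is precisely the setting of Theorem \ref{ProdThm}, which gives $\snorm{Wu}{\sigma,0,0}^{N_0} \leq C_W \snorm{u}{\sigma,0,0}^{N_0}$ with a constant controlled by the ratio $\sigma/\sigma'<1$; combined with the tail-absorption strategy familiar from Theorem \ref{ClosedEst} (choosing $N_0$ large enough that the coefficient $(2n+1)\delta$ strictly dominates $C_W$), this closes the estimate. The low-derivative portion $n<N_0$ is controlled by classical Sobolev elliptic regularity on $[\delta,1]$, giving $\|u\|_{H^{N_0+1}([\delta,1])} \leq C(\|f\|_{H^{N_0}} + \|u\|_{L^2}) \leq C(\|f\|_{Y^\sigma} + \|u\|_{L^2})$, which together with the tail estimate completes the proof.
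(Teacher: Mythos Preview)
Your approach is correct and essentially matches the paper's: both differentiate, apply a multiplier estimate, exploit $x\geq\delta$ on the support of $u$ to close the tail seminorms for large $n$, invoke Theorem~\ref{ProdThm} for the $Wu$ term, and finish the low orders $n<N_0$ by classical elliptic regularity. The paper reuses the estimate of Lemma~\ref{GMest} (multiplier $(1+nx/\sigma)\overline{u^{(n+1)}}$) and closes via the hierarchy $\snorm{\partial_x u}{\sigma,2,0}\gtrsim (N'\delta/\sigma)\snorm{\partial_x u}{\sigma,1,0}\gtrsim (N'\delta/\sigma)^2\snorm{\partial_x u}{\sigma,0,0}$ rather than your simpler direct multiplier, and note that in your identity the $x=1$ boundary contributions already appear with a good sign and can simply be dropped, so the appeal to Theorem~\ref{NormEst} is unnecessary.
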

\begin{proof}
By a standard elliptic estimate, we know that $u \in C^\infty(\overline{I})$, $u(1)=0$. Fix $0<N'\leq M$. We return to the result of Lemma \ref{GMest} with $\kappa = 0$ and $N = 0$ to deduce:
\begin{align*}
&\left(2 \sigma+\Re(s) \right) \left( \snorm{\p_x u}{\sigma, 1, 0}^{N', M}\right)^2 + 2\sigma \left( \snorm{\p_x u}{\sigma, 2, 0}^{N', M}\right)^2  \\
&\qquad \leq \left( \textstyle\frac{\sigma}{2(N'+1)} -\Re(s)\right)\left(\snorm{\p_x u}{\sigma, 0, 0}^{N', M} \right)^2  \\
&\quad \qquad + \left( \snorm{\p_x u}{\sigma, 0, 0}^{N', M} + \snorm{\p_x u}{\sigma, 1, 0}^{N', M}\right) \left( \snorm{f-Wu}{\sigma, 0, 0}^{N', M} + \snorm{f-Wu}{\sigma, 1, 0}^{N', M}\right) \\
&\qquad \qquad + {\textstyle\frac{\sigma^{2N'}}{N'!^2 (N'+1)!^2} \frac{N'}{2 \sigma}  N'(N'+1)}\int_0^1 \abs{u^{(N')}}^2 dx 
\end{align*}
From here, making use of the remark under Theorem \ref{ProdThm} we deduce:
\[
\left( \snorm{\p_x u}{\sigma, 2, 0}^{N', M}\right)^2 \leq c \left(\left( \snorm{\p_x u}{\sigma, 0, 0}^{N', M}\right)^2+ \left( \snorm{\p_x u}{\sigma, 1, 0}^{N', M}\right)^2 + \left( \snorm{f}{\sigma, 1, 0}^{N', M}\right)^2\right) + C(N') \norm{u}{H^{N'}(I)}^2
\]
for some $c$ independent of $N'$. Now, since $\textrm{supp }u \subset [\delta, 1]$, we have 
\[
\left( \snorm{\p_x u}{\sigma, 2, 0}^{N', M}\right)^2 \geq N' \frac{\delta}{\sigma} \left( \snorm{\p_x u}{\sigma, 1, 0}^{N', M}\right)^2\geq N'^2 \frac{\delta^2}{\sigma}^2 \left( \snorm{\p_x u}{\sigma, 0, 0}^{N', M}\right)^2
\]
As a consequence, for $N'$ taken sufficiently large we deduce
\[
\left( \snorm{\p_x u}{\sigma, 0, 0}^{N', M}\right)^2+\left( \snorm{\p_x u}{\sigma, 1, 0}^{N', M}\right)^2+ \left( \snorm{\p_x u}{\sigma, 2, 0}^{N', M}\right)^2  \leq c \left( \snorm{f}{\sigma, 1, 0}^{N', M}\right)^2 + C(N') \norm{u}{H^{N'}(I)}^2
\]
A standard elliptic estimate gives
\[
\norm{u}{H^{N'+1}(I)}^2 \leq C(N') \norm{f}{H^{N'-1}(I)}^2 +  \norm{u}{L^2(I)}^2
\]
so that:
\[
\left( \snorm{\p_x u}{\sigma, 0, 0}^{0, M}\right)^2+\left( \snorm{\p_x u}{\sigma, 1, 0}^{0, M}\right)^2+ \left( \snorm{\p_x u}{\sigma, 2, 0}^{0, M}\right)^2  \leq c\left(  \left( \snorm{f}{\sigma, 1, 0}^{0, M}\right)^2 +\norm{u}{L^2(I)}^2\right)
\]
where the constant $c$ is independent of $M$. Sending $M$ to infinity we are done.
\end{proof}
Note that in the result above we do not require $s$ to be a point at which $\mathcal{L}_s$ is invertible. If we are at such a point then we can drop the $L^2$-norm of $u$ from the right-hand side of the estimate. 

Finally, we show that when $f\in Y^\sigma$ is supported away from $x=0$ then we can control the $H^2$-norm of the solution to $\mathcal{L}_s u = f$ by the $L^2$-norm of $f$, which will permit us to relax our regularity conditions for data supported away from $x=0$.
\begin{Theorem}
Suppose that $\mathcal{L}_s:\mathcal{D}^\sigma \to Y^\sigma$ is invertible at $s_0 \in \Omega_\sigma$ and fix $\delta>0$. Then there exists a neighbourhood $U \subset \Omega_\sigma$ of $s_0$ and a constant $C$ depending on $U, \delta, W$ such  for any $f \in Y^\sigma$ with support in $[\delta, 1]$ for $\delta >0$ we have:
\ben{MollEst}
\norm{\mathcal{L}_s^{-1} f}{H^2(I)} \leq C\norm{f}{L^2(I)}
\een
for all $s \in U$.
\end{Theorem}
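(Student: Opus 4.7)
My approach is a contradiction-plus-compactness argument that leverages the meromorphy of $s \mapsto \mathcal{L}_s^{-1}: Y^\sigma \to X^\sigma$ established in Proposition \ref{mainprop}. First, since $\mathcal{L}_{s_0}$ is invertible and the poles of the resolvent are isolated, I pick a relatively compact neighbourhood $\overline{U} \cc \Omega_\sigma$ of $s_0$ containing no pole, on which $\mathcal{L}_s: \mathcal{D}^\sigma \to Y^\sigma$ remains invertible; recall $X^\sigma \hookrightarrow H^2(I)$ continuously. If the claimed estimate fails on every such neighbourhood, I extract $s_n \to s_0$ and $f_n \in Y^\sigma$ with $\textrm{supp}(f_n) \subset [\delta, 1]$, $\norm{f_n}{L^2(I)} = 1$, and $\norm{u_n}{H^2(I)} \to \infty$, where $u_n := \mathcal{L}_{s_n}^{-1} f_n$. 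Normalising $v_n := u_n / \norm{u_n}{H^2(I)}$ and $g_n := f_n / \norm{u_n}{H^2(I)}$, I get $\mathcal{L}_{s_n} v_n = g_n$ with $\norm{v_n}{H^2(I)} = 1$, $\textrm{supp}(g_n) \subset [\delta, 1]$, and $\norm{g_n}{L^2(I)} \to 0$.

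Next, Rellich--Kondrachov provides a subsequence with $v_n \wto v$ in $H^2(I)$ and $v_n \to v$ strongly in $H^1(I)$. On $[\delta/2, 1]$, the operator $\mathcal{L}_s$ is uniformly elliptic (since $x^2 \geq \delta^2/4$) with smooth coefficients depending continuously on $s$. Standard interior elliptic estimates applied to $\mathcal{L}_{s_n}(v_n - v_m) = (g_n - g_m) + (\mathcal{L}_{s_m} - \mathcal{L}_{s_n}) v_m$ upgrade the convergence to $v_n \to v$ in $H^2([\delta/2, 1])$, whence $\mathcal{L}_{s_0} v = 0$ on $(\delta/2, 1]$ and $v(1) = 0$.

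The remaining step---and the main obstacle---is to show $v \in \mathcal{D}^\sigma$: for then the invertibility of $\mathcal{L}_{s_0}$ forces $v = 0$, which combined with the $H^2$-convergence across both subintervals yields $v_n \to 0$ in $H^2(I)$, contradicting $\norm{v_n}{H^2(I)} = 1$. On $[0, \delta/2]$, $g_n \equiv 0$, so each $v_n$ lies in the homogeneous kernel of $\mathcal{L}_{s_n}$, and since $v_n \in X^\sigma$ it must lie in the one-dimensional Gevrey-regular branch $\textrm{span}\{\phi_{s_n}\}$---the companion branch behaving like $e^{s/x}$ being excluded by Lemma \ref{expLem} together with the condition $s \in \Omega_\sigma$. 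Applying Theorems \ref{MorThm}--\ref{ClosedEst} to the homogeneous equation on $[0, \delta/2]$, with Cauchy data prescribed at the inner boundary $x = \delta/2$, one obtains that $s \mapsto \phi_s$ is holomorphic on $\overline{U}$ with uniform $X^\sigma$-bounds. Writing $v_n|_{[0, \delta/2]} = c_n \phi_{s_n}$ with $c_n$ determined by the matching data $(v_n(\delta/2), v_n'(\delta/2))$, the $H^2$-convergence on $[\delta/2, 1]$ forces $c_n \to c$, hence $v_n|_{[0, \delta/2]} \to c \phi_{s_0}$ in $X^\sigma$. Therefore $v \in X^\sigma \cap \ker \mathcal{L}_{s_0} = \{0\}$, closing the argument. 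The hard part is thus the construction of $\phi_s$ with analytic $s$-dependence and uniform $X^\sigma$-bounds---essentially a homogeneous, inner-boundary version of the estimates already developed in this section, combined with Lemma \ref{expLem} to rule out the non-Gevrey branch.
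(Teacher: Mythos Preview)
Your overall architecture—contradiction plus compactness, with ellipticity handling $[\delta/2,1]$ and Gevrey analysis handling $[0,\delta/2]$—matches the paper's. The paper differs organisationally by first proving an a priori bound $\norm{u}{H^2(I)}\leq C\bigl(\norm{f}{L^2}+\norm{u}{L^2}\bigr)$ and then running the contradiction with $\norm{u_k}{L^2}=1$ rather than $\norm{v_n}{H^2}=1$.

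However, the step you flag as ``the hard part'' is left unexecuted, and the theorems you cite do not supply it. Theorems~\ref{MorThm}--\ref{ClosedEst} concern the \emph{shifted} operator $L_{s,\kappa}+\mathcal{K}$ on the fixed interval $[0,1]$ with Dirichlet condition at $x=1$; they do not directly produce a Gevrey-regular homogeneous solution $\phi_s$ to $\mathcal{L}_s u=0$ on $[0,\delta/2]$ with data at an interior endpoint, let alone one with uniform $X^\sigma$-bounds and holomorphic $s$-dependence. Lemma~\ref{expLem} is also insufficient on its own to exclude the singular branch once $W\neq 0$, since the two linearly independent solutions are no longer explicit. The paper fills this gap by a rescaling: with $w(x)=u(\delta x)-u(\delta)$, the homogeneous problem on $[0,\delta]$ becomes $\tilde{\mathcal L}_{s\delta^{-1}}w=u(\delta)\tilde W$ on $[0,1]$ with $w(1)=0$, to which Proposition~\ref{mainprop} applies (after adjusting $\delta$ so that $s\delta^{-1}$ avoids the rescaled resolvent's poles), yielding an estimate of the form $\norm{u|_{[0,\delta]}}{X^\sigma}\lesssim|u(\delta)|$. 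This is precisely the content of your $\phi_s$ construction, carried out rigorously. A second gap: to conclude $v\in X^\sigma$ globally you need Gevrey regularity of the limit on $[\delta/2,1]$ as well—not merely $H^2$-convergence there—which the paper obtains from Theorem~\ref{IntReg} applied to a Gevrey cut-off of the limit. Once both ingredients are supplied, your argument and the paper's coincide.
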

\begin{proof}
Pick $U\cc \Omega_\sigma$ a neighbourhood of $s_0$ such that $\mathcal{L}_s:\mathcal{D}^\sigma \to Y^\sigma$ is invertible for $s \in U$. Setting $u = \mathcal{L}^{-1}_sf$, we know that $u$ satisfies
\[
\frac{d}{dx} \left(x^2 \frac{du}{dx} \right) + s \frac{du}{dx} - W u = f
\]
with $u(1) = 0$. Let $w(x) = u(\delta x) - u(\delta)$. This satisfies
\[
\tilde{\mathcal{L}}_{s\delta^{-1}}w:=\frac{d}{dx} \left(x^2 \frac{dw}{dx} \right) + \frac{s}{\delta} \frac{dw}{dx} - \tilde{W} w= u(\delta) W
\]
and $w(1) = 0$, where $\tilde{W}(x) = W(\delta x)$. Noting that this operator is invertible except at isolated values of $s\delta^{-1}$, by making $\delta$ slightly smaller if necessary, and shrinking $U$ we can assume $\tilde{\mathcal{L}}_{s\delta^{-1}}:\mathcal{D}^\sigma \to Y^\sigma$ is invertible for $s \in U$ and moreover $w \in X^{\sigma\delta^{-1}}$ with:
\[
\norm{\p_x w}{\sigma\delta^{-1}, 0, 0}+\snorm{\p_x w}{\sigma\delta^{-1}, 1, 0}^0+\snorm{\p_x w}{\sigma\delta^{-1}, 2, 0}^0 + \ssnorm{\p_x w}{\sigma\delta^{-1}}^0 \leq C\abs{u(\delta)} \leq C \norm{\p u}{L^2(I)}^\frac{1}{2} \norm{ u}{L^2(I)}^\frac{1}{2},
\]
where we have used a standard trace estimate in the second inequality, and the constants may depend on $U, \delta, W$ but not on $w$. Changing variables back, we deduce:
\ben{ACEst1}
\norm{\p_x u|_{[0,\delta]}}{\sigma, 0, 0}+\snorm{\p_x u|_{[0,\delta]}}{\sigma, 1, 0}^0+\snorm{\p_x u|_{[0,\delta]}}{\sigma, 2, 0}^0 + \ssnorm{\p_x u}{\sigma}^0 \leq C \norm{\p u}{L^2(I)}^\frac{1}{2} \norm{ u}{L^2(I)}^\frac{1}{2}.
\een
By the notation $\norm{ u|_{[0,\delta]}}{\sigma, k, l}$ we mean the $\norm{u}{\sigma, k, l}$ seminorm defined as above, but with the range of integration restricted to $[0, \delta]$. In particular this implies
\[
\norm{u}{H^2([0,\delta])} \leq C \norm{u}{L^2(I)}
\]
Standard elliptic estimates give
\[
\norm{u}{H^2([\delta,1])} \leq C\left( \norm{f}{L^2(I)} + \norm{u}{L^2(I)}\right)
\]
so that we have the global estimate
\[
\norm{u}{H^2(I)} \leq C \left( \norm{f}{L^2(I)} + \norm{u}{L^2(I)}\right)
\]
which holds for all $f \in Y^\sigma$ with support in $[\delta, 1]$, and where the constant depends on $U, \delta, W$.

Next we show that if $V \cc U$, then there exists a constant $C$ such that for any $s \in W$ and $f \in Y^\sigma$ with support in $[\delta, 1]$ we have:
\[
\norm{u}{H^2(I)} \leq C_{\delta}  \norm{f}{L^2(I)}.
\]
Suppose not, then there exists a sequence of $s_k \in V$ and $f_k \in Y^\sigma$ with support in $[\delta, 1]$ such that, letting $u_k = \mathcal{L}^{-1}_{s_k} f_k$ we have $\norm{u_k}{L^2(I)} = 1$ and $\norm{f_k}{L^2(I)} \to 0$. By Bolzano--Weierstrass we can assume (after taking a subsequence) that $s_k \to s\in U$. By the global estimate above we have that $(u_k)$ is bounded in $H^2(I)$, so after extracting a subsequence, we can assume $u_k$ converges weakly in $H^2(I)$ and strongly in $H^1(I)$ to some $u$ which solves $\mathcal{L}_s u = 0$ and satisfies $u(1)=0$, $\norm{u}{L^2(I)} = 1$. By estimate \eq{ACEst1}, we know that (again up to a subsequence) the limiting function $u$ satisfies 
\[
\norm{\p_x u|_{[0,\delta]}}{\sigma, 0, 0}+\snorm{\p_x u|_{[0,\delta]}}{\sigma, 1, 0}^0+\snorm{\p_x u|_{[0,\delta]}}{\sigma, 2, 0}^0 + \ssnorm{\p_x u}{\sigma}^0 <\infty.
\]
Moreover, by Theorem \ref{IntReg} applied to the function $\chi u$, where $\chi$ is a $(\sigma', 2)-$Gevrey regular cut-off function with $\chi(x) = 1$ for $x>\delta$ and $\chi(x) = 0$ for $x<\delta/2$, we know that $\chi u\in X^\sigma$. By the invertibility of $\mathcal{L}_s : X^\sigma \to Y^\sigma$, we conclude that $u=0$, contradicting $\norm{u}{L^2(I)} = 1$. We conclude that $\norm{u}{H^2(I)} \leq C_{\delta}  \norm{f}{L^2(I)}$ and the result follows on combining this with the global estimate.
\end{proof}
This immediately gives us
\begin{Corollary}\label{MECor}
Fix $\delta >0$ and $\sigma'>\sigma>0$. Suppose $\chi\in C^\infty(I)$ is a $(\sigma', 2)-$Gevrey regular cut-off function with $\chi(x) = 1$ for $x>2\delta$ and $\chi(x) = 0$ for $x<\delta$. Then $\mathcal{L}_s^{-1} \circ \chi:Y^\sigma \to \mathcal{D}^\sigma$ extends by continuity to a meromorphic family of operators, denoted with the same letter by an abuse of notation, $A_s: L^2(I) \to H^2(I)$ for $s \in \Omega_\sigma$. Poles of $A_s$ may occur only where $\mathcal{L}_s^{-1}$ has poles, however the degree  of each pole of $A_s$ may be less than the corresponding pole of $\mathcal{L}_s^{-1}$.
\end{Corollary}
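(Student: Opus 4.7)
The plan is to define $A_s$ first on the dense subspace $Y^\sigma$ of $L^2(I)$ using the meromorphy of $\mathcal{L}_s^{-1}$, to promote this to an $L^2 \to H^2$ operator on $\Omega_\sigma \setminus \mathcal{P}$ (where $\mathcal{P}$ denotes the pole set of $\mathcal{L}_s^{-1}$) via the preceding theorem, and finally to continue meromorphically across each pole by a maximum modulus argument.

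For the first step, note that for $f \in Y^\sigma$, Theorem \ref{ProdThm} gives $\chi f \in Y^\sigma$ (with support in $[\delta,1]$), so Proposition \ref{mainprop} produces $A_s f := \mathcal{L}_s^{-1}(\chi f) \in \mathcal{D}^\sigma$ meromorphically in $s$. Composing with the continuous embedding $\mathcal{D}^\sigma \subset X^\sigma \hookrightarrow H^2(I)$, I regard $A_s$ as a meromorphic family $Y^\sigma \to H^2(I)$ with poles contained in $\mathcal{P}$.

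For the second step, I invoke the preceding theorem applied to $\chi f$ to deduce that for each $s_0 \in \Omega_\sigma \setminus \mathcal{P}$ there is a neighbourhood $U \subset \Omega_\sigma \setminus \mathcal{P}$ and a constant $C$ depending on $U, \delta, W$ such that
\[
\norm{A_s f}{H^2(I)} \leq C \norm{\chi f}{L^2(I)} \leq C \norm{\chi}{L^\infty(I)} \norm{f}{L^2(I)}
\]
for all $f \in Y^\sigma$ and $s \in U$. Since $Y^\sigma$ contains the $(\sigma, 2)$-Gevrey bump functions, it is dense in $L^2(I)$, so $A_s$ extends by continuity to a bounded operator $L^2(I) \to H^2(I)$ on $\Omega_\sigma \setminus \mathcal{P}$, and locally uniform convergence of the holomorphic family on $Y^\sigma$ makes this extension holomorphic in $s$. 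For the third step, I fix a pole $s_* \in \mathcal{P}$ of order $k$ for $\mathcal{L}_s^{-1}$ and set $B(s) := (s-s_*)^k A_s$, which is holomorphic as a map $Y^\sigma \to H^2(I)$ on a small closed disk $\overline{D(s_*, r)} \subset \Omega_\sigma$ meeting $\mathcal{P}$ only at $s_*$. On the boundary circle, the second step supplies a uniform bound $\norm{B(s) f}{H^2} \leq r^k C(r) \norm{f}{L^2}$ for $f \in Y^\sigma$. Applying the maximum modulus principle to each Banach-valued holomorphic function $s \mapsto B(s)f \in H^2(I)$ propagates this bound to the full closed disk, so $B(s)$ extends by density to a bounded $L^2 \to H^2$ operator holomorphically on $D(s_*, r)$, and $A_s = (s-s_*)^{-k} B(s)$ is thereby meromorphic with pole of order at most $k$ at $s_*$, possibly strictly lower if the extensions of the principal-part coefficients to $L^2 \to H^2$ annihilate top-order terms.

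The main obstacle is establishing an $L^2$-side bound in the second step: a priori we only control $A_s$ in a Gevrey topology coming from Proposition \ref{mainprop}. The preceding theorem resolves exactly this, because $\chi f$ is supported away from $x=0$, precisely the region where the Gevrey condition in $Y^\sigma$ is essential; once the $L^2$ estimate is in hand, the remaining density and complex-analytic extension arguments are routine.
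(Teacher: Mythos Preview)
Your proposal is correct and follows essentially the same route as the paper: density of $Y^\sigma$ in $L^2(I)$ via Gevrey bump functions, the preceding theorem to obtain the $L^2\to H^2$ bound away from poles, and multiplication by $(s-s_*)^k$ to cross each pole. The only cosmetic differences are that the paper phrases holomorphicity of the extension through Morera's theorem (locally uniform limits of the holomorphic approximants $\mathcal{L}_s^{-1}(\chi f_k)$) rather than your explicit maximum modulus argument, and that one should take bump functions of class $(\sigma'',2)$ for some $\sigma''>\sigma$ to guarantee membership in $Y^\sigma$; neither point affects the substance.
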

\begin{proof}
Given $f \in L^2(I)$, we can find an approximating sequence $f_k \in Y^\sigma$ with $f_k \to f$ in $L^2(I)$ by mollifying with a $(\sigma', 2)-$Gevrey regular bump function. The previous theorem tells us that if $s$ is not a pole of $\mathcal{L}_s^{-1}$, then $\mathcal{L}_s^{-1}( \chi f_k)$ converges to a limit $A_sf\in H^2(I)$ as $k \to \infty$, locally uniformly in $s$. By Morrera's theorem we have that $A_s:L^2(I) \to H^2(I)$ is holomorphic away from poles of  $\mathcal{L}_s^{-1}$. If $s_0$ is a pole of degree $d$, then by considering $(s-s_0)^d \mathcal{L}_s^{-1}( \chi f_k)$ and again applying Morrera's theorem we have that  $(s-s_0)^dA_s:L^2(I) \to H^2(I)$ is holomorphic at $s=s_0$, and hence $A_s:L^2(I) \to H^2(I)$ has a pole of order at most $d$.
\end{proof}

\section{Connection to other definitions}

\subsection{Meromorphicity of the resolvent}
In this section we establish the results of Theorem \ref{Thm1} in the introduction. In particular, we relate the eigenvalue problem for the null $(t, x)$ coordinates discussed above with the meromorphic extension of the resolvent for the original $(\tau,r)$ coordinates. We recall the Laplace transformed operator in the original $(\tau, r)$ variables is given by:
\[
\hat{L}_sw := \frac{d^2w}{dr^2} - \left(\frac{s^2}{4} + V(r)\right)w.
\]
It follows from standard semi-group theory (or directly from the equation) that $\hat{L}_s:H^2(\R_{>1})\cap H^1_0(\R_{>1}) \to L^2(\R_{>1})$ is invertible, and moreover that the inverse $\hat{L}_s$ is holomorphic in $\Re(s)>0$. We wish to show that $\hat{L}_s^{-1}: L^2_{c}(\R_{>1}) \to H^2_{loc.}(\R_{>1})$ admits a meromorphic extension to a domain which enters the left half-plane. To do this, we relate $\hat{L}_s$ to $\mathcal{L}_s$.

\begin{Lemma}
Let $w, g: \R_{\geq 1} \to \C$. Define new functions $u := P_sw : (0, 1] \to \C$ and $f :=Q_s g:(0,1] \to \C$ by
\[
u(x) = e^{\frac{s}{2x}} w\left(\frac{1}{x} \right), \qquad f(x) = \frac{1}{x^2} e^{\frac{s}{2x}} g \left(\frac{1}{x} \right).
\]
Then:
\[
\hat{L}_s w = g
\]
if and only if
\[
\mathcal{L}_s u = f.
\]
Equivalently, we have that
\[
\hat{L}_s = Q_s^{-1} \circ \mathcal{L}_s \circ P_s.
\]
\end{Lemma}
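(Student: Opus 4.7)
The plan is a direct chain-rule computation: the map $P_s$ consists of an inversion $r \mapsto 1/x$ composed with conjugation by the exponential factor $e^{s/(2x)}$, and one checks that this pair of operations intertwines $\hat L_s$ with $\mathcal L_s$ up to the multiplicative factor $x^{-2}$ (which is exactly what $Q_s$ encodes). There is no analytic content — only bookkeeping with derivatives.

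First I would introduce $r = 1/x$ and record the change of variables formulas
\[
\frac{d}{dr} = -x^{2}\frac{d}{dx}, \qquad \frac{d^{2}}{dr^{2}} = x^{4}\frac{d^{2}}{dx^{2}} + 2x^{3}\frac{d}{dx}.
\]
Then, writing $w(r) = e^{-sr/2}\,u(1/r)$, I would apply $\hat L_s$ in the $r$ variable and pull out the exponential:
\[
\hat L_s w = e^{-sr/2}\Bigl[\tilde u''(r) - s\,\tilde u'(r) - V(r)\,\tilde u(r)\Bigr], \qquad \tilde u(r):=u(1/r),
\]
where the $sr$-terms arising from differentiating $e^{-sr/2}$ twice cancel against the $-s^{2}/4$ in $\hat L_s$ (this cancellation is the reason the exponential factor has the specific power $s/2$).

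Next I would convert $\tilde u'(r),\tilde u''(r)$ back into $x$-derivatives of $u$ via the formulas of the first step and collect terms. The result is
\[
e^{s/(2x)}\hat L_s w \;=\; x^{4}u''(x) + 2x^{3}u'(x) + s x^{2} u'(x) - V(1/x)\,u(x) \;=\; x^{2}\Bigl[(x^{2}u')' + s u' - W(x)u\Bigr],
\]
using $W(x) = x^{-2}V(1/x)$ in the last equality. Dividing by $x^{2}$ gives
\[
\mathcal L_s u \;=\; x^{-2} e^{s/(2x)} \hat L_s w.
\]
Hence $\hat L_s w = g$ is equivalent to $\mathcal L_s u = x^{-2} e^{s/(2x)} g(1/x) = f$, which is exactly $\mathcal L_s\circ P_s = Q_s\circ \hat L_s$, i.e., $\hat L_s = Q_s^{-1}\circ\mathcal L_s\circ P_s$. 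The only step that requires any care is tracking the two separate contributions to $\tilde u''$ (namely the $2r^{-3}u'(1/r)$ and $r^{-4}u''(1/r)$ pieces) and making sure their sum combines with the $-s\tilde u'$ term to produce exactly $(x^{2}u')' + s u'$ after the conjugation; there is no genuine obstacle beyond bookkeeping.
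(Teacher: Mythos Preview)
Your computation is correct and is precisely the ``straightforward computation'' the paper alludes to; the paper gives no further detail, so your approach is the same as (indeed, more explicit than) the paper's own proof.
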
\label{cov}
\begin{proof}
This is a straightforward computation.
\end{proof}
The crucial observation concerning the maps $P_s, Q_s$ is that they are well behaved away from $x=0$ (equivalently $r=\infty$). It is this singular behaviour which forces us to consider modified spaces when we wish to meromorphically continue $\hat{L}_s^{-1}$. We are now able to complete the proof of Theorem \ref{Thm1}.

\begin{proof}[Proof of Theorem \ref{Thm1}]
Suppose $V$ is a Type $II_{\sigma'}$ potential. For any $R>1$, we can pick $\chi_R \in C^\infty_c(\R_{\geq 1})$ such that $\chi_R(r) = 1$ for $x<R$ and $\chi(R)=0$ for $x>2R$. We define:
\[
A_{s, R} := \chi_R \circ P_s^{-1} \circ \mathcal{L}_s^{-1} \circ Q_s \circ \chi_R.
\]
We claim that for any $\sigma <\sigma'$, $A_{s, R} : L^2(\R_{>1}) \to H^2(\R_{>1})$ is well defined for $s \in \Omega_\sigma$, away from poles of $\mathcal{L}_s^{-1}$. To see this, we note that for $g \in  L^2(\R_{>0})$ we have that $\chi_R g$ is supported away from $r = \infty$, and it is easy to see that $Q_s (\chi_Rg) :(0, 1] \to \C$ belongs to $L^2(I)$, depends meromorphically on $s$ and moreover vanishes near $x=0$, since the map $Q_s$ is regular away from $r=\infty$. As a result, by Corollary \ref{MECor} $\mathcal{L}_s^{-1}(Q_s (\chi_Rg)) \in H^2(I)$ has a meromorphic extension to $\Omega_{\sigma}$. Finally, the map $\chi{R}\circ  P_s^{-1}$ maps $H^2(I)$ holomorphically into $H^2(\R_{>1})$ since the cut-off removes any issues with growth near $r=\infty$. We have thus shown that $\chi_R \circ \hat{L}_s^{-1} \circ \chi_R:L^2(\R_{>1}) \to H^2(\R_{>1})$ extends as a meromorphic family of operators to $\cup_{\sigma<\sigma'}\Omega_{\sigma}$. This is precisely the statement that $\hat{L}_s^{-1} :L^2_c(\R_{>1}) \to H^2_{loc.}(\R_{>1})$ extends as a meromorphic family of operators on the same domain. Finally, noting that the potentials considered in Theorem \ref{Thm1} give rise to transformed potentials $W$ which are polynomial, and hence $(\sigma', 2)-$Gevrey for all $\sigma'$, we conclude that $\hat{L}_s^{-1} :L^2_c(\R_{>1}) \to H^2_{loc.}(\R_{>1})$ extends meromorphically to $\cup_{\sigma}\Omega_{\sigma}$, which is a sector of opening angle $\theta > \frac{\pi}{2}$. Each pole corresponds to a finite dimensional space of eigenvalues of $\mathcal{L}_s$, which after applying the transformations gives a finite dimensional space of solutions to $\hat{L}_sw = 0$ which are outgoing in the sense that $w(r) = e^{-\frac{s}{2} r} u(r^{-1})$, where $u \in X^\sigma$ for each $\sigma$ such that $s \in \Omega_\sigma$.
\end{proof}
We have thus established that the scattering resonances are well defined, and moreover shown the scattering resonances to be quasinormal frequencies.

\subsection{The method of Leaver}
For practical computations, the method of Leaver \cite{Leaver, Leaver2} is often applied when seeking quasinormal frequencies. This method can be adapted to our setting as follows. We follow  the presentation of Ansorg and Macedo \cite{AnsMac}, the arguments are somewhat heuristic, but ultimately we shall arrive at a clean definition. We will assume for simplicity that $W(x)$ is a polynomial of order $p$ in $x$ (in fact it would suffice to assume $W$ is analytic with a large enough radius of convergence). Suppose $s$ is in the left half-plane. We can then develop the general solution to
\[
\mathcal{L}_s u = \frac{d}{dx}\left( x^2 \frac{du}{dx} \right) + s \frac{du}{dx} -Wu = 0
\]
satisfying the boundary condition at $x=1$ as a power series about $x=1$, i.e.
\ben{LeaverSeries}
u(x) = \sum_{k=1}^\infty H_k (1-x)^k.
\een
Standard results tell us that this series should converge on $(0,1]$. Leaver's approach is to consider the general asymptotics of the expansion coefficients $H_k =  (-1)^ku^{(k)}(1)/k!$. In view of the expected radius of convergence, we anticipate that $\abs{H_{k+1}/H_k} \to 1$ as $k \to \infty$. Inserting our power series ansatz, we find that $H_k$ should obey a recurrence relation:
\begin{align*}
0&= (k+2)(k+1) H_{k+2} - (k+1) H_{k+1} (2(k+1) + s) H_{k+1} + k(k+1)H_k \\&\qquad - \sum_{l=0}^p \frac{W^{(l)}(1)}{l!} (-1)^l H_{k-l}
\end{align*}
where we set $H_{k}=0$ for $k<1$ and $H_1=1$ to fix a scaling. We define $\rho_k := H_{k+1}/H_k$, and assuming that for sufficiently large $k$ we have $\abs{\rho_k-1}<\frac{1}{2}$ we can deduce:
\[
\left(1+\frac{2}{k} \right) \rho_{k+1} - \left(2+\frac{s+2}{k} \right)  + \frac{1}{\rho_k} = O(k^{-2})
\]
We find that this equation will be satisfied if $\rho_k$ has the asymptotic form:
\[
\rho_k = 1 \pm \sqrt{\frac{s}{k}} +\frac{\frac{s}{2} - \frac{3}{4}}{k} + O(k^{-\frac{3}{2}}).
\]
Here and below, for $\abs{\arg{z}}<\pi$ we define $\sqrt{z}$ to be the branch of the square-root satisfying $\Re{\sqrt{z}} >0$. Taking logarithms, we deduce that:
\[
\log H_{k+1} = \log H_{k}  \pm \sqrt{\frac{s}{k}} +\frac{\frac{s}{2} - \frac{3}{4}}{k} + O(k^{-\frac{3}{2}})
\]
Summing from some $k_0$, we deduce:
\[
\log H_{k+1} = \pm \sqrt{s} \sum_{j=k_0}^k\frac{1}{\sqrt{j}} - \frac{3}{4} \sum_{j=k_0}^k\frac{1}{j} + O(1) 
\]
Now, noting the asymptotic behaviour of the generalised harmonic series:
\[
\sum_{j=1}^k \frac{1}{\sqrt{j}} = 2 \sqrt{k} + O(1), \qquad \sum_{j=1}^k \frac{1}{j} = \log k + O(1)
\]
we conclude that for sufficiently large $k$ we have:
\[
H_k = k^{-\frac{3}{4}} \left(A^+_k e^{2 \sqrt{s k}} + A^-_k e^{-2 \sqrt{s k}}  \right),
\]
where $A^\pm_k$ converge to finite values as $k \to \infty$, which we denote $A^\pm_{\infty}$. Leaver argued that in order to identify the quasinormal frequencies, one should impose the condition that $A^+_\infty=0$, which implies that the series \eq{LeaverSeries} converges uniformly. Our discussion here has been somewhat heuristic, although it could in principle be made rigorous. Nevertheless it suffices for us to motivate the following definition:
\begin{Definition}
We say that $s$ with $\Re(s)<0$ is a quasinormal frequency in the sense of Leaver if there exists a solution $u$ to the homogeneous equation $\mathcal{L}_s u = 0$ which is of the form \eq{LeaverSeries} with $H_k$ satisfying
\[
\sup_{k} \abs{H_k e^{2 \sqrt{s k}}} < \infty.
\]
\end{Definition}

Leaver's method of continued fractions gives a computational approach to find $s$ such that the corresponding $H_k$ satisfy the Leaver condition. Crudely, one solves the difference equation for $H_k$ `backwards from infinity' through a continued fraction, and then imposes the condition that $H_0=0$. This gives a formal equation involving a function of $s$ defined through continued fractions whose roots one seeks. It is not a priori clear that this function is well defined on a reasonable subset of $\C$, nor that its roots are discrete. Even assuming these facts, it's unclear how the spectrum so obtained relates to the original time evolution problem.

We shall show that if $s$ is a quasinormal frequency in the sense of Leaver, then it is also a quasinormal frequency in the sense we have introduced above. In particular this shows that the Leaver QNFs are discrete, and it furthermore connects them directly to the original evolution problem. Hitherto we are not aware of any work which establishes that the Leaver definition defines discrete frequencies, nor of a justification for why these frequencies should be relevant for an evolution problem.

\begin{Theorem}\label{LeaverThm}
Fix $\sigma>0$ and suppose that $s$ is a quasinormal frequency in the sense of Leaver with $\sigma < \Re(\sqrt{s})^2$. Then the corresponding power series solution $u$ belongs to $X^\sigma$, and hence if $s \in \Omega_{\sigma}$ it is a quasinormal frequency in the sense of Proposition \ref{mainprop}.
\end{Theorem}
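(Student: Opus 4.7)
The plan is to use the Leaver decay of the coefficients $H_k$ to show directly that the derivatives of $u(x)=\sum_{k\geq 1} H_k(1-x)^k$ satisfy the Gevrey-$2$ bounds that define the norm on $X^\sigma$, and then invoke Proposition \ref{mainprop}.

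First I would record the standing estimate. The Leaver hypothesis is equivalent to $|H_k|\leq C e^{-2\alpha \sqrt{k}}$, where $\alpha := \Re(\sqrt{s})$ denotes the real part taken in the principal branch; the hypothesis $\sigma<\alpha^2$ forces $\alpha>0$, so this is genuine exponential decay. Because $\sum_k k^m e^{-2\alpha\sqrt{k}}$ converges for every $m$, the Leaver series may be differentiated term-by-term on all of $\overline{I}$, giving
\[
\p_x^m u(x)=(-1)^m\sum_{k\geq m}\frac{k!}{(k-m)!}H_k(1-x)^{k-m}
\]
and hence, using $(1-x)^{k-m}\leq 1$ and $k!/(k-m)!\leq k^m$,
\[
|\p_x^m u(x)| \;\leq\; C\,\Sigma_m,\qquad \Sigma_m:=\sum_{k\geq 1}k^m e^{-2\alpha\sqrt{k}},
\]
uniformly in $x\in[0,1]$; the same bound holds for $|\p_x^m u(0)|$.

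The key quantitative step is to show $\Sigma_m \lesssim \sqrt{m}\,(m!)^2/\alpha^{2m}$. I would proceed by comparison with the integral, using the substitution $u=2\alpha\sqrt{k}$:
\[
\int_0^\infty k^m e^{-2\alpha\sqrt{k}}\,dk \;=\; \frac{(2m+1)!}{2(2\alpha)^{2m+2}}.
\]
Stirling's formula combined with $\binom{2m}{m}\sim 4^m/\sqrt{\pi m}$ converts $(2m+1)!$ into $4^m(m!)^2$ times a $\sqrt{m}$ factor; the $4^m$ cancels $(2\alpha)^{2m}$ and one is left with $\Sigma_m\lesssim \sqrt{m}\,(m!)^2/\alpha^{2m}$ (the discrete sum is comparable to the integral since the integrand is unimodal, peaking near $k_\ast=m^2/\alpha^2$; one may alternatively argue by a direct Laplace method around $k_\ast$). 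Combined with the previous step,
\[
|\p_x^m u(x)|,\; |\p_x^m u(0)| \;\leq\; C\,\sqrt{m}\,(m!)^2/\alpha^{2m}\qquad \text{for all }x\in I.
\]

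Plugging this pointwise bound into each of the four seminorms making up $\|\cdot\|_{X^\sigma}$, the general summand reduces, after using $\frac{((n+1)!)^4}{n!^2(n+1)!^2}=(n+1)^2$, to an expression of the form $C_{\sigma,\alpha}(n+1)^{a}\,(\sigma/\alpha^2)^{2n}$ with $a$ a small integer depending on the seminorm (the weights $(x/\sigma)^k$ contribute only bounded factors after integrating over $[0,1]$, and the extra powers $n^{k+l}$ and the boundary factor $\sigma^{2n+1}$ raise $a$ by at most a constant amount). Since $\sigma/\alpha^2<1$ by hypothesis, every such series converges geometrically, and the same computation at $x=0$ controls $\ssnorm{\p_x u}{\sigma}^0$. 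Hence $u\in X^\sigma$, and because $\mathcal{L}_s u=0$ with $u(1)=0$, we have $u\in\ker(\mathcal{L}_s:\mathcal{D}^\sigma\to Y^\sigma)$. If in addition $s\in\Omega_\sigma$, Proposition \ref{mainprop} identifies $s$ as a quasinormal frequency in our sense.

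The main obstacle is the sharp combinatorial bound on $\Sigma_m$: one needs the $(m!)^2/\alpha^{2m}$ factor with the right $\alpha$, since this is exactly what the threshold $\sigma<\alpha^2$ matches. The polynomial prefactor $\sqrt{m}$ is irrelevant because the inequality is strict, which gives geometric room to absorb any power of $m$; this is also why it suffices to use the crude bound $k!/(k-m)!\leq k^m$ rather than a sharper combinatorial identity.
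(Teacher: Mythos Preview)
Your proof is correct and follows the same overall architecture as the paper: bound $|u^{(m)}(x)|$ uniformly by $C\Sigma_m$ with $\Sigma_m=\sum_k k^m e^{-2\alpha\sqrt k}$, estimate $\Sigma_m$, and plug into the $X^\sigma$ norm. The only real difference is in how $\Sigma_m$ is estimated. The paper introduces the holomorphic function $g(\lambda)=\sum_{k\geq 1} e^{-\lambda\sqrt k}$ on $\{\Re\lambda>0\}$, observes that $\Sigma_m=g^{(2m)}(2\alpha)$, and applies Cauchy's estimate in a disc of radius $2\theta\alpha$ to obtain $\Sigma_m\leq C_\theta(2\theta\alpha)^{-2m}(2m)!$; the parameter $\theta<1$ is chosen close to $1$ at the end to match the condition $\sigma<\alpha^2$. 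Your route via direct comparison with the integral $\int_0^\infty k^m e^{-2\alpha\sqrt k}\,dk$ and Stirling is more elementary and avoids the auxiliary complex-analytic step, at the cost of having to justify that the discrete sum and the integral are comparable (your unimodality remark suffices). Both methods produce $\Sigma_m\lesssim (m!)^2/\alpha^{2m}$ up to polynomial factors in $m$, which is exactly what the threshold $\sigma<\alpha^2$ requires. (A harmless slip: your integral evaluates to $\tfrac{2(2m+1)!}{(2\alpha)^{2m+2}}$ rather than $\tfrac{(2m+1)!}{2(2\alpha)^{2m+2}}$; the factor of $4$ is absorbed into the constant.)
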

\begin{proof}
We are required to show that if
\[
u(x) = \sum_{k=1}^\infty H_k (1-x)^k.
\]
with $H_k$ satisfying:
\[
\abs{H_k} \leq C e^{-2 \Re(\sqrt{s}) \sqrt{k}}
\]
for all $k$ then $u \in X^\sigma$. Differentiating the power series we have:
\[
u^{(n)}(x) = \sum_{k=1}^\infty H_k \frac{k!}{(k-n)!} (1-x)^{k-n}.
\] 
We see that this converges uniformly on $(0, 1]$, and so we deduce that $u \in C^\infty(\overline{I})$ and moreover:
\[
\abs{u^{(n)}(x)} \leq \sum_{k=1}^\infty k^n \abs{H_k}.
\]

Now, let us consider the series
\[
g(\lambda) := \sum_{k=1}^\infty e^{-\lambda \sqrt{k}}
\]
This defines a holomorphic function for $\lambda \in \{\Re{z}>0\}$ and as a consequence we have the estimate:
\[
\abs{g^{(n)}(\lambda)} \leq C_{\theta} (\theta \Re(\lambda))^{-n} n!
\]
for any $\theta \in (0,1)$. Differentiating the series directly, we have that
\[
g^{(2n)}(\lambda) =  \sum_{k=1}^\infty k^n e^{-\lambda \sqrt{k}}
\]
so we consequently deduce
\[
\abs{\sum_{k=1}^\infty k^n e^{-\lambda \sqrt{k}}} \leq C (\theta \Re(\lambda))^{-2n} (2n)!
\]

Taking $\lambda = 2 \Re(\sqrt{s})$, we find:
\[
\abs{u^{(n)}(x)} \leq \sum_{k=1}^\infty k^n \abs{H_k} \leq \sum_{k=1}^\infty k^n e^{-2 \Re(\sqrt{s}) \sqrt{k}} \leq C_{\theta} (2 \theta \Re(\sqrt{s}))^{-2n} (2n)!.
\]
Recalling the definition of the $X^\sigma$ norm, we see that this estimate implies $u \in X^\sigma$, provided:
\[
\sum_{n=1}^\infty n^2 \sigma^{2n} \frac{(2 \theta \Re(\sqrt{s}))^{-4(n+1)} (2(n+1))!^2}{n!^2 (n+1)!^2} <\infty
\]
Standard estimates for the central binomial coefficient tell us that:
\[
\frac{(2(n+1))!^2}{(n+1)!^4} \leq 4^{2(n+1)}
\]
so we see that provided
\[
\sigma < \Re(\sqrt{s})^2
\]
we can find a $\theta$ such that the sum converges.
\end{proof}
\begin{Lemma}
There exists $\varphi_1>\frac{\pi}{2}$ such that for any $s$ with $\frac{\pi}{2}< \abs{\arg s}<\varphi_1$ we can find $\sigma$ with $s \in \Omega_\sigma$ and  $\sigma < \Re(\sqrt{s})^2$.
\end{Lemma}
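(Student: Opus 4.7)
The plan is to exploit scale invariance and then run a short continuity argument anchored at $s = i$. First I would note that each of the inequalities defining $\Omega_\sigma^1, \Omega_\sigma^2, \Omega_\sigma^3$ is preserved under the simultaneous rescaling $(s, \sigma) \mapsto (\lambda s, \lambda \sigma)$ for $\lambda > 0$ (each inequality is manifestly homogeneous of either degree $1$ or degree $2$ in $(s,\sigma)$); likewise, since $\sqrt{\lambda s} = \sqrt{\lambda}\,\sqrt{s}$ for $\lambda > 0$, the Leaver condition $\sigma < \Re(\sqrt s)^2$ scales in exactly the same way. Using in addition the obvious symmetry $s \mapsto \bar s$, it therefore suffices to produce some $\varphi_1 > \pi/2$ and, for each $\theta \in (\pi/2, \varphi_1)$, a value $\sigma > 0$ such that $e^{i\theta} \in \Omega_\sigma$ and $\sigma < \cos^2(\theta/2)$; the general case then follows by scaling with $\sigma := \abs{s}\sigma_\star$ for $\sigma_\star$ the one found on the unit circle.

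The second step is to check the anchor point $\theta_0 = \pi/2$, i.e.\ $s_0 = i$. Here $\Re s_0 = 0$, $|\Im s_0| = 1$, and $\Re(\sqrt{s_0})^2 = \cos^2(\pi/4) = 1/2$. A direct substitution into the definitions reveals that $\Omega^1_\sigma$ is simply $\sigma < 1$, $\Omega^2_\sigma$ reduces to $0 < \sigma < 1$, while $\Omega^3_\sigma$ becomes $\sigma(1-\sigma) - \sigma^2 > 0$, i.e.\ $\sigma < 1/2$. Consequently, any $\sigma_\star \in (0, 1/2)$ simultaneously meets $s_0 \in \Omega_{\sigma_\star}$ and $\sigma_\star < \Re(\sqrt{s_0})^2$; I would fix, for concreteness, $\sigma_\star = 1/4$.

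The final step is a soft continuity argument. All defining inequalities for $\Omega_{\sigma_\star}$, together with $\sigma_\star < \Re(\sqrt s)^2$ (viewed as a function of $s$ on $\C \setminus (-\infty, 0]$ with the principal branch of $\sqrt{\,\cdot\,}$), are strict and continuous at $s = i$. They therefore hold on an open neighbourhood of $s = i$; choosing $\varphi_1 > \pi/2$ so small that the arc $\{e^{i\theta} : \pi/2 \le \theta \le \varphi_1\}$ lies inside this neighbourhood, the scaling observation of the first step delivers the claim for all $s$ with $\arg s \in (\pi/2, \varphi_1)$, and conjugation handles $\arg s \in (-\varphi_1, -\pi/2)$.

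I do not expect a serious obstacle. The only point to watch is that at the critical value $s = i$ the Leaver constraint $\sigma < 1/2$ exactly coincides with the $\Omega^3_\sigma$ constraint, so the argument would fail if we sought to reach $\theta = \pi/2$ with equality; however, \emph{strict} inequalities propagate by continuity into $\theta > \pi/2$, which is precisely what the statement asks for. If one wished to determine the optimal $\varphi_1$, one would need to compare the boundary curve of $\Omega_\sigma$ (arising from $\Omega^3_\sigma$ in this regime) with the curve $\sigma = \Re(\sqrt s)^2$ and solve a transcendental equation in $\arg s$, analogous to the quartic trigonometric equation appearing in the previous lemma; but for mere existence of some $\varphi_1 > \pi/2$ the continuity argument above is sufficient.
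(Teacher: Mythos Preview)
Your argument is correct. It differs from the paper's in that you give a soft existence proof via continuity at $s=i$ with a fixed $\sigma_\star=\tfrac14$, whereas the paper actually determines $\varphi_1$ explicitly as the root in $(\tfrac{\pi}{2},\pi)$ of $4\cos^4(\varphi/2)+\cos\varphi=0$ (numerically $\varphi_1\simeq 0.688\pi$). The paper works only with $\Omega^1_\sigma\cap\Omega^2_\sigma$: writing $s=e^{i\varphi}$ with $\varphi\in(\tfrac{\pi}{2},\pi)$, the lower bound from $\Omega^2_\sigma$ is $\sigma>\dfrac{-\cos\varphi}{2(1+\cos\varphi)}$, the Leaver upper bound is $\sigma<\cos^2(\varphi/2)=\tfrac12(1+\cos\varphi)$, and compatibility of the two is precisely the displayed equation. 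Your scaling reduction and conjugation symmetry are exactly right and are also implicit in the paper's approach. What the paper's computation buys is a concrete value of $\varphi_1$; what yours buys is brevity, and you correctly anticipate that the optimal angle would require solving a transcendental equation analogous to the one in the preceding lemma. One small remark: your aside that the $\Omega^3_\sigma$ constraint coincides with the Leaver constraint at $s=i$ is true but not needed, since already $\Omega^2_\sigma$ gives $\sigma<1$ there, leaving ample room below the Leaver bound $\sigma<\tfrac12$.
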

\begin{proof}
This follows from the definition of $\Omega_\sigma$. In particular, if $\abs{\arg s}<\varphi_1$ where $\varphi_1$ is the solution of
\[
4\cos^4 \frac{\varphi}{2} + \cos \varphi=0
\]
with $\varphi_1 \in (\frac{\pi}{2}, \pi)$ then there exists $\sigma$ such that $\sigma < \Re(\sqrt{s})^2$ and $s \in \Omega^1_\sigma\cap \Omega^2_\sigma$. Numerically, we find $\varphi_1 \simeq 0.688 \pi$.
\end{proof}

Combining the two results above, we deduce that in the sector $\abs{\arg{s}}<\varphi_0$ the Leaver QNF are guaranteed to be QNF in the sense we have introduced above. Note that we do not assert that all QNF in the sense of Proposition \ref{mainprop} are Leaver QNF, nor even that Leaver QNF necessarily exist. In light of several decades of numerical computation using Leaver's method, it seems reasonable to take this as an empirical fact however.
\appendix
\numberwithin{equation}{section}
\section{Gevrey estimates for $e^{\frac{s}{x}}$}
\renewcommand{\theequation}{A\arabic{equation}}

\begin{Lemma}\label{expLem}
Suppose $\Re(s)<0$. Let $w(x, s)$ be the function given by:
\be
w(x,s) = e^{\frac{s}{x}}
\ee
for $x>0$ and $w(0,s) = 0$. Then for any $\sigma > -\Re(s)$
\be
\sup_{k\in \N}  \sup_{x\in [0,\infty)} \frac{\sigma^k}{(k!)^2} \abs{\frac{d^k}{dx^k}  w(x,s)} = \infty.
\ee
\end{Lemma}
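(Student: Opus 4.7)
The plan is to obtain explicit lower bounds on $|w^{(n)}(x_n)|$ along a sequence $x_n \to 0^+$ and to show that, weighted by $\sigma^n/(n!)^2$, these diverge. It suffices to treat $s = -a$ with $a > 0$ real; the complex case proceeds analogously since only $\Re s$ controls the decay of $|e^{s/x_n}|$. First, a direct induction in $n$ establishes the identity
\[
w^{(n)}(x) = \frac{(-1)^n e^{s/x}}{x^n}\, Q_n(s/x), \qquad Q_n(u) := \sum_{k=1}^n \binom{n-1}{k-1}\frac{n!}{k!}\, u^k,
\]
together with the generating function $\sum_{n \geq 0} Q_n(u)\, z^n/n! = e^{uz/(1-z)}$ (with $Q_0 := 1$). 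Cauchy's formula then provides the integral representation $Q_n(u)/n! = \frac{1}{2\pi i}\oint_{|z|=r} e^{uz/(1-z)}\, z^{-n-1}\,dz$ for any $r \in (0,1)$, which is amenable to saddle-point analysis.

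Fix $M > 4$ (e.g.\ $M = 5$) and take $x_n := a/(Mn)$, so that $s/x_n = -Mn$ and $|e^{s/x_n}| = e^{-Mn}$. The phase $\psi(z) := -Mz/(1-z) - \log z$ then has two real negative saddles $z^\star_\pm = \tfrac12[-(M-2)\pm \sqrt{(M-2)^2 - 4}]$, the dominant one being the farther saddle $z^\star := z^\star_-$. Deforming the Cauchy contour to pass through $z^\star$ in the vertical (steepest-descent) direction --- valid because $\psi''(z^\star) > 0$ --- the standard Gaussian saddle-point method will give
\[
\frac{Q_n(-Mn)}{n!} = \frac{(-1)^{n+1}\, e^{n\,\Re \psi(z^\star)}}{|z^\star|\sqrt{2\pi n\,\psi''(z^\star)}}\,\bigl(1+o(1)\bigr),
\]
where the $(-1)^n$ comes cleanly from $\log z^\star = \log|z^\star| + i\pi$. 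Hence $|Q_n(-Mn)| \gtrsim n!\, e^{n\Re\psi(z^\star)}/\sqrt n$, with no oscillation argument required --- the complex-conjugate-saddle complications that appear when $M < 4$ are thereby avoided.

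Combining with Stirling then yields
\[
\frac{\sigma^n}{(n!)^2}\,|w^{(n)}(x_n)| \;\gtrsim\; \Bigl(\frac{M\sigma\, e^{1-M+\Re\psi(z^\star)}}{a}\Bigr)^{\!n}\frac{1}{n}\,.
\]
For $M = 5$ a short numerical computation gives $Me^{1-M+\Re\psi(z^\star)} \approx 1.30$, so the weighted norm diverges as soon as $\sigma > a/1.30 \approx 0.77\,a$. In particular any $\sigma > a = -\Re(s)$ suffices, proving the lemma. The principal obstacle will be the saddle-point lower bound itself: one must justify deforming the original small circle around $0$ to a contour passing through $z^\star$ (which lies outside the unit disk when $M > 4$) in the steepest-descent direction, while carefully tracking the branch of $\log z$ and verifying non-vanishing of the resulting leading constant. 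This is standard complex analysis but is the only step that requires genuine care.
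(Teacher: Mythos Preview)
Your saddle-point argument has a genuine error. You assert that the farther saddle $z^\star_-$ is the dominant one and that $\psi''(z^\star_-)>0$, so that a vertical steepest-descent contour through $z^\star_-$ can be used. Both claims are false. A direct computation gives
\[
\psi''(z^\star)=\frac{z^\star+1}{(z^\star)^2(1-z^\star)},
\]
which is \emph{positive} at the nearer saddle $z^\star_+\in(-1,0)$ and \emph{negative} at $z^\star_-<-1$. Thus the steepest-descent direction at $z^\star_-$ is along the real axis, not vertical, and no closed contour around $0$ can realise $z^\star_-$ as its $\Re\psi$-maximum. More decisively, taking $r=|z^\star_+|<1$ in the Cauchy formula gives the trivial upper bound $|Q_n(-Mn)|/n!\le e^{n\Re\psi(z^\star_+)}$, which directly contradicts your claimed lower bound $\gtrsim e^{n\Re\psi(z^\star_-)}/\sqrt n$, since $\Re\psi(z^\star_-)>\Re\psi(z^\star_+)$. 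The correct asymptotics are governed by the nearer saddle $z^\star_+$; with your choice $M=5$ this yields $Me^{1-M+\Re\psi(z^\star_+)}\approx 0.956<1$, which only gives divergence for $\sigma>a/0.956>a$ --- not enough for the lemma as stated.

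The approach is salvageable: one checks that $Me^{1-M+\Re\psi(z^\star_+)}=(y+1)^2e^{-y}$ where $y=1/|z^\star_+|>1$, and this exceeds $1$ for $y\in(1,2.51\ldots)$, i.e.\ for $M$ slightly larger than $4$ (for instance $M=4.5$ gives $\approx 1.22$). With such $M$ and the correct saddle $z^\star_+$ the scheme would work, at least for real $s$; your reduction of the complex case to the real one also needs justification, since $Q_n(s/x_n)$ then involves a genuinely complex argument. By contrast, the paper's proof avoids all of this: it argues by contradiction, applies Taylor's theorem at $x=0$ (where all derivatives of $w$ vanish) to obtain $|w(x)|\le 2C\,k!\,(x/\sigma)^k$ for every $k$, and evaluates at $x=\sigma/(k+1)$ to force $e^{(1+\Re(s)/\sigma)k}\lesssim k+1$, a contradiction when $\sigma>-\Re(s)$. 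This is a ten-line elementary argument requiring no complex analysis.
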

\begin{proof}
Fix $\sigma > -\Re(s)$ and suppose the result fails. Then there exists a uniform constant $C>0$ such that:
\ben{contest}
 \sup_{x\in [0,\infty)} \frac{\sigma^k}{(k!)^2} \abs{\frac{d^k}{dx^k}  w(x,s)} \leq C.
\een
Note that by a standard computation $\frac{d^kw }{dx^k}  (0,s) = 0$. We write $w(x,s) = a(x,s) + i b(x,s)$ for real valued functions $a, b$.

Applying Taylor's theorem at $x=0$ we deduce that for any $k \in \N$
\be
a(x,s) = \frac{x^k}{k!} \frac{d^ka }{dx^k}  (\xi,s) = \frac{x^k}{2 k!} \left(\frac{d^kw }{dx^k}  (\xi,s) +  \frac{d^k \overline{w} }{dx^k}  (\xi,s) \right)
\ee
for some $\xi \in (0, x)$. Making use of \eq{contest} we deduce:
\be
\abs{a(x,s)} \leq C x^k k! \sigma^{-k},
\ee
for all $k$ and $x$ and a similar result holds for $b$. We deduce that:
\be
\abs{w(x,s)} \leq 2C x^k k! \sigma^{-k}.
\ee 
Now, a standard estimate tells us that:
\be
k! \leq (k+1)^{k+1} e^{-k},
\ee 
so that:
\be
\abs{w(x,s)} \leq 2C (k+1)^{k+1} e^{-k} \left(\frac{x}{ \sigma} \right) ^{k},
\ee
for all $x, k$. Setting $x = \frac{\sigma}{k+1}$, we have:
\be
\abs{w\left (\frac{\sigma}{k},s \right)} = e^{ \frac{\Re(s)}{\sigma} k } \leq C (k+1) e^{-k}
\ee
so rearranging we find:
\be
e^{\left (1 + \frac{\Re(s)}{\sigma}\right) k} \leq 2 C (k+1),
\ee
for all $k$, which is a contradiction since by assumption $1 + \frac{\Re(s)}{\sigma} >0$. Thus we must have:
\be
\sup_{k\in \N}  \sup_{x\in [0,\infty)} \frac{\sigma^k}{(k!)^2} \abs{\frac{d^k}{dx^k}  w(x,s)} = \infty.
\ee
\end{proof}

\subsection*{Data availability statement} Data sharing is not applicable to this article as no new data were created or analyzed in this study.

\providecommand{\href}[2]{#2}\begingroup\raggedright\endgroup

\end{document}